\numberwithin{equation}{section}
\newcommand{\dd}{\,\mathrm{d}}
\renewcommand{\scr}[1]{{\mathcal #1}}
\newcommand{\bb}[1]{{\mathbb #1}}
\newcommand{\Bm}{\begin{pmatrix}}
	\newcommand{\Em}{\end{pmatrix}}
\newcommand{\T}{{\prime}}
\newcommand{\der}[2]{ \ifthenelse{\isempty{#1}}{\frac{\dd}{\dd #2}}{\frac{\dd #1}{\dd #2}} }
\newcommand{\pder}[2]{ \ifthenelse{\isempty{#1}}{\frac{\partial}{\partial #2}}{\frac{\partial #1}{\partial #2}} }
\newcommand{\ind}{\mathbf{1}}
\theoremstyle{definition}
\newtheorem{thm}{Theorem}[section]
\newtheorem{prop}[thm]{Proposition}
\newtheorem{lem}[thm]{Lemma}
\newtheorem{cor}[thm]{Corollary}
\newtheorem{rem}[thm]{Remark}
\newtheorem{ex}[thm]{Example}
\newtheorem{defn}[thm]{Definition}
\newtheorem{ass}[thm]{Assumption}
\newcommand{\norm}[1]{\ensuremath{\left\lVert #1\right\rVert}}
\newcommand{\abs}[1]{\ensuremath{\left\lvert #1 \right\rvert}}
\newcommand{\inner}[2]{\ensuremath{\left\langle #1 , #2 \right\rangle }}
\newcommand{\tr}[1]{\ensuremath{\mathrm{tr}\left( #1 \right) }}
\newcommand{\rank}[1]{\ensuremath{ \mathrm{rank}\left( #1 \right) }}
\newcommand{\PP}{\mathbb{P}}
\title{%
	Conditioning continuous-time Markov processes by guiding}
\date{\today}
\author{Marc Corstanje$^1$ \quad Frank van der Meulen$^1$ \quad Moritz Schauer$^2$}
\address{%
	$^1$Delft University of Technology, NL\\
	$^2$Gothenburg University and Chalmers University of Technology, SE 
	}
\begin{document}
	\maketitle
	\let\thefootnote\relax\footnotetext{Contact: \href{mailto:M.A.Corstanje@tudelft.nl}{M.A.Corstanje@tudelft.nl}}
	\begin{abstract}
		
		A continuous-time Markov process $X$ can be conditioned  to be in a given state at a fixed time $T > 0$ using Doob's $h$-transform. This transform requires the typically intractable transition density of $X$. The effect of the $h$-transform can be described as introducing a guiding force on the process. Replacing this  force with an approximation defines the wider class of guided processes. For certain approximations the law of a guided process approximates -- and is equivalent to -- the actual conditional distribution,  with tractable likelihood-ratio. The main contribution of this paper is to prove that the principle of a guided process, introduced in \cite{schauer2017} for stochastic differential equations, can be extended to a more general class of Markov processes. In particular we apply the guiding technique to jump processes in discrete state spaces. The Markov process perspective 
		enables us to improve upon existing results for  hypo-elliptic diffusions. \\
		
		\noindent \textbf{Keywords: Markov processes, jump processes, Doob's $h$-transform, conditional process, landmark dynamics, diffusions, guided process}
	\end{abstract}

	\section{Introduction}
	\label{sec:Introduction}
	
	\subsection{Problem description and motivation}
	\label{subsec:Introduction-Background}
	
	Continuous-time Markov processes are widely used for modelling phenomena that evolve over time. Examples include Brownian motion, the Poisson process, L\'evy processes in general,  and diffusion processes generated by a stochastic differential equation. 
	Many applications require sampling corresponding bridge processes, that is sampling the Markov process $X$ conditional on the value of its trajectory at some time $T$. For example in the statistical context, with  $X_t$  the state of the process at time $t$, the process is typically only observed at times $t_0<t_1<\cdots <t_n$. Based on these observations one may be interested in estimating a parameter $\theta$ appearing in the forward description of the process $X$. In this setting, likelihood based inference is difficult if the transition densities of the process are intractable. However, if the process were observed continuously rather than discretely, then likelihood computations for the continuously observed process on $[0,t_n]$ would typically be easier. This observation has led many authors to employ a data-augmentation scheme, where one samples iteratively {\it (i)}  $(X_t, t\in [0,t_n])$ conditional on $\{X_{t_i}\}_{i=0}^n$ and $\theta$; {\it (ii)} $\theta$ conditional on $(X_t, t\in [0,t_n])$.
	%, assuming the second step is tractable. 
	Clearly, step {\it (i)} requires a way to sample $(X_t,\, t\in (t_{i-1}, t_i))$ conditional on $X_{t_{i-1}}$ and $X_{t_i}$, i.e.\ a {\it bridge} process. More generally, both data augmentation approaches and approaches based on particle samplers naturally lead to the problem of sampling a bridge process.  Whereas for Brownian motion the corresponding bridge process, the Brownian bridge, is fully tractable, for general continuous-time Markov processes this is not the case. It is the aim of this paper to provide a general framework for this.
	% and related problems. 
	Without loss of generality, we can state the problem as simulating $(X_t,\, t\in [0,T])$ conditional on $(X_0, X_T)=(x_0, x_T)$. When feasible, we sometimes consider generalisations of this setting, such as conditioning on $(X_0,  L X_T)=(x_0, v)$ for a given matrix $L$. 
	
	\subsection{Approach: conditioning by guiding}
	Our approach builds upon earlier work in the specific setting where $X$ is a diffusion process generated by a stochastic differential equation (SDE). That is, $X$ satisfies the equation
	\[ \dd X_t = b(t, X_t) \dd t + \sigma(t,X_t) \dd W_t,\qquad X_0 = x_0. \]
	In this case the problem of bridge sampling has attracted much attention over the past two decades. The approach that we adopt here consists of {\it guiding}, the terminology originating from \cite{papaspiliopoulos2012importance}, the underlying ideas going back to \cite{clark1990, delyon2006}. Guiding refers to adjusting the dynamics of the process $X$ to ensure that it hits $x_T$ at time $T$. This can be done in multiple ways. Following \cite{clark1990}, \cite{delyon2006} proposed to superimpose the drift of a Brownian bridge to the original drift, leading to the process
	\[ \dd X^\circ_t = b(t, X^\circ_t) \dd t + \frac{x_T - X_t^\circ}{T-t} \dd t + \sigma(t,X^\circ_t) \dd W_t,\qquad X_0 = x_0. \]
	If $\sigma\sigma^\T$ is strictly positive definite, then indeed $X^\circ_T = x_T$, provided certain smoothness and boundedness conditions on $b$ and $\sigma$ are satisfied. Equally importantly, they derived an expression for the Radon-Nikodym derivative of the law of the conditioned process, $\PP^\star$, with respect to  the law of $X^\circ = (X^\circ_t,\, t\in [0,T])$, which we denote by $\PP^\circ$. If $\PP^\star_t$ and $\PP^\circ_t$ denote the restrictions of $\PP^\star$ and $\PP^\circ$ to $[0,t]$, respectively, then proving $\PP^\star_t \ll \PP^\circ_t$ is relatively easy, but the limiting operation $t\uparrow T$ requires careful arguments. 
	
	\cite{schauer2017, bierkens2018simulation} considered  different, more flexible, guiding terms which can also handle hypo-elliptic diffusions. A major effort in these papers consists of formulating sufficient conditions that justify taking the limit $t\uparrow T$. 
	For diffusions on manifolds, introducing guiding terms has recently been introduced in \cite{bui2021inference, arnaudon2019geometric}. While these works contain numerically convincing results of absolute continuity, no proof is given. Recently \cite{jensen2021simulation}  proved that the approach of \cite{delyon2006} can be extended to   simulating Brownian Bridges on Riemannian manifolds. 
	
	\subsection{Contribution}
	A first contribution of this paper is to define guided processes by means of an exponential  change of measure, rigorously studied in \cite{palmowski2002}. The beauty of this approach is that it is not restricted to diffusion processes, but applies generally to  continuous-time Markov processes, including for example jump processes. Our main result, Theorem \ref{thm:MainResult}, gives a simple expression for $\dd \PP^\star / \dd \PP^\circ$ and states sufficient conditions to justify the aforementioned limiting operation $t\uparrow T$. These conditions are designed to facilitate this operation in specific examples. We first illustrate the power of our approach to non-homogeneous jump processes and a continuous-time process evolving over a Delaunay triangulation.  Secondly, we apply this to diffusion processes, thereby lifting restrictions on the dynamics of the process from \cite{bierkens2018simulation} which then enable us to fully theoretically justify the bridge simulations in \cite{arnaudon2021}. Some technical proofs  are gathered in the appendix.

	\subsection{Outline} 
	\label{subsec:Introduction-Outline}
	
	We start this paper by stating the general setting and briefly describing Doob's $h$-transform. We will also use this section to introduce guided processes that are similar to the guided proposals presented in \cite{schauer2017, meulen2018Bayesian, bierkens2018simulation} for SDEs. In Section \ref{sec:ConditionsAndProof} we formulate conditions and prove equivalence between the tractable guided process and the intractable true conditional process. In \Cref{sec:ChemicalReactions} and \Cref{sec:ConditionalSDEs} we apply the theory to Markov processes in a discrete state space and Markov processes that arise as solutions to SDEs. 
	
	\subsection{Frequently used notation}
	\label{subsec:Introduction-Notation}
	The transpose of a matrix $A$ is denoted by $A^\T$. 	We denote the smallest and largest eigenvalue of a square matrix $A$ by $λ_{\min}(A)$ and $λ_{\max}(A)$, respectively. For matrices, we use the spectral norm, which equals the largest singular value of the matrix and is denoted by $\norm{\cdot}$. We will also use that for a symmetric, positive definite matrix $A$, $\norm{A}= \sqrt{λ_{\max}(A^\T A)}=λ_{\max}(A)$. The determinant and trace of the matrix $A$ are denoted by $\abs{A}$ and $\tr{A}$, respectively. 
	
	For stochastic differential equations with diffusion coefficient $σ$, we denote $a = σσ^\T$. 
	
	\section{General setting, Doob's $h$-transform and Guided processes}
	\label{sec:GeneralSetting}
		Throughout we assume existence of an underlying probability space $\left( Ω, \scr{F}, \bb{P}\right)$. Let $X = \{X_t  \}_{t\in[0,T]}$ be a Markov process on a Polish space $\scr{S}$ equipped with a $σ$-algebra $\scr{B}$ and define the filtration $ \scr{F}_t= σ(\{ X_s\colon s\leq t \})$. Let $x_0\in\scr{S}$. Throughout this paper, we denote  $\bb{P}_t = \bb{P}\rvert_{\scr{F}_t}$, $\mathbb{E}_t = \mathbb{E}\rvert_{\scr{F}_t}$ and assume all probabilities and expectations are taken conditional on $X_0=x_0$. For a $\bb{P}$-Markov process starting at $x_0\in\scr{S}$ and generated by a family of operators $\scr{L} = \{\scr{L}_t\}_{t\in[0,T]}$ defined on the same domain $\scr{D}(\scr{L})$, it holds that
	\begin{equation*}
		M_t^f := f(X_t)-f(x_0) - \int_0^t \scr{L}_s f(X_s)\dd s ,\qquad t\in (0,T),
	\end{equation*}
	is a local martingale for any function $f\in\scr{D}(\scr{L})$. We denote the space-time generator of the space-time  process $\{ (t, X_t) \}_{t\in[0,T]}$ by $\scr{A}f(t,x) = \pder{f}{t}(t,x)+\scr{L}_tf(t,x)$ and we extend $\scr{D}(\scr{L})$ to the domain $\scr{D}(\scr{A})$ of $\scr{A}$. For simplicity, we omit the subscript and write $\scr{L}f$ for $f\in\scr{D}(\scr{L})$. Observe that for $g\in\scr{D}(\scr{A})$, the process
	\begin{equation}
		\label{eq:Martingale}
		M_t^g := g(t, X_t) - g(0, x_0) - \int_{0}^t \scr{A}g(s, X_s)\dd s,\qquad t\in(0,T),
	\end{equation}
	defines a local martingale as well. \\
	
	\begin{defn}[$S$-good function]
		\label{def:tau-good-function}
		Let $S\leq T$. We call $h\in\scr{D}(\scr{A})$ an $S$-good function if $h$ is positive and 
		\begin{equation}
			\label{eq:DefDth}
			D_t^h = \frac{h(t, X_t)}{h(0, x_0)}\exp\left( - \int_0^t \frac{\scr{A}h}{h}(s, X_s)\dd s \right),\qquad 0\leq t\leq S, 
		\end{equation}
		is a martingale adapted to the filtration $\{\scr{F}_t\}_{t\in[0,S]}$. 
	\end{defn}
	
	The following proposition is an adaptation of Proposition 3.2 of \cite{palmowski2002} and provides conditions for verifying that $h$ is an $S$-good function 
	
	\begin{prop}[Adaptation of Proposition 3.2 of \cite{palmowski2002}]
		\label{prop:goodfunctions}
		Suppose that $h\in\scr{D}\left(\scr{A}\right)$ is such that $h$ and $\frac{\scr{A}h}{h}$ are bounded and measurable on $[0,S]\times \scr{S}$. Then $h$ is an $S$-good function. 
	\end{prop}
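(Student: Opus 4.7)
The plan is to show directly that $D^h$ is a bounded local martingale on $[0,S]$, from which the true martingale property follows automatically. Write
\[
D_t^h = \frac{h(t, X_t)}{h(0, x_0)} A_t, \qquad A_t := \exp\left(-\int_0^t \frac{\scr{A}h}{h}(s, X_s)\dd s\right),
\]
and recall from \eqref{eq:Martingale} that $M_t^h = h(t,X_t) - h(0,x_0) - \int_0^t \scr{A}h(s,X_s)\dd s$ is a local martingale. The key observation is that $A_t$ is continuous and of bounded variation, with $\dd A_t = -A_t (\scr{A}h/h)(t,X_t)\dd t$; in particular $A$ has no martingale part and no jumps, so the cross-variation with $M^h$ vanishes.

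Next, I would apply the integration-by-parts formula to the semimartingale product $h(t,X_t)\cdot A_t$. Since $A$ is continuous of finite variation, this yields
\[
h(t,X_t)A_t - h(0,x_0) = \int_0^t A_s \dd M_s^h + \int_0^t A_s \scr{A}h(s,X_s)\dd s - \int_0^t A_s h(s,X_s)\frac{\scr{A}h}{h}(s,X_s)\dd s.
\]
The last two Lebesgue integrals cancel identically, leaving $h(t,X_t) A_t - h(0,x_0) = \int_0^t A_s \dd M_s^h$. Because $A$ is bounded and predictable, the right-hand side is a local martingale, and hence so is $D_t^h = h(t,X_t)A_t/h(0,x_0)$.

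It remains to upgrade from local to true martingale on $[0,S]$. By hypothesis there exist constants $C_1, C_2 < \infty$ with $h \leq C_1$ and $|\scr{A}h/h| \leq C_2$ on $[0,S]\times \scr{S}$, so $A_t \leq e^{C_2 S}$ uniformly in $\omega$ and $t \in [0,S]$, giving the bound $0 \leq D_t^h \leq C_1 e^{C_2 S}/h(0,x_0)$ (note $h(0,x_0) > 0$ since $h$ is positive). A uniformly bounded local martingale is a martingale by dominated convergence, completing the verification that $h$ is $S$-good.

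The main technical point I expect to need care is the integration-by-parts step, since in the general Markov setting $M^h$ may be discontinuous (for example for jump processes in Section \ref{sec:ChemicalReactions}). This is handled precisely by the fact that $A$ is a continuous finite-variation process, so $[M^h, A] \equiv 0$ and the usual product formula reduces to the clean cancellation above; beyond that, the proposition is really just a boundedness argument converting the structural identity $\dd(hA) = A\dd M^h$ into a genuine martingale statement.
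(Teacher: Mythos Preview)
Your argument is correct and is precisely the Palmowski--Rolski route that the paper defers to: the paper does not supply its own proof of this proposition but cites \cite{palmowski2002}, and elsewhere (e.g.\ in the Delaunay example) explicitly invokes Lemma~3.1 of that reference for the local-martingale property of $D_t^{\tilde h}$ and then argues boundedness to upgrade to a true martingale. Your integration-by-parts computation showing $\dd(h A)=A\,\dd M^h$ together with the uniform bound $0\le D_t^h\le C_1 e^{C_2 S}/h(0,x_0)$ is exactly that two-step scheme, so there is nothing to add.
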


	\begin{defn}[Conditioned process]
			Let $S\leq T$ and $h$ be an $S$-good function. Define the change of measure 
		\begin{equation}
			\label{eq:DpstarDP}
			\dd\bb{P}^\star_t = D_t^h \dd\bb{P}_t, \qquad t\in[0, S].
		\end{equation}
		We refer to the new measure $\bb{P}^\star_t$ as the conditioned measure induced by $h$ and the process $X$ under $\mathbb{P}^\star$ is referred to as the conditioned process induced by $h$. We denote expectations with respect to $\mathbb{P}^\star_t$ by $\bb{E}_t^\star$. 
	\end{defn}
	
	The transformation of measures is known as Doob's $h$-transform. The function $h$ is typically chosen such that the process $X$ has particular properties under $\PP^\star_t$, which it does not possess under $\PP_t$. The following example is a key example to illustrate this. 
	
	\begin{ex}
		\label{ex:ConditionedMarkovProcesses}
		Suppose the transition kernel of $X$ admits a transition density with respect to a measure $ν$ on $\scr{S}$, i.e. $\bb{P}\left(X_t \in A \mid X_s=x\right) = \int_A p(s,x;t,y)\dd ν(y)$ for $s\leq t$ and $A\in\scr{B}$. For $T>0$, let the measure $\bb{P}^\star$ be the measure induced by $h(t,x) = \int p(t,x;T,y)\dd\mu(y)$ for a probability measure $\mu$ under which $p$ is also measurable. Since $\scr{A}h=0$, we have for measurable $f$
		\begin{align*}
			\bb{E}^\star  f(X_t)  &= \bb{E}\left(f(X_t) \frac{{h}(t, X_t)}{h(0,x_0)}\right) \\
			&= \int f(x) p(0,x_0;t,x)\frac{\int p(t,x;T,y)\dd \mu(y)}{\int p(0,x_0;T,y')\dd \mu(y')} \dd\nu(x) \\
			&= \int \left( \int f(x)\frac{p(0,x_0;t,x)p(t,x;T,y)}{p(0,x_0;T,y)} \dd\nu(x) \right) \frac{p(0,x_0;T,y)\dd\mu(y)}{\int p(0,x_0;T,y')\dd \mu(y')} \\
			&= \int \bb{E}\left( f(X_t)\mid X_T=y\right) \dd\xi(y) ,
		\end{align*}
		where
		\[\dd \xi(y) = \frac{p(0,x_0;T,y)\dd\mu(y)}{\int p(0,x_0;T,y')\dd\mu(y')}.\]
		In particular, for $x_T\in\scr{S}$, the measure $\mu=\delta_{x_T}$ induces the process $X^\star = \left(X\mid X_T=x_T\right)$. Measurement error on the value at the endpoint can be incorporated using  $\dd\mu(y) = q_{x_T}(y)\dd\bar\mu(y)$, for a  a probability density function $q_{x_T}$ and dominating measure $\bar\mu$. Lastly, this approach can also be used when forcing diffusions to have a certain distribution at time $T$, such as seen in \cite{baudoin2002}. Here one can choose $\dd\mu(y) = \frac{q(y)}{p(0,x_0;T,y)}\dd\nu(y)$ 
	\end{ex}
	
	Unfortunately, many interesting choices of $h$ require the transition density $p$ of the Markov process	to be tractable, which is usually not the case. Instead, one can try to use a tractable approximation $\tilde h$ to $h$. This leads to the following definition. 
	\begin{defn}[Guided process]
		Suppose $\tilde{h}\in\scr{D}(\scr{A})$ is an $S$-good function for $S\leq T$ and define the change of measure 
		\begin{equation*}
			\dd\bb{P}^\circ_t = D_t^{\tilde{h}} \dd\bb{P}_t, \qquad t\in[0,S].
		\end{equation*}
		The process $X$ under the law $\bb{P}^\circ$ is denoted by $X^\circ$ and is referred to as the {guided process induced by $\tilde{h}$}. We denote expectations with respect to $\bb{P}^\circ_t$ by $\bb{E}^\circ_t$.
	\end{defn}
	
	By formula (1.2) in \cite{palmowski2002} the extended generators of $X^\star$ and $X^\circ$ are given by 
	\begin{equation}
		\label{eq:GPGenerator}
		\scr{A}^\star f = \frac{1}{{h}} \left[ \scr{A}\left(f{h}\right) - f\scr{A}{h}\right] \qquad\text{and}\qquad \scr{A}^\circ f = \frac{1}{\tilde{h}} \left[ \scr{A}\left(f\tilde{h}\right) - f\scr{A}\tilde{h}\right]	,
	\end{equation}
	which characterises the dynamics of $X^\star$ and $X^\circ$, respectively.

	\begin{prop}
		\label{prop:HalfOpenInterval}
		Suppose $h$ and $\tilde{h}$ are $S$-good functions  for some $S\leq T$ and that $h$ is space-time harmonic for $\scr{A}$, i.e. $\scr{A}h = 0$. Assume $D_t^{\tilde{h}}>0$ on $[0,S]$. Then, for all $t\leq S$, $\bb{P}_t^\star \sim \bb{P}_t^\circ$ and 
		\begin{equation}
			\label{eq:HalfOpenInterval}
			\frac{\dd\bb{P}^\star_t}{\dd\bb{P}^\circ_t}\left(X\right) = \frac{ h(t, X_t)}{\tilde{h}(t, X_t)} \frac{\tilde{h}(0, x_0)}{h(0,x_0)}Ψ_t(X), \qquad t\in[0,S],
		\end{equation}
		where 	
		\begin{equation}
			\label{eq:defPsi}
			Ψ_t(X) =  \exp\left( \int_0^t \frac{\scr{A}\tilde{h}}{\tilde{h}}(s, X_s)\dd s \right).
		\end{equation} 
		
	\end{prop}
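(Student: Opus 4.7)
The plan is to obtain the Radon-Nikodym derivative by expressing both measures as changes from the common reference $\PP_t$ via the martingales $D_t^h$ and $D_t^{\tilde h}$, and then dividing.

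First I would observe that since $h$ is space-time harmonic, $\scr{A}h = 0$, so the general formula \eqref{eq:DefDth} collapses to
\[ D_t^h = \frac{h(t, X_t)}{h(0, x_0)}, \]
while for $\tilde h$ the exponential factor remains, giving
\[ D_t^{\tilde h} = \frac{\tilde h(t, X_t)}{\tilde h(0, x_0)} \Psi_t(X)^{-1}, \]
with $\Psi_t$ as in \eqref{eq:defPsi}. Both are strictly positive martingales on $[0,S]$: $D_t^h>0$ because $h$ is positive (by definition of an $S$-good function), and $D_t^{\tilde h}>0$ by hypothesis.

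Next, because $D_t^{\tilde h} > 0$ is a martingale with $D_0^{\tilde h} = 1$, the change of measure $\dd \PP_t^\circ = D_t^{\tilde h} \dd \PP_t$ is invertible, yielding $\PP_t \sim \PP_t^\circ$ with $\dd \PP_t / \dd \PP_t^\circ = (D_t^{\tilde h})^{-1}$. Combining with the definition \eqref{eq:DpstarDP} gives
\[ \frac{\dd \PP_t^\star}{\dd \PP_t^\circ} = \frac{\dd \PP_t^\star}{\dd \PP_t} \cdot \frac{\dd \PP_t}{\dd \PP_t^\circ} = \frac{D_t^h}{D_t^{\tilde h}}. \]
Substituting the two explicit expressions above produces
\[ \frac{D_t^h}{D_t^{\tilde h}} = \frac{h(t, X_t)}{\tilde h(t, X_t)} \cdot \frac{\tilde h(0, x_0)}{h(0, x_0)} \cdot \Psi_t(X), \]
which is exactly \eqref{eq:HalfOpenInterval}. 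Equivalence $\PP_t^\star \sim \PP_t^\circ$ follows because the displayed ratio is strictly positive (all three factors are positive by the positivity of $h$, $\tilde h$, and the exponential $\Psi_t$).

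There is no real obstacle here: the content of the proposition is essentially bookkeeping for Radon-Nikodym derivatives once one notes that $\scr{A}h = 0$ removes the exponential factor from $D_t^h$. The only point worth being slightly careful about is justifying the chain rule on $\scr{F}_t$, which is valid precisely because $h$ and $\tilde h$ are $S$-good (so $D_t^h$ and $D_t^{\tilde h}$ are honest martingales defining probability measures on $\scr{F}_t$) and because strict positivity of $D_t^{\tilde h}$ on $[0,S]$ ensures the inverse Radon-Nikodym derivative exists.
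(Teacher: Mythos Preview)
Your proposal is correct and follows essentially the same approach as the paper: both argue that positivity of $D_t^h$ and $D_t^{\tilde h}$ gives $\PP_t^\star \sim \PP_t \sim \PP_t^\circ$, then apply the chain rule $\dd\PP_t^\star/\dd\PP_t^\circ = D_t^h/D_t^{\tilde h}$ and simplify using $\scr{A}h=0$. Your version is slightly more explicit in writing out the simplified forms of $D_t^h$ and $D_t^{\tilde h}$ before taking the ratio, but the substance is identical.
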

	\begin{proof}
		Note that, since $h$ and $D_t^{\tilde{h}}$ are positive, both $\mathbb{P}_t^\star$ and $\mathbb{P}^\circ_t$ are equivalent to $\bb{P}_t$. Moreover, $\dd\bb{P}_t = \left(D_t^{\tilde{h}}\right)^{-1}\dd\bb{P}^\circ_t$ and thus 
		\begin{equation*}				\frac{\dd\bb{P}^\star_t}{\dd\bb{P}^\circ_t}  = \frac{\dd\bb{P}^\star_t}{\dd\bb{P}_t} \frac{\dd\bb{P}_t}{\dd\bb{P}^\circ_t} = \frac{D_t^h}{D_t^{\tilde{h}}}.
		\end{equation*}
		The proof now follows from substituting \eqref{eq:DefDth} and using $\scr{A}h=0$. 
	\end{proof}

	In applications, we typically obtain a candidate for $\tilde{h}$ via an auxiliary process. 
	
	\begin{defn}[Auxiliary process]
		\label{defn:Auxiliary}
		Let $\tilde{X}$ be a Markov process with generator $\tilde{\scr{L}}$ and space-time generator $\tilde{\scr{A}}$. When we consider a guided process induced by a tractable $\tilde{h}$ that satisfies $\tilde{\scr{A}}\tilde{h}=0$, we refer to $\tilde{X}$ as the auxiliary process. Conditions for absolute continuity can then be stated as properties of $\tilde{X}$.
	\end{defn}

	If $\tilde{h}$ is obtained from an auxiliary process then $Ψ$ takes the form 
	\begin{equation} 
		\label{rem:AlternateFormPsi}  
		Ψ_t(X) =  \exp\left(\int_0^t \frac{\left( \scr{L}-\tilde{\scr{L}} \right)\tilde{h}}{\tilde{h}}(s, X_s)\dd s \right).  
	\end{equation}
	
	In the setting of Example \ref{ex:ConditionedMarkovProcesses} it is often not too hard to find $S$-good functions $h$ and $\tilde{h}$ where $S<T$. We would like to strengthen this to $S\leq T$, i.e.\ to take the limit $t\uparrow T$ in  \eqref{eq:HalfOpenInterval}.  Sufficient conditions  are given in Theorem \ref{thm:MainResult}, the main result of this paper. 

	If we assume  $\dd\mu(y) = q_{x_T}(y)\dd\bar\mu(y)$ in Example \ref{ex:ConditionedMarkovProcesses}, with $q_{x_T}$ strictly positive, then it is natural to take  $\tilde{h}(t,x) = \int q_{x_T}(y) \tilde{p}(t,x;T,y)\dd\bar\mu(y)$. This simplifies showing that $\tilde h$ is a $T$-good function, as under mild conditions \Cref{prop:goodfunctions} can be applied.

	\medskip	
	
	For ease of reference, we summarise some of the introduced notation. In the table below,   the third column gives the measure on the path space, the fourth column gives the corresponding expectation with respect to the measure, while the rightmost column gives the infinitesimal space-time generator of the process.
	
	\begin{table}[h]
		\begin{tabular}{|l|l|l|l|l|}
			\hline
			$X$ & original, unconditioned Markov process & $\bb{P}_t$ & $\bb{E}_t$ & $\scr{A}$ \\
			$X^\star$ & corresponding conditioned processes induced by $h$ & $\bb{P}^\star_t$ & $\bb{E}_t^\star$ & $\scr{A}^\star$ \\
			$X^\circ$ & guided process induced by $\tilde{h}$ & $\bb{P}^\circ_t$ & $\bb{E}_t^\circ$ & $\scr{A}^\circ$ \\
			\hline
		\end{tabular}
	\end{table}

	\section{Conditions and proof of absolute continuity of $X^\star$ and $X^\circ$ on $[0,T]$}
	\label{sec:ConditionsAndProof}
	Assume $h$ and $\tilde{h}$ are $S$-good functions for all $S<T$. By \Cref{prop:HalfOpenInterval}, the processes $X^\star$ and $X^\circ$ are absolutely continuous on $\scr{F}_t$ for $t<T$. In this section, we provide conditions to ensure that absolute continuity also holds in the limit $t\uparrow T$. \\
	
	To prove this, we fix a constant $t_0\in[0,T)$ %, that we can adjust later, 
	and impose the following assumptions.
	
	\begin{ass}
		\label{ass:MainAssumption}
		There exists a positive continuous scaling function $κ\colon[t_0,T)\to(0,\infty)$ and a family of $\scr{F}_t$-measurable events $\left\{ A_k(t)\right\}_k$ for each $t\in[t_0,T)$ so that the following assumptions hold
		\begin{enumerate}[label={\color{red} (\ref{ass:MainAssumption}\alph*)}]
			\item \label{ass:EquivalenceAssumption-EventSequence1}
			For all $k$ and $t_0\leq s\leq t\leq T$, $A_k(t)\subseteq A_{k+1}(t)$ and $A_k(t)\subseteq A_k(s)$. 
			\item 	\label{ass:LimitForF}
			The transition kernel of $X$ admits a transition density $p$ under $\bb{P}$ with respect to a dominating measure $\nu$, that is $\bb{P}(X_t\in A\mid X_s=x)=\int_A p(s,x;t,y)\dd\nu(y)$ for $0\leq s\leq t\leq T$, $A\in\scr{B}$, and $x\in\scr{S}$. Moreover, for all $s<T$ and $y\in\scr{S}$,
			\[ \lim_{t\uparrow T} h^\sharp(t;s,x) = h(s,x), \]
			where
			\begin{equation}
				\label{eq:F}
				h^\sharp(t; s, x) =\int κ(t)\tilde{h}(t,y)p(s, x;t, y)\dd\nu(y) . 
			\end{equation}
			\item \label{ass:TransitionDensityBound}
			For all $t_0\leq s<t<T$, the random variable $h^\sharp(t;s,X_s)$ is $\bb{P}$-almost surely bounded. 
			\item \label{ass:EquivalenceAssumption2Limitfraction}
			$κ$ is such that 
			\[ \lim_{k\to\infty} \lim_{t\uparrow T} \bb{E}^\star_t \left( \frac{\tilde{h}(t, X_t)}{h(t, X_t)} κ(t)\ind_{A_k(t)} \right) = \mathbb{P}^\star_T(A(T)),\]
			where, for $t\in[t_0,T)$, $A(t) = \bigcup_k A_k(t)$.
			\item \label{ass:AlmostSureBoundOnPsi}
			For all fixed $k$, $Ψ_t(X)κ(t)\ind_{A_k(t)}$ is $\bb{P}^\circ$-almost surely uniformly bounded in $t$. 
		\end{enumerate}
	\end{ass}
	
	\begin{lem} \label{ass:AlmostSureLimit}
	Define $A_k(T) = \cap_{t<T}A_k(t)$. Under \Cref{ass:MainAssumption}, there exists a random variable $Ψ_T^κ(X)$ so that for all $k$,  $Ψ_T^{\kappa}(X)\ind_{A_k(T)} = \lim_{t\uparrow T} Ψ_t(X)κ(t)\ind_{A_k(t)}$
	\end{lem}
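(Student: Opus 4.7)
The plan is to analyse the behaviour of $\Psi_t(X)\kappa(t)\ind_{A_k(t)}$ separately on $A_k(T)$ and on its complement, then define $\Psi_T^\kappa(X)$ as the $\bb{P}^\circ$-a.s.\ pointwise limit obtained on $A(T)=\bigcup_k A_k(T)$, extended by $0$ elsewhere. The claim of the lemma then splits into two matching statements, one on each event.

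By the nesting $A_k(t)\subseteq A_k(s)$ for $s\le t$ from \ref{ass:EquivalenceAssumption-EventSequence1}, the map $t\mapsto \ind_{A_k(t)}$ is non-increasing with pointwise limit $\ind_{A_k(T)}$. On $A_k(T)^c$, for $\bb{P}^\circ$-a.e.\ $\omega$ there is some $t^\ast(\omega)<T$ with $\ind_{A_k(t)}(\omega)=0$ for all $t\in[t^\ast,T)$; combined with the $\bb{P}^\circ$-a.s.\ uniform bound from \ref{ass:AlmostSureBoundOnPsi} this shows $\Psi_t(X)\kappa(t)\ind_{A_k(t)}\to 0$ there, matching the right-hand side since $\ind_{A_k(T)}=0$ on this event.

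On $A_k(T)$ the indicator equals $1$ for every $t\in[t_0,T)$, so what must be established is a $\bb{P}^\circ$-a.s.\ pointwise limit of $\Psi_t(X)\kappa(t)$ itself. The useful rewriting, which follows directly by combining \eqref{eq:DefDth} with \eqref{eq:defPsi}, is
\begin{equation*}
	\Psi_t(X)\kappa(t)=\frac{\kappa(t)\tilde h(t,X_t)}{\tilde h(0,x_0)\, D_t^{\tilde h}}.
\end{equation*}
The denominator is a positive $\bb{P}$-martingale and hence has a $\bb{P}$-a.s.\ limit in $[0,\infty)$; by the equivalence of $\bb{P}_t$ and $\bb{P}^\circ_t$ for every $t<T$ the same limit holds $\bb{P}^\circ$-a.s. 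For the numerator one notes that $\kappa(t)\tilde h(t,X_t)$ is the diagonal evaluation $h^\sharp(t;t,X_t)$, and combines the pointwise convergence in \ref{ass:LimitForF}, the $\bb{P}$-a.s.\ boundedness in \ref{ass:TransitionDensityBound}, and the uniform bound in \ref{ass:AlmostSureBoundOnPsi} to extract a $\bb{P}^\circ$-a.s.\ pathwise limit on $A_k(T)$. The ratio then converges $\bb{P}^\circ$-a.s., and consistency across $k$ is immediate from $A_k(T)\subseteq A_{k+1}(T)$, so the resulting $\Psi_T^\kappa(X)$ is well-defined.

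The principal obstacle is precisely this last step: upgrading the conditional-mean convergence $h^\sharp(t;s,x)\to h(s,x)$, which is the form of \ref{ass:LimitForF}, to $\bb{P}^\circ$-a.s.\ pathwise convergence of $\kappa(t)\tilde h(t,X_t)$ on $A_k(T)$. The diagonal case $s=t$ is not directly addressed by the hypotheses, so a careful combination of the martingale regularity of $D_t^{\tilde h}$ with the boundedness in \ref{ass:TransitionDensityBound} and \ref{ass:AlmostSureBoundOnPsi} is needed, and it is in taming this diagonal limit that the specific choice of scaling $\kappa$ plays its essential role.
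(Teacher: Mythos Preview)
Your decomposition into $A_k(T)$ and its complement matches the paper, and your handling of the complement (the indicator drops to zero while the quantity remains bounded by \ref{ass:AlmostSureBoundOnPsi}) is exactly the paper's second step. The divergence is entirely on $A_k(T)$, and there your route via the factorisation $\Psi_t(X)\kappa(t)=\kappa(t)\tilde h(t,X_t)\big/\bigl(\tilde h(0,x_0)\,D_t^{\tilde h}\bigr)$ creates two gaps that the hypotheses do not close. First, \ref{ass:LimitForF} concerns $h^\sharp(t;s,x)$ for \emph{fixed} $s<T$ and says nothing about the diagonal $s=t$, while \ref{ass:TransitionDensityBound} gives only boundedness; knowing that $D_t^{\tilde h}$ converges and that $\Psi_t\kappa(t)$ is bounded tells you at most that $\kappa(t)\tilde h(t,X_t)$ is bounded, not that it converges. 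Second, even granting convergence of the numerator you would still need $\lim_{t\uparrow T}D_t^{\tilde h}>0$ $\bb P^\circ$-a.s.\ to pass to the ratio, and a positive martingale can perfectly well have a.s.\ limit zero on a set of positive measure---nothing in \Cref{ass:MainAssumption} excludes this. So the last paragraph of your proposal is an honest flag of an unresolved obstacle, not a resolution; as written the argument is incomplete.

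The paper does not go through this factorisation at all. It works with $\Psi_t(X)\kappa(t)$ directly: since $\log\Psi_t(X)=\int_0^t(\scr{A}\tilde h/\tilde h)(s,X_s)\dd s$ is an integral and $\kappa$ is continuous, the map $t\mapsto\Psi_t(X)\kappa(t)$ is continuous on $[t_0,T)$; combined with the uniform bound on $A_k(T)$ from \ref{ass:AlmostSureBoundOnPsi}, the paper asserts that the limit exists $\bb P^\circ$-a.s. Assumptions \ref{ass:LimitForF} and \ref{ass:TransitionDensityBound} play no role in this lemma. (One may remark that continuity plus boundedness on $[t_0,T)$ does not by itself force a limit at $T$, so the paper's step is terse; but the detour through $h^\sharp$ and $D^{\tilde h}$ is in any case not the intended mechanism.)
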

	\begin{proof}
		Note that $\log Ψ$ is an integral and $κ$ is continuous, and thus, as $A_k(T)\subseteq A_k(t)$ for all $t$ and by \ref{ass:AlmostSureBoundOnPsi}, the map $t\mapsto Ψ_t(X)κ(t)$ is continuous and bounded on $A_k(T)$ under $\bb{P}^\circ$. Hence $\lim_{t\uparrow T}Ψ_t(X)κ(t)\ind_{A_k(T)}$ exists in the $\bb{P}^\circ$-almost sure sense. Clearly, a random variable $\Psi_T^\kappa(X)$ also exists so that $\lim_{t\uparrow T}Ψ_t(X)κ(t)\ind_{A_k(T)} = \Psi_T^\kappa(X)\ind_{A_k(T)}$. Moreover, 
		
		\begin{align*}
			\abs{ Ψ_t(X)κ(t)\ind_{A_k(t)} -Ψ_T^κ(X)\ind_{A_k(T)} } & \leq \abs{Ψ_t(X)κ(t)-Ψ_T^κ(X)}\ind_{A_k(T)} \\
			&\quad + \abs{ Ψ_t(X)κ(t)} \ind_{A_k(t)\setminus A_k(T)}
		\end{align*}
		
		It follows from the preceding that the first term tends to $0$ while the second term is bounded by \ref{ass:AlmostSureBoundOnPsi} and the indicator tends to $0$. 
	\end{proof}
	
	\begin{thm}
		\label{thm:MainResult}
		Suppose \Cref{ass:MainAssumption} is satisfied. Then for any measurable function $f$,
		\begin{equation}\label{eq:equivalenceClosedInterval}
			\bb{E}_T^\star\left( f(X)\ind_{A(T)} \right) = \bb{E}_T^\circ \left( f(X)\frac{\tilde{h}(0,x_0)}{h(0,x_0)} Ψ_T^κ(X)\ind_{A(T)} \right)
		\end{equation}

		In particular, if $\bb{P}^\circ_T(A(T))=1$, the measures are equivalent with 
		\[ \der{\bb{P}^\star_T}{\bb{P}_T^\circ}(X) = \frac{\tilde{h}(0,x_0)}{h(0,x_0)}\Psi_T^\kappa(X). \]
		\end{thm}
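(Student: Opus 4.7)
The plan is to leverage Proposition \ref{prop:HalfOpenInterval}, which gives $\dd\bb{P}^\star_t/\dd\bb{P}^\circ_t$ on $\scr{F}_t$ for every $t\in[t_0,T)$, and to use the scaling $\kappa(t)$ to neutralise the typically singular ratio $h(t,X_t)/\tilde{h}(t,X_t)$ before letting $t\uparrow T$. For fixed $k$, $t\in[t_0,T)$ and $f$ bounded and $\scr{F}_s$-measurable with $s<T$ (general measurable $f$ being handled at the end), the random variable $\ind_{A_k(t)}\tilde{h}(t,X_t)\kappa(t)/h(t,X_t)$ is $\scr{F}_t$-measurable, so substituting $G = f(X)\ind_{A_k(t)}\tilde{h}(t,X_t)\kappa(t)/h(t,X_t)$ into $\bb{E}^\star(G)=\bb{E}^\circ(G\,\dd\bb{P}^\star_t/\dd\bb{P}^\circ_t)$ and invoking \eqref{eq:HalfOpenInterval} yields the pre-limit identity
\[
\bb{E}^\star\!\left(f(X)\ind_{A_k(t)}\frac{\tilde{h}(t,X_t)\kappa(t)}{h(t,X_t)}\right) = \frac{\tilde{h}(0,x_0)}{h(0,x_0)}\,\bb{E}^\circ\!\left(f(X)\ind_{A_k(t)}\Psi_t(X)\kappa(t)\right).
\]

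I would then pass to the limit $t\uparrow T$ on each side. On the $\bb{P}^\circ$-side, Lemma \ref{ass:AlmostSureLimit} provides the $\bb{P}^\circ$-almost sure convergence of $\Psi_t(X)\kappa(t)\ind_{A_k(t)}$ to $\Psi_T^\kappa(X)\ind_{A_k(T)}$, and \ref{ass:AlmostSureBoundOnPsi} supplies the dominating bound, so dominated convergence delivers $\bb{E}^\circ(f(X)\Psi_T^\kappa(X)\ind_{A_k(T)})$. On the $\bb{P}^\star$-side, for $f\equiv 1$ the double limit $\lim_k\lim_t$ of the left-hand side is exactly \ref{ass:EquivalenceAssumption2Limitfraction}, so the theorem already follows for $f\equiv 1$. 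For general bounded $f$ I would extend by a monotone-class / $\pi$-system argument on events $B\in\scr{F}_s$ with $s<T$, combining non-negativity of the regulariser with Scheff\'e's lemma to upgrade the mass convergence supplied by \ref{ass:EquivalenceAssumption2Limitfraction} to $L^1(\bb{P}^\star)$-convergence of $\tilde{h}(t,X_t)\kappa(t)/h(t,X_t)\,\ind_{A_k(t)}$ toward $\ind_{A_k(T)}$, from which the limit integrated against a bounded $f$ follows.

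Finally, taking $k\to\infty$ and applying monotone convergence on both sides (since $A_k(T)\uparrow A(T)$, $\Psi_T^\kappa\geq 0$, and $\ind_{A_k(T)}\uparrow\ind_{A(T)}$) produces \eqref{eq:equivalenceClosedInterval}. Under the additional hypothesis $\bb{P}^\circ_T(A(T))=1$, the identity becomes $\bb{E}^\star_T f=(\tilde{h}(0,x_0)/h(0,x_0))\bb{E}^\circ_T(f\Psi_T^\kappa)$ for all bounded measurable $f$, which identifies the Radon--Nikodym derivative; strict positivity of $\Psi_T^\kappa$ (inherited from $\tilde{h}$ and $\kappa$) then yields mutual equivalence $\bb{P}^\star_T\sim\bb{P}^\circ_T$. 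The main obstacle is the $\bb{P}^\star$-side limit for general $f$: assumption \ref{ass:EquivalenceAssumption2Limitfraction} only controls convergence of the total mass, and since $h/\tilde{h}$ is typically singular at $T$ one cannot apply dominated convergence directly; the Scheff\'e-based upgrade, which implicitly identifies the $\bb{P}^\star$-almost sure limit of the regulariser on $A_k(T)$ as $\ind_{A_k(T)}$, is the delicate step and the place where \ref{ass:EquivalenceAssumption2Limitfraction} does real work.
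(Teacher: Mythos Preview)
Your overall architecture is close to the paper's, but the place where you diverge---applying Scheff\'e's lemma on the $\bb{P}^\star$-side to the sequence $\tilde{h}(t,X_t)\kappa(t)/h(t,X_t)\,\ind_{A_k(t)}$---does not go through with the stated assumptions. Scheff\'e requires almost-sure convergence together with convergence of the integrals, and neither is available here. First, nothing in \Cref{ass:MainAssumption} yields $\bb{P}^\star$-almost sure convergence of $\tilde{h}(t,X_t)\kappa(t)/h(t,X_t)\,\ind_{A_k(t)}$ to $\ind_{A_k(T)}$; the only pathwise control the assumptions give is \ref{ass:AlmostSureBoundOnPsi}, which concerns $\Psi_t(X)\kappa(t)$ under $\bb{P}^\circ$, not the ratio under $\bb{P}^\star$. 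Second, \ref{ass:EquivalenceAssumption2Limitfraction} gives only the \emph{double} limit $\lim_k\lim_t$, not the single limit in $t$ for fixed $k$ that Scheff\'e would need as the convergence-of-means input. So the ``Scheff\'e-based upgrade'' you describe as the delicate step is in fact a genuine gap.

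The paper avoids this by running Scheff\'e on the $\bb{P}^\circ$-side instead, where \Cref{ass:AlmostSureLimit} (a consequence of \ref{ass:AlmostSureBoundOnPsi}) supplies the almost-sure limit of $\Psi_t(X)\kappa(t)\ind_{A(t)}$. The convergence of $\bb{P}^\circ$-means is then established by a sandwich argument: the lower bound comes from \ref{ass:EquivalenceAssumption2Limitfraction} exactly as you use it, while the upper bound exploits the monotonicity $A(t)\subseteq A(s)$ for $s\le t$ to replace $\ind_{A(t)}$ by the $\scr{F}_s$-measurable $\ind_{A(s)}$, after which \Cref{lem:FractionLimit} (driven by \ref{ass:LimitForF} and \ref{ass:TransitionDensityBound}) computes the limit. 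This monotonicity trick is the missing idea in your proposal. A smaller point: for the equivalence conclusion you also need $\bb{P}^\circ_T(A(T))=1\implies\bb{P}^\star_T(A(T))=1$, which the paper deduces from \Cref{prop:HalfOpenInterval} and monotonicity of $t\mapsto A(t)$; your write-up silently assumes it.
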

	
	\begin{proof}[Proof of \Cref{thm:MainResult}]
		For simplicity, we denote $\bar{h} = \tilde{h}(0,x_0) / h(0,x_0)$. The proof is structured as follows. First we show that $\bb{E}_T^\circ \left( \bar{h} Ψ_T^κ(X) \ind_{A(T)} \right)=\bb{P}^\star_T(A(T))$, then we show that $\bb{E}_t^\circ \left( \bar{h} Ψ_t(X)κ(t) \ind_{A(t)} \right)\to\bb{P}^\star_T(A(T))$ as $t\uparrow T$. Finally, we finish the proof via Scheff\'e's lemma. \\
		
		First note that for any fixed $k$, it follows from dominated convergence, combined with \ref{ass:AlmostSureBoundOnPsi}, \Cref{ass:AlmostSureLimit} and \Cref{prop:HalfOpenInterval} that 
		\[ \bb{E}_T^\circ \left( \bar{h}Ψ_T^κ(X)\ind_{A_k(T)}\right) = \lim_{t\uparrow T} \bb{E}_t^\circ \left(  \bar{h} Ψ_t(X)κ(t)\ind_{A_k(t)} \right) =\lim_{t\uparrow T}\bb{E}^\star_t\left( \frac{\tilde{h}(t, X_t)}{h(t, X_t)}κ(t)\ind_{A_k(t)} \right).   \]
		We now send $k\to\infty$ on both sides and find that,  by \ref{ass:EquivalenceAssumption-EventSequence1} combined with monotone convergence, the left hand side tends to $\bb{E}^\circ_T \left( \bar{h} Ψ_T^κ(X)\ind_{A(T)}\right)$ while the right hand side tends to $\bb{P}^\star_T(A(T))$ by \ref{ass:EquivalenceAssumption2Limitfraction}. Hence $\bb{E}^\circ_T \left( \bar{h} Ψ_T^κ(X)\ind_{A(T)}\right)=\bb{P}^\star_T(A(T))$ \\
		
		Now note that for any fixed $m$, it follows from \Cref{prop:HalfOpenInterval} that 
		\[ \lim_{t\uparrow T} \bb{E}^\circ_t \left( \bar{h} Ψ_t(X)κ(t)\ind_{A(t)} \right) \geq  \lim_{t\uparrow T} \bb{E}^\circ_t \left( \bar{h} Ψ_t(X)κ(t)\ind_{A_m(t)} \right) = \lim_{t\uparrow T}\bb{E}^\star_t\left( \frac{\tilde{h}(t, X_t)}{h(t, X_t)}κ(t)\ind_{A_m(t)} \right). \]
		Upon sending $m\to\infty$, we have, by \ref{ass:EquivalenceAssumption2Limitfraction}, $\lim_{t\uparrow T} \bb{E}^\circ_t \left( \bar{h} Ψ_t(X)κ(t)\ind_{A(t)} \right) \geq \mathbb{P}^\star_T(A(T))$. For the other inequality, we note that for any $s<T$, by \Cref{prop:HalfOpenInterval},
		\begin{gather}
			\lim_{t\uparrow T} \bb{E}^\circ_t \left( \bar{h} Ψ_t(X)κ(t)\ind_{A(t)} \right) \leq \lim_{t\uparrow T} \bb{E}^\circ_t \left( \bar{h} Ψ_t(X)κ(t)\ind_{A(s)} \right) \\
			= \lim_{t\uparrow T}\bb{E}^\star_t \left( \ind_{A(s)}\frac{\tilde{h}(t, X_t)}{h(t, X_t)} κ(t) \right) = \mathbb{P}^\star_s(A(s)) 
			\end{gather}
		where the last equality follows upon taking $g_s=\ind_{A(s)}$ in \Cref{lem:FractionLimit}. Upon sending $s\uparrow T$, we find by monotonicity of measures, $\lim_{t\uparrow T} \bb{E}^\circ_t \left( \bar{h} Ψ_t(X)κ(t)\ind_{A(t)} \right) \leq \mathbb{P}^\star_T(A(T))$.  \\
		
		We thus conclude that $\bb{E}^\circ_t \left( \bar{h} Ψ_t(X)κ(t)\ind_{A(t)} \right) \to \bb{E}^\circ_T \left( \bar{h} Ψ_T^κ(X)\ind_{A(T)} \right)$. Now note that upon taking $\lim_{k\to\infty}$ as well in \Cref{ass:AlmostSureLimit} and interchanging $\lim_{k\to\infty}$ and $\lim_{t\uparrow T}$, we have that $Ψ_t(X)κ(t)\ind_{A(t)} \to Ψ_T^κ(X)\ind_{A(T)}$ in the $\bb{P}^\circ$-almost sure sense. Here the interchange of limits is allowed as the sequence is monotone in $k$. Hence, by Scheff\'e's lemma, $Ψ_t(X)κ(t)\ind_{A(t)} \to Ψ_T^κ(X)\ind_{A(T)}$ in $L^1(\bb{P}^\circ)$. \\
		
		Now let $s<T$ and let $f_s$ be any bounded positive $\scr{F}_s$-measurable function such that the support of $f_s(X)$ is contained in $A(T)$. Then it follows from $L^1$ convergence and \Cref{prop:HalfOpenInterval} that 
		\[ \begin{aligned}
			\bb{E}_T^\circ \left( f_s(X)\bar{h} Ψ_T^κ(X)\ind_{A(T)}\right) &= \lim_{t\uparrow T} \bb{E}_t^\circ \left( f_s(X) \bar{h} Ψ_t(X)κ(t)\ind_{A(t)}\right) \\
			&= \lim_{t\uparrow T} \bb{E}_t^\star \left( f_s(X) \frac{\tilde{h}(t, X_t)}{h(t, X_t)}κ(t)\ind_{A(t)}\right) \\
		\end{aligned}	.
		\]
		Since $f_s(X)$ has support only on $A(T)$ and for $t\in [s,T)$, $A(T)\subseteq A(t)\subseteq A(s)$, we have that $f_s(X)\ind_{A(s)}=f_s(X)\ind_{A(t)}=f_s(X)\ind_{A(T)}$. Hence, upon applying \Cref{lem:FractionLimit} to $f_s\ind_{A(s)}$, 
		\[ \begin{aligned} 
			\lim_{t\uparrow T} \bb{E}_t^\star \left( f_s(X) \frac{\tilde{h}(t, X_t)}{h(t, X_t)}κ(t)\ind_{A(t)}\right) &= \lim_{t\uparrow T} \bb{E}_t^\star \left( f_s(X) \frac{\tilde{h}(t, X_t)}{h(t, X_t)}κ(t)\ind_{A(s)}\right) \\
			&= \mathbb{E}_s^\star\left(f_s(X)\ind_{A(s)}\right) = \mathbb{E}_T^\star\left( f_s(X)\ind_{A(T)} \right) 
			\end{aligned}.  \]
		
		For the equivalence, it suffices to show that $\bb{P}^\circ_T(A(T))=1\implies\bb{P}_T^\star(A(T))=1$. Note that by monotonicity $\bb{P}^\star(A(T))=\lim_{t\uparrow T}\bb{P}^\star(A(t))$. Now for all $t<T$, $\bb{P}^\circ(A(t))\geq\bb{P}^\circ(A(T))$ and thus the result follows from \Cref{prop:HalfOpenInterval}. 
	\end{proof}
	
	\subsection{Discussion of \Cref{ass:MainAssumption}}

	The order of verifying the assumptions in \Cref{ass:MainAssumption} is often as follows. We first choose the function $κ$ so that \ref{ass:EquivalenceAssumption2Limitfraction} holds, then we compute $Ψ_t(X)κ(t)$ and find $V$ so that $Ψ_t(X)κ(t)$ is bounded whenever $V$ is bounded to ensure  \ref{ass:AlmostSureBoundOnPsi} holds. We then have \ref{ass:EquivalenceAssumption-EventSequence1} via \Cref{lem:LyapunovFunction} and, if possible, eliminate the indicator in the expression for the Radon-Nikodym derivative and thereby proof equivalence by showing that $V$ is almost surely bounded. \\
	
	 The function $κ$ is incorporated to enable compensating for a possible difference of smoothness in $\tilde{X}$ and $X$. In \Cref{sec:Delauney}, we consider an example where $X$ is a process in a discrete state space, while $\tilde{X}$ is a continuous process.  In such a setting, incorporating a nonconstant $\kappa$ is essential. If for example both $X$ and $\tilde{X}$ are solutions to SDEs, we can always take $κ=1$.

	\begin{prop}
		\label{lem:LyapunovFunction}
		If the events $A_k(t)$ are defined by
		\begin{equation}
			\label{eq:EventsAkt}
			A_k(t) = \left\{ \sup_{t_0 \leq s <t} V(x, X_s)\leq k\right\}
		\end{equation}
		for  a nonnegative function $V\colon[t_0,T)\times\scr{S}\to[0,\infty)$, 
		then \ref{ass:EquivalenceAssumption-EventSequence1} is satisfied. If also $\sup_t V(t, X_t)$ is $\bb{P}^\circ$-almost surely bounded, then the  Radon-Nikodym derivative in \Cref{thm:MainResult} can be simplified since $\bb{P}^\circ_T(A(T)) =1$ and thus the measures are equivalent.
	\end{prop}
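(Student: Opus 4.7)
The plan is to verify both sub-claims directly from the definition of $A_k(t)$ in \eqref{eq:EventsAkt}. The monotonicity claim splits into two parts, each of which is essentially tautological, and then the equivalence statement reduces to an application of \Cref{thm:MainResult}.

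First I would check that $A_k(t)\subseteq A_{k+1}(t)$: on the event $\{\sup_{t_0\leq s<t}V(x,X_s)\leq k\}$ the same supremum is automatically bounded by $k+1$. Next, for $t_0\leq s\leq t$, the supremum over $[t_0,s)$ is dominated by the supremum over the larger interval $[t_0,t)$, so $\sup_{t_0\leq r<t}V(x,X_r)\leq k$ implies $\sup_{t_0\leq r<s}V(x,X_r)\leq k$, giving $A_k(t)\subseteq A_k(s)$. Together these two inclusions yield \ref{ass:EquivalenceAssumption-EventSequence1}.

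For the second part of the proposition, I would argue as follows. Suppose $\sup_{t\in[t_0,T)}V(t,X_t)$ is $\bb{P}^\circ$-almost surely finite. Then there is a $\bb{P}^\circ$-almost surely finite random variable $M$ with $V(s,X_s)\leq M$ for all $s\in[t_0,T)$. Choose an integer $k\geq M$ (pathwise); then for every $t\in[t_0,T)$ we have $\sup_{t_0\leq s<t}V(x,X_s)\leq k$, i.e.\ $\omega\in A_k(t)$ for every such $t$, so $\omega\in\bigcap_{t<T}A_k(t)=A_k(T)$ and hence $\omega\in A(T)=\bigcup_k A_k(T)$. Thus $\bb{P}^\circ_T(A(T))=1$, which by the concluding statement of \Cref{thm:MainResult} forces $\bb{P}^\star_T(A(T))=1$ and identifies the Radon--Nikodym derivative as $\bar{h}\,\Psi_T^\kappa(X)$ without any indicator, giving $\bb{P}_T^\star\sim\bb{P}_T^\circ$.

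Since each step reduces to a bookkeeping observation about monotone families of events and to invoking the already-proved main theorem, I do not expect any genuine obstacle here; the only point that deserves a line of explanation is the measurable selection of the integer $k$ from the random bound $M$, which is handled by pathwise choice rather than by a uniform bound.
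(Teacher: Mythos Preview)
Your proposal is correct. The paper states this proposition without proof, treating both claims as self-evident; your write-up simply spells out the routine verifications the authors omit. One small remark: your closing worry about ``measurable selection of the integer $k$'' is not actually an issue, since you are only establishing the set-theoretic identity $A(T)=\bigcup_k A_k(T)=\{\sup_{t_0\le s<T}V(s,X_s)<\infty\}$ and then reading off $\bb{P}^\circ_T(A(T))=1$; no measurable choice of $k(\omega)$ is required.
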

	
%	\begin{rem}
%		It follows from monotonicity of measures and monotonicity of the events $\{A_k(t)\}$ in both $k$ and $t$ that $\bb{P}^\star(A(T))=1$ if and only if $\bb{P}^\circ(A(T))=1$ and thus it also suffices to show that $\sup_t V(t, X_t)$ is $\bb{P}^\star$-almost surely bounded in \Cref{lem:LyapunovFunction}. 
%	\end{rem}

	The following proposition gives a condition for verifying $\bb{P}^\circ$-almost sure boundedness of $V$ and thus showing $\bb{P}^\circ_T(A(T))=1$. 
	
	\begin{prop}
		\label{rem:DoobSupermartingaleInequality}
	Suppose $\scr{A}^\circ V\leq 0$, then there exists a random variable $C$ such that $\sup_{t_0\leq t<T} V(t, X_t)\leq C$ and  $\bb{P}^\circ(C\geq λ)\leq λ^{-1}\bb{E}_{\bb{P}^\circ} V\left(t_0, X_{t_0}\right)$ for any $λ>0$. 
	\end{prop}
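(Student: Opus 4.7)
The plan is to show that the space-time process $\{V(t, X_t)\}_{t \in [t_0, T)}$ is a nonnegative $\bb{P}^\circ$-supermartingale, and then invoke Doob's maximal inequality with $C := \sup_{t_0 \leq t < T} V(t, X_t)$.

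First I would apply the martingale characterization \eqref{eq:Martingale}, stated there for $\scr{A}$ but equally valid for the generator $\scr{A}^\circ$ of $X^\circ$ under $\bb{P}^\circ$: for any $g \in \scr{D}(\scr{A}^\circ)$ the process
\[ M_t^g := g(t, X_t) - g(t_0, X_{t_0}) - \int_{t_0}^t \scr{A}^\circ g(s, X_s) \dd s \]
is a local martingale under $\bb{P}^\circ$. Taking $g = V$ and using the hypothesis $\scr{A}^\circ V \leq 0$ gives the decomposition
\[ V(t, X_t) = V(t_0, X_{t_0}) + M_t^V + \int_{t_0}^t \scr{A}^\circ V(s, X_s) \dd s, \]
in which the last term is nonincreasing in $t$. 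Combined with the standing nonnegativity of $V$ from \Cref{lem:LyapunovFunction}, this identifies $\{V(t, X_t)\}$ as a nonnegative local supermartingale under $\bb{P}^\circ$. Nonnegativity plus Fatou's lemma along a localising sequence then promotes it to a genuine supermartingale.

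The second step is purely measure-theoretic. I would apply Doob's maximal inequality for nonnegative c\`adl\`ag supermartingales, which states that for any $λ > 0$ and any $t \in [t_0, T)$,
\[ \bb{P}^\circ\!\left( \sup_{t_0 \leq s \leq t} V(s, X_s) \geq λ \right) \leq λ^{-1} \bb{E}_{\bb{P}^\circ} V(t_0, X_{t_0}). \]
Setting $C := \sup_{t_0 \leq t < T} V(t, X_t)$ and letting $t \uparrow T$, the left-hand event is a monotone increasing union, so monotone convergence of probabilities yields $\bb{P}^\circ(C \geq λ) \leq λ^{-1} \bb{E}_{\bb{P}^\circ} V(t_0, X_{t_0})$, which is the claimed bound.

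The main obstacle I anticipate is the usual local-to-true-supermartingale upgrade, which is handled by nonnegativity plus Fatou. A secondary, largely cosmetic point is path regularity for the supremum: Doob's inequality presumes a c\`adl\`ag version of $t \mapsto V(t, X_t)$, which is standard for Markov processes on the Polish space $\scr{S}$ given that $V$ is continuous in its time argument and the sample paths of $X^\circ$ are c\`adl\`ag, the latter already implicit in the setup of \Cref{sec:GeneralSetting}. Everything else is routine.
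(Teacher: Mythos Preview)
Your proposal is correct and follows essentially the same route as the paper: decompose $V(t,X_t)$ via the local-martingale characterisation, use nonnegativity of $V$ together with $\scr{A}^\circ V\le 0$ and Fatou to pass from local to true supermartingale, then apply Doob's maximal inequality. The only cosmetic difference is that the paper first argues $M^V$ is bounded below by $-V(t_0,X_{t_0})$ and upgrades $M^V$ to a supermartingale (their \Cref{lem:supermartingale} is precisely the Fatou argument you sketch), whereas you apply the same Fatou step directly to the nonnegative process $V$; both routes are equivalent.
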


	\begin{proof}
		We first show that $\{V(t,X_t)\}_{t_0\leq t<T}$ is a supermartingale. Note that by \eqref{eq:Martingale}, a $\bb{P}^\circ$-local martingale $M^V$ exists so that under $\bb{P}^\circ$
		\[ V(t, X_t) = V\left(t_0, X_{t_0}\right) + M_t^V + \int_{t_0}^t \scr{A}^\circ V(s, X_s)\dd s. \]
		Since $V$ is nonnegative and $\scr{A}^\circ V\leq 0$, it follows that $M^V$ is bounded from below by $-V(t_0,X_{t_0})$. Hence, by \Cref{lem:supermartingale}, $M^V$ is a supermartingale. Moreover, it follows that $V(t,X_t)$ is integrable for all $t$ since
		\[ \bb{E}_t^\circ\abs{V(t,X_t)}=\bb{E}_t^\circ V(t,X_t)\leq \bb{E}_{t_0}^\circ V(t_0,X_{t_0})+\bb{E}_t^\circ M_t^V \]
		The supermartingale property follows as for $t_0\leq s\leq t<T$, 
		\[\begin{aligned}  \bb{E}^\circ_t \left(V(t,X_t) \mid \scr{F}_s\right)-V(s,X_s) = \bb{E}_t^\circ \left(M_t^V\mid\scr{F}_s\right) - M_s^V + \bb{E}_t^\circ \int_s^t \scr{A}^\circ V(u, X_u)\dd u \leq 0
			\end{aligned} \]
		 Doob's supermartingale inequality, see e.g.\ Theorem 1.3.6 of \cite{mao2008stochastic}, now states that for all $λ>0$, 
		\[ λ\bb{P}^\circ\left( \sup_{t_0\leq t<T} V(t, X_t)\geq λ\right) \leq \bb{E}_{t_0}^\circ V\left(t_0, X_{t_0}\right).  \]
		Hence, a random variable $C$  exists so that $\bb{P}^\circ$-almost surely $\sup_{t_0\leq t<T} V(t, X_t)\leq C$ where  $\bb{P}^\circ(C\geq λ)\leq λ^{-1}\bb{E}_{\bb{P}^\circ} V\left(t_0, X_{t_0}\right)$. 
	\end{proof}

	\section{Application 1: Discrete state-space processes}
	\label{sec:ChemicalReactions}
	
	Here we discuss two Markov processes that take values in a discrete state space. 
	
	\subsection{Inhomogeneous Poisson process}
	Let $X$ be an inhomogeneous Poisson process with state-dependent rate $λ$, that is 
	\begin{equation}
		\label{eq:ChemicalReactions-BehaviorPProcess}
		\bb{P}\left(X_{t+Δ}=x+1 \mid X_t=x\right) = λ(x)Δ+o(Δ).  
	\end{equation}
	The infinitesimal generator of $X$ is given by
	\begin{equation}\label{eq:ChemicalReactions-InhomPP-generator}
		\scr{L}f(x) = λ(x)\left(f(x+1)-f(x)\right).
	\end{equation}
	We assume $X_0=x_0$, fix $x_T> x_0$ and we consider the process $X^\star = \left(X\mid X_T=x_T\right)$. Moreover, we assume that $\lambda$ takes finite values on the set $\{x_0,x_0+1,\dots,x_T\}$. $X^\star$ is obtained from Doob's $h$-transform with $h(t,x) = \bb{P}(X_T=x_T\mid X_t=x)$. It follows from \eqref{eq:GPGenerator} that $X^\star$ is an inhomogeneous Poisson process with rate $λ^\star(t,x) = λ(x)\frac{h(t,x+1)}{h(t,x)}$ on $\{x_0,\dots,x_T-1\}$ and $0$ at $x=x_T$. Since $λ$ is state-dependent, the transition probabilities for $X$ are intractable and thus the exact form of the process $X^\star$ cannot be determined. We thus simulate a guided process with a homogeneous Poisson process with rate $\tilde{λ}$ as auxiliary process, i.e. 
	
	\begin{equation}
		\label{eq:ChemicalReactions-InhomPP-htilde}
		\tilde{h}(t,x) =\begin{cases} \frac{\left(\tilde{λ}(T-t)\right)^{x_T-x}}{(x_T-x)!}e^{-\tilde{λ}(T-t)},& (t,x)\in[0,T)\times\{x_0, \dots,x_T\}\\
			0 & \text{elsewhere}\end{cases}.
	\end{equation}
	
	\begin{prop}
		Both $h$ and $\tilde{h}$ are $S$-good functions for all $S<T$.
	\end{prop}
	\begin{proof}
		Let $S<T$. Since $h$ is a non-zero probability and $\scr{A}h=0$, it immediately follows from \Cref{prop:goodfunctions} that $h$ is a good function. It follows from a direct computation that 
		\[ \frac{\scr{A}\tilde{h}}{\tilde{h}}(t,x) = \left(λ(x)-\tilde{\lambda}\right)\left(\frac{x_T-x}{\tilde{λ}(T-t)}-1\right), \qquad (t,x)\in [0,S]\times\{x_0,\dots,x_T\} \]
		Hence, since $S<T$, both $\tilde{h}$ and $\frac{\scr{A}\tilde{h}}{\tilde{h}}$ are bounded measurable on $[0,S]\times \scr{S}$ and $\tilde{h}$ is an $S$-good function by \Cref{prop:goodfunctions}.
	\end{proof}
	
	\begin{thm}
		\label{thm:InhomPP-main}
		Let $X^\circ$ be the guided process induced by \eqref{eq:ChemicalReactions-InhomPP-htilde} and suppose that $\tilde{λ}\leq \min\{ λ(x) \colon x\in \{x_0,\dots, x_T \} \}$.  Then the laws of $X^\star$ and $X^\circ$ are equivalent on $[0,T]$. Moreover,
		\begin{equation}
			\label{eq:InhomPP-RadonNikodymDerivative}
			\der{\bb{P}^\star_T}{\bb{P}^\circ_T}(X) = \frac{\tilde{h}(0,x_0)}{h(0,x_0)} \exp\left( \int_0^T \left[ λ(X_s)-\tilde{λ}\right]\left[ \frac{x_T-X_s}{\tilde{\lambda}(T-t)}-1 \right]\dd s\right) .
		\end{equation}
	\end{thm}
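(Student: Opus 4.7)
The plan is to invoke \Cref{thm:MainResult} by verifying \Cref{ass:MainAssumption} with the scaling $\kappa\equiv 1$, which is natural because $X$ and the auxiliary homogeneous Poisson process live on the same finite state space $\{x_0,\ldots,x_T\}$. The guided rate under $\mathbb{P}^\circ$ is $\lambda^\circ(t,x)=\lambda(x)(x_T-x)/(\tilde\lambda(T-t))$, which blows up as $t\uparrow T$ whenever $x<x_T$, so heuristically $X^\circ$ reaches $x_T$ strictly before $T$ almost surely; the events $A_k(t)$ are chosen to make this quantitative through a Lyapunov function.

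Concretely, I would take $V(t,x)=(x_T-x)/(T-t)$ and $A_k(t)=\{\sup_{t_0\leq s\leq t}V(s,X_s)\leq k\}$. Using $\partial_t V=(x_T-x)(T-t)^{-2}$ and $V(t,x+1)-V(t,x)=-1/(T-t)$, a short calculation gives
\[ \mathcal{A}^\circ V(t,x)=\frac{x_T-x}{(T-t)^2}\left(1-\frac{\lambda(x)}{\tilde\lambda}\right)\leq 0\qquad (x<x_T), \]
with $\mathcal{A}^\circ V(t,x_T)=0$; the sign uses exactly the hypothesis $\tilde\lambda\leq\min_x\lambda(x)$. \Cref{lem:LyapunovFunction} then gives \ref{ass:EquivalenceAssumption-EventSequence1}, and \Cref{rem:DoobSupermartingaleInequality} gives $\mathbb{P}^\circ_T(A(T))=1$, so the indicator can be removed from the Radon-Nikodym derivative. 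For \ref{ass:AlmostSureBoundOnPsi}, on $A_k(t)$ we have $(x_T-X_s)/(T-s)\leq k$ and $|\lambda(X_s)-\tilde\lambda|\leq C:=\max_x\lambda(x)-\tilde\lambda$, so
\[ |\log\Psi_t(X)|\leq CT(k/\tilde\lambda+1) \]
uniformly in $t<T$.

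Conditions \ref{ass:LimitForF} and \ref{ass:TransitionDensityBound} are routine with $\nu$ the counting measure on $\{x_0,\ldots,x_T\}$: pointwise $\tilde h(t,y)\to\ind\{y=x_T\}$ and $\tilde h\leq 1$, so dominated convergence yields $h^\sharp(t;s,x)\to p(s,x;T,x_T)=h(s,x)$, while $h^\sharp(t;s,X_s)\leq\sum_y p(s,X_s;t,y)=1$.

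The main obstacle is \ref{ass:EquivalenceAssumption2Limitfraction}. Under $\mathbb{P}^\star$, $X^\star$ is a non-decreasing pure-jump process on the finite set $\{x_0,\ldots,x_T\}$ with $X^\star_T=x_T$ a.s., and because it has only finitely many jumps on $[0,T]$ its hitting time $\tau^\star$ of $x_T$ satisfies $\tau^\star<T$ a.s.; hence $X_t\to x_T$ and $\tilde h(t,X_t)/h(t,X_t)\to 1$ almost surely. For the uniform domination needed for bounded convergence, I would lower-bound $h(t,x)$ by restricting to paths whose $m:=x_T-x$ jumps all fall in $[t,T]$, giving $h(t,x)\geq(\lambda_{\min})^m(T-t)^m/m!\cdot e^{-\lambda_{\max}(T-t)}$ with $\lambda_{\min}=\min_x\lambda(x)>0$; combined with the explicit form of $\tilde h$,
\[ \frac{\tilde h(t,x)}{h(t,x)}\leq\left(\frac{\tilde\lambda}{\lambda_{\min}}\right)^m e^{(\lambda_{\max}-\tilde\lambda)T} \]
uniformly in $(t,x)\in[0,T)\times\{x_0,\ldots,x_T\}$. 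Bounded convergence then yields $\lim_{t\uparrow T}\mathbb{E}^\star_t[\tilde h(t,X_t)/h(t,X_t)\ind_{A_k(t)}]=\mathbb{P}^\star_T(A_k(T))$, and monotone convergence in $k$ finishes the verification. With \Cref{ass:MainAssumption} in hand, \Cref{thm:MainResult} delivers the equivalence and the formula \eqref{eq:InhomPP-RadonNikodymDerivative}.
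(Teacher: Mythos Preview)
Your proof is correct and follows essentially the same route as the paper: choose $\kappa\equiv 1$, take $V(t,x)=(x_T-x)/(T-t)$ (the paper uses the harmless rescaling $V(t,x)=(x_T-x)/(\tilde\lambda(T-t))$), verify $\scr{A}^\circ V\le 0$ using the hypothesis $\tilde\lambda\le\min_x\lambda(x)$, and check the remaining parts of \Cref{ass:MainAssumption} directly before invoking \Cref{thm:MainResult}.

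The one place where your argument differs usefully is the verification of \ref{ass:EquivalenceAssumption2Limitfraction}. The paper obtains only a local estimate $h(t,x)=\lambda(x)(T-t)+o(T-t)$ for $x<x_T$ and therefore has to split $A_k(t)$ into $A_k(T)$ and $A_k(t)\setminus A_k(T)$, bounding the ratio $\tilde h/h$ separately on each piece. Your global lower bound $h(t,x)\ge \lambda_{\min}^m (T-t)^m e^{-\lambda_{\max}(T-t)}/m!$, obtained by integrating the explicit jump-time density over the simplex, gives a uniform bound $\tilde h(t,x)/h(t,x)\le (\tilde\lambda/\lambda_{\min})^{x_T-x}e^{(\lambda_{\max}-\tilde\lambda)T}$ on all of $[0,T)\times\{x_0,\dots,x_T\}$, so a single application of bounded convergence suffices. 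This is a small but genuine simplification.
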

	
	\begin{proof}
		Set $κ(t)=1$, $t_0=0$ and define $\{A_k(t)\}_{k,t}$ as in \eqref{eq:EventsAkt} with
		\begin{equation}
			\label{eq:ChemicalReactions-InhomPP-V}
			V(t,x) =  \frac{x_T-x}{\tilde{λ}(T-t)},\qquad (t,x)\in[0,T)\times\{x_0, \dots, x_T\},
		\end{equation}
		so that \ref{ass:EquivalenceAssumption-EventSequence1} is satisfied. The result follows follows from an application of \Cref{thm:MainResult}, where the assumptions from \Cref{ass:MainAssumption} are satisfied via \Cref{thm:ChemicalReactions-InhomPP-LimitForF}, \Cref{thm:ChemicalReactions-InHomPP-FractionLimit} and \Cref{thm:ChemicalReactions-HomPP-PsiBounded}. The form of the Radon-Nikodym derivative is obtained via \eqref{rem:AlternateFormPsi} and upon noting that $\bb{P}^\circ_T( A(T))=1$ by \Cref{rem:DoobSupermartingaleInequality} and \Cref{lem:ChemicalReactions-HomPP-AcircVnegative}. The latter also implies equivalence. 
	\end{proof}
	
	In the remainder of this section, we prove the results used in the proof of \Cref{thm:InhomPP-main} and we thus assume the conditions stated in this theorem are satisfied and take $\kappa$, $t_0$, $V$ and $\{A_k(t)\}_{k,t}$ as stated in the proof. 
	
	\begin{lem}
		\label{thm:ChemicalReactions-InhomPP-LimitForF}
		Assumptions \ref{ass:LimitForF} and \ref{ass:TransitionDensityBound} are satisfied. 
	\end{lem}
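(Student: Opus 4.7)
The plan is to exploit the fact that in this discrete Poisson setting the measure $\nu$ can be taken as counting measure on the effective state space $\{x_0,\dots,x_T\}$ (since $\tilde{h}(t,\cdot)$ is supported there), so that $h^\sharp(t;s,x)$ reduces to a finite sum
\[ h^\sharp(t;s,x) = \sum_{y=x_0}^{x_T} \tilde{h}(t,y)\, p(s,x;t,y), \]
with $\kappa\equiv 1$ and $p(s,x;t,y)=\mathbb{P}(X_t=y\mid X_s=x)$. This finiteness will trivialize both the limiting argument and the boundedness bound.

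For \ref{ass:LimitForF}, I would first read off from the explicit formula \eqref{eq:ChemicalReactions-InhomPP-htilde} that
\[ \lim_{t\uparrow T} \tilde{h}(t,y) = \ind_{\{y=x_T\}}, \]
since the factor $(\tilde\lambda(T-t))^{x_T-y}$ kills every term with $y<x_T$ while $e^{-\tilde\lambda(T-t)}\to 1$ recovers the indicator at $y=x_T$. Next, since $\lambda$ is assumed finite on $\{x_0,\dots,x_T\}$, the forward Kolmogorov system for the inhomogeneous Poisson process on this finite set is an ODE with continuous coefficients, so $t\mapsto p(s,x;t,y)$ is continuous on $[s,T]$. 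Because the sum over $y$ is finite, I can pass the limit under the sum and conclude
\[ \lim_{t\uparrow T} h^\sharp(t;s,x) = p(s,x;T,x_T) = h(s,x). \]

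For \ref{ass:TransitionDensityBound}, the key observation is that $\tilde{h}(t,y)$ is itself the transition probability $\mathbb{P}(\tilde{X}_T = x_T\mid \tilde{X}_t=y)$ of a homogeneous Poisson process, so $\tilde{h}(t,y)\in[0,1]$ uniformly in $t$ and $y$. Consequently
\[ h^\sharp(t;s,X_s) \leq \sum_{y=x_0}^{x_T} p(s,X_s;t,y) \leq 1 \]
deterministically, which is certainly a $\mathbb{P}$-almost sure bound.

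The only mildly nontrivial ingredient is the continuity of $p(s,x;t,y)$ in $t$ up to $T$, but this is automatic from the finite state space and the finiteness of $\lambda$ on it (the process has only finitely many possible jumps in $[s,T]$, each arriving at a continuously distributed time), so I do not anticipate any real obstacle.
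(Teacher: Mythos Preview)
Your proof is correct and follows essentially the same route as the paper: both write $h^\sharp(t;s,x)$ as the finite sum $\sum_{y=x_0}^{x_T}\tilde h(t,y)\,p(s,x;t,y)$ with $\kappa\equiv 1$, use that $\tilde h(t,y)\to\ind_{\{y=x_T\}}$ as $t\uparrow T$ (the paper does this by splitting off the $y=x_T$ summand), and bound $h^\sharp$ uniformly by $1$ since $\tilde h$ is itself a transition probability. Your explicit justification of the continuity of $t\mapsto p(s,x;t,y)$ via the finite-state Kolmogorov ODE is a detail the paper leaves implicit, but otherwise the arguments coincide.
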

	\begin{proof}
		First note that $X$ admits a transition density $p$ with respect to the counting measure. Let $s\in[0,T)$ and $y\in\{x_0, \dots, x_T\}$. Then
		\begin{align*}
			h^\sharp(t;s,x) &= 	\int κ(t)\tilde{h}(t,y)p(s,x;t,y)\dd\nu(y) \\
			&= \sum_{y=x_0}^v \frac{\left(\tilde{λ}(T-t)\right)^{x_T-y}}{(x_T-y)!}e^{-\tilde{λ}(T-t)} p(s,x;t,y) \\
			&= e^{-\tilde{λ}(T-t)}p(s,x;t,x_T)  +  \sum_{y=x_0}^{x_T-1} \frac{\left(\tilde{λ}(T-t)\right)^{x_T-y}}{(x_T-y)!}e^{-\tilde{λ}(T-t)} p(s,x;t,y) .
		\end{align*}
		As $t\uparrow T$, the first term on the right hand side tends to $p(s,x;T,x_T)=h(s,x)$, while the second term vanishes as $p$ is bounded by $1$. It can also be observed that $h^\sharp$ is uniformly bounded by $1$ and thus $h^\sharp(t;s,X_s)$ is clearly bounded. 
	\end{proof}
	
	\begin{lem}
		\label{thm:ChemicalReactions-InHomPP-FractionLimit}
		\[ \lim_{k\to\infty}\lim_{t\uparrow T} \bb{E}_t^\star \left( \frac{\tilde{h}(t, X_t)}{h(t, X_t)}\ind_{A_k(t)} \right) = \mathbb{P}^\star_T(A(T)). \]
	\end{lem}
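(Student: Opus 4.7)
The strategy is to exploit the structure of $A_k(t)$ in order to reduce the ratio $\tilde{h}(t,X_t)/h(t,X_t)$ to something tractable near $T$. The key structural observation: on $A_k(t)$ we have $V(s,X_s)\le k$ for every $s<t$, i.e. $x_T-X_s\le k\tilde\lambda(T-s)$. Letting $s\uparrow t$ and using that the Poisson paths are non-decreasing with $X$ taking values in $\{x_0,\ldots,x_T\}$ under $\bb{P}^\star$ (the rate at $x_T$ is $0$, so the process is absorbed there), one obtains $x_T-X_t\le k\tilde\lambda(T-t)$. Consequently, as soon as $T-t<1/(k\tilde\lambda)$ one must in fact have $X_t=x_T$ everywhere on $A_k(t)$.

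Next, on that event the ratio simplifies explicitly. By definition $\tilde{h}(t,x_T)=e^{-\tilde\lambda(T-t)}$, and by the transition structure of the inhomogeneous Poisson process starting from the absorbing level $x_T$, $h(t,x_T)=\bb{P}(X_T=x_T\mid X_t=x_T)=e^{-\lambda(x_T)(T-t)}$. Hence for $t$ sufficiently close to $T$,
\[
\frac{\tilde h(t,X_t)}{h(t,X_t)}\ind_{A_k(t)}= e^{(\lambda(x_T)-\tilde\lambda)(T-t)}\ind_{A_k(t)},
\]
which is uniformly bounded by $\exp\!\bigl(\abs{\lambda(x_T)-\tilde\lambda}/(k\tilde\lambda)\bigr)$ and converges pointwise to $\ind_{A_k(T)}$ as $t\uparrow T$, since $\ind_{A_k(t)}\downarrow\ind_{A_k(T)}$ by \ref{ass:EquivalenceAssumption-EventSequence1}.

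The remaining steps are then routine. Dominated convergence under $\bb{P}^\star$ yields $\lim_{t\uparrow T}\bb{E}^\star_t\bigl(\tilde h(t,X_t)/h(t,X_t)\,\ind_{A_k(t)}\bigr)=\mathbb{P}^\star(A_k(T))$. Finally, since $A_k(T)\uparrow A(T)$, continuity of $\mathbb{P}^\star$ from below gives $\lim_{k\to\infty}\mathbb{P}^\star(A_k(T))=\mathbb{P}^\star(A(T))$, which is the desired identity.

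\emph{The main obstacle}, and the reason the whole event-truncation mechanism is needed, is that the ratio $\tilde h/h$ is genuinely of indeterminate form $0/0$ as $t\uparrow T$ whenever $X_t<x_T$. The first step above is what avoids any delicate asymptotic analysis of how fast $h$ and $\tilde h$ vanish: restricting to $A_k(t)$ forces the process to have already reached its target $x_T$ before $T$, so only the simple explicit ratio at $x_T$ survives in the limit.
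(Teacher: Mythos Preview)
Your argument is correct, and in fact cleaner than the paper's. Both proofs hinge on the same structural point: on $A_k(t)$ the process must hit $x_T$ before time $T$, so the delicate $0/0$ behaviour of $\tilde h/h$ never comes into play. The difference lies in how this is exploited. The paper observes only the \emph{pointwise} statement that each $\omega\in A_k(T)$ has some $\varepsilon(\omega)>0$ with $X_t(\omega)=x_T$ for $t>T-\varepsilon(\omega)$; it then has to split $\ind_{A_k(t)}=\ind_{A_k(T)}+\ind_{A_k(t)\setminus A_k(T)}$ and separately bound the ratio on the second piece via the asymptotic $h(t,x)=\lambda(x)(T-t)+o(T-t)$ for $x<x_T$. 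You instead extract the \emph{uniform} bound $x_T-X_t\le k\tilde\lambda(T-t)$ on all of $A_k(t)$ by passing to the left limit and using integrality, which forces $X_t=x_T$ everywhere on $A_k(t)$ once $T-t<1/(k\tilde\lambda)$. This makes the decomposition and the asymptotic expansion of $h$ for $x<x_T$ unnecessary; only the explicit value $h(t,x_T)=e^{-\lambda(x_T)(T-t)}$ is needed, and the dominated convergence step becomes immediate. Your route buys simplicity; the paper's route is slightly more robust in that it would still apply in settings where one cannot force $X_t=x_T$ uniformly on $A_k(t)$ but only on $A_k(T)$.
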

	\begin{proof}
		We first show that for all $k$, $\frac{\tilde{h}(t, X_t)}{h(t, X_t)}\ind_{A_k(t)}\to \ind_{A_k(T)}$ in the $\bb{P}^\star$-almost sure sense as $t\uparrow T$. By \eqref{eq:EventsAkt} and \eqref{eq:ChemicalReactions-InhomPP-V}, 
		\[ A_k(t) = \left\{ \sup_{0\leq s<t} \frac{x_T-X_s}{\tilde{\lambda}(T-s)}\leq k \right\} \]
		Hence, for all trajectories $\omega\in A_k(T)$, we must have an $\varepsilon(\omega)>0$ such that $X_t^\star(\omega)=x_T$ for $t\in(T-\varepsilon(\omega),T]$. 
		
		By \eqref{eq:ChemicalReactions-BehaviorPProcess} for $t$ sufficiently close to $T$, 
		\[ h(t, x) = \begin{cases}
			λ(x)(T-t) + o(T-t), & x< x_T \\
			1-λ(x)(T-t) + o(T-t), & x=x_T
		\end{cases}. \]
		It thus follows that $\lim_{t\uparrow T}h(t,X_t^\star(\omega)) = 1$ for all $\omega\in A_k(T)$. Similarly, we deduce from \eqref{eq:ChemicalReactions-InhomPP-htilde} that also $\lim_{t\uparrow T}\tilde{h}(t,X_t^\star(\omega)) = 1$ for all $\omega\in A_k(T)$. Now 
		
		\[ \frac{\tilde{h}(t,X_t)}{h(t,X_t)}\ind_{A_k(t)} = \frac{\tilde{h}(t,X_t)}{h(t,X_t)}\ind_{A_k(T)}+\frac{\tilde{h}(t,X_t)}{h(t,X_t)}\ind_{A_k(t)\setminus A_k(T)} \]
		
		We proceed to show that $\frac{\tilde{h}(t,X_t)}{h(t,X_t)}$ is bounded on $A_k(t)\setminus A_k(T)$. Note
		\begin{equation} 
			\label{eq:InhomPP-Ak(t)/Ak(T)}
			A_k(t)\setminus A_k(T) = \left\{ \sup_{t\leq s<T} \frac{x_T-X_s}{\tilde{\lambda}(T-s)}>k \right\} 
		\end{equation}
		Hence, on $A_k(t)\setminus A_k(T)$, we must have $X_t<x_T$. Now
		\[ \frac{\tilde{h}(t,X_t)}{h(t,X _t)} = \frac{ \tilde{\lambda}^{x_T-X_t}(T-t)^{x_T-X_t-1}}{\left(\lambda(X_t) + \frac{o(T-t)}{T-t}\right)(x_T-X_t
			)!}e^{-\tilde{\lambda}(T-t)}  \] 
		Since, $x_T-X_t\geq 1$, this term is clearly bounded in $t$. By the preceding, under $\bb{P}^\star$, the first term tends to $\ind_{A_k(T)}$ as $t\uparrow T$, while the second term in \eqref{eq:InhomPP-Ak(t)/Ak(T)} tends to $0$. The preceding also implies that $\frac{\tilde{h}(t,X_t)}{h(t,X_t)}\ind_{A_k(t)}$ is bounded in $t$, and thus by dominated convergence,
		\[ \lim_{k\to\infty}\lim_{t\uparrow T} \bb{E}_t^\star \left( \frac{\tilde{h}(t, X_t)}{h(t, X_t)}\ind_{A_k(t)} \right) = \lim_{k\to\infty} \bb{E}_T^\star \ind_{A_k(T)} = \mathbb{P}^\star_T(A(T)),  \]
	 	where the last equality follows from monotone convergence
	\end{proof}
	
	\begin{lem}
		\label{thm:ChemicalReactions-HomPP-PsiBounded}
		For all $k$, $\Psi_t(X)\ind_{A_k(t)}$ is $\bb{P}^\circ$-almost surely uniformly bounded in $t$.
	\end{lem}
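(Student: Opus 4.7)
The plan is to exploit the explicit formula
\[\frac{\scr{A}\tilde{h}}{\tilde{h}}(s,x) = (\lambda(x)-\tilde{\lambda})\bigl(V(s,x)-1\bigr)\]
already derived when verifying that $\tilde{h}$ is an $S$-good function, and then combine a pointwise sign argument on $A_k(t)$ with a trivial time integration to bound the exponent of $\Psi_t$.

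First I would observe that under $\bb{P}^\circ$ the guided process $X^\circ$ is a pure-birth chain whose jump rate at $x$ equals $\lambda(x)\tilde{h}(t,x+1)/\tilde{h}(t,x)$ by \eqref{eq:GPGenerator}, and in particular vanishes at $x=x_T$ because $\tilde{h}(t,x_T+1)=0$ by \eqref{eq:ChemicalReactions-InhomPP-htilde}. Hence $X^\circ_s\in\{x_0,\dots,x_T\}$ for all $s<T$, $\bb{P}^\circ$-almost surely, so $\lambda(X^\circ_s)$ takes only finitely many values. Together with the hypothesis $\tilde{\lambda}\le\min\{\lambda(x):x_0\le x\le x_T\}$, this gives $0\le\lambda(X^\circ_s)-\tilde{\lambda}\le M$, where $M:=\max\{\lambda(x):x_0\le x\le x_T\}-\tilde{\lambda}<\infty$.

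Next I would split according to the sign of $V(s,X_s)-1$. By the definition \eqref{eq:EventsAkt}, on $A_k(t)$ one has $V(s,X_s)\le k$ for every $s\le t$, so $V(s,X_s)-1\le k-1$; and whenever $V(s,X_s)<1$ the product $(\lambda(X_s)-\tilde{\lambda})(V(s,X_s)-1)$ is non-positive since the first factor is non-negative. Consequently, on $A_k(t)$,
\[\frac{\scr{A}\tilde{h}}{\tilde{h}}(s,X_s)\le M(k-1)\le Mk\qquad\text{for every }s\le t.\]
Integrating over $[0,t]$ and exponentiating then yields $\Psi_t(X)\ind_{A_k(t)}\le\exp(MkT)$, a bound that is independent of $t\in[0,T)$, which is exactly the claimed uniform boundedness.

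I do not anticipate a serious obstacle. The only delicate point is the $\bb{P}^\circ$-almost sure confinement of $X^\circ$ to $\{x_0,\dots,x_T\}$, which is immediate from reading off the jump rate of $X^\circ$ via \eqref{eq:GPGenerator} applied to $\tilde{h}$ as in \eqref{eq:ChemicalReactions-InhomPP-htilde}. After that, the rest of the argument is a one-line sign analysis and a monotone estimate on the event $A_k(t)$.
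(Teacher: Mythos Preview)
Your argument is correct and follows essentially the same route as the paper: both proofs use the identity $\frac{\scr{A}\tilde h}{\tilde h}(s,x)=(\lambda(x)-\tilde\lambda)(V(s,x)-1)$, the non-negativity of $\lambda(X_s)-\tilde\lambda$, and the bound $V\le k$ on $A_k(t)$. The only difference is cosmetic: the paper bounds $\log\Psi_t(X)\ind_{A_k(t)}$ by the path-dependent quantity $(k-1)\int_0^T(\lambda(X_s)-\tilde\lambda)\dd s$, whereas you push one step further and replace $\lambda(X_s)-\tilde\lambda$ by its maximum $M$ to get the deterministic bound $\exp(MkT)$; your version also makes the confinement $X^\circ_s\in\{x_0,\dots,x_T\}$ explicit, which the paper leaves implicit.
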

	\begin{proof}
		Observe that for $x\in\{x_0,\dots,x_T-1\}$, 
		\[ V(t,x) = \frac{\tilde{h}(t,x+1)}{\tilde{h}(t,x)} \]
		Now note that, by \eqref{rem:AlternateFormPsi},
		\begin{align*}
			\log \Psi_t(X) &= \int_{0}^t \frac{\left(\scr{L}-\tilde{\scr{L}}\right)\tilde{h}}{\tilde{h}}(s, X_s)\dd s \\
			&= \int_0^t \frac{\lambda(X_s)-\tilde{\lambda}}{\tilde{h}(s, X_s)} \left( \tilde{h}(s, X_s + 1)-\tilde{h}(s, X_s) \right) \dd s \\
			&= \int_0^t \left(\lambda(X_s)-\tilde{\lambda}\right)\left( V(s, X_s) - 1\right) \dd s.
		\end{align*}
		Now  since $\lambda$ is nonnegative and $\tilde{\lambda}\leq\lambda(x)$ for all $x\in\{x_0,\dots,x_T\}$, we have that for all $t\in[0,T)$, \[\Psi_t(X)\ind_{A_k(t)} \leq \exp\left( (k-1)\int_0^t \left(\lambda(X_s)-\tilde{\lambda}  \right)\dd s \right)\leq \exp\left( (k-1)\int_0^T \left(\lambda(X_s)-\tilde{\lambda}  \right)\dd s \right), \]   
		where the last term is $\bb{P}^\circ$-almost surely integrable.
	\end{proof}
	
	\begin{lem}
		\label{lem:ChemicalReactions-HomPP-AcircVnegative}
		$\scr{A}^\circ V(t,x) \leq 0$ for all $t\in[0,T]$ and $x\in \{ x_0, \dots, x_T\}$. 
	\end{lem}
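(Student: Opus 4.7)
The plan is to compute $\scr{A}^\circ V$ directly from formula \eqref{eq:GPGenerator} and then check non-positivity by inspection. First I would simplify $\scr{A}^\circ$ using $\scr{A}(f\tilde h) - f\scr{A}\tilde h$ for the Poisson generator \eqref{eq:ChemicalReactions-InhomPP-generator}: the time-derivative terms cancel, and the jump part reduces to
\begin{equation*}
\scr{A}^\circ f(t,x) = \pder{f}{t}(t,x) + λ(x)\frac{\tilde h(t,x+1)}{\tilde h(t,x)}\bigl(f(t,x+1)-f(t,x)\bigr)
\end{equation*}
for $x\in\{x_0,\dots,x_T-1\}$, while at $x=x_T$ the rate vanishes because $\tilde h(t,x_T+1)=0$ by \eqref{eq:ChemicalReactions-InhomPP-htilde}. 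Thus $X^\circ$ is again a pure-birth process with state-and-time-dependent rate $λ(x)\tilde h(t,x+1)/\tilde h(t,x)$, absorbing at $x_T$.

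The next step is an observation that makes the calculation essentially trivial: from the explicit expression for $\tilde h$ in \eqref{eq:ChemicalReactions-InhomPP-htilde}, the ratio that appears in $\scr{A}^\circ$ coincides with $V$ itself, namely
\begin{equation*}
\frac{\tilde h(t,x+1)}{\tilde h(t,x)} = \frac{x_T-x}{\tilde{λ}(T-t)} = V(t,x).
\end{equation*}
I would then compute the three ingredients separately: $\partial_t V(t,x) = (x_T-x)/\bigl(\tilde{λ}(T-t)^2\bigr)$ and $V(t,x+1)-V(t,x) = -1/\bigl(\tilde{λ}(T-t)\bigr)$, so
\begin{equation*}
\scr{A}^\circ V(t,x) = \frac{x_T-x}{\tilde{λ}(T-t)^2} - \frac{λ(x)\,(x_T-x)}{\tilde{λ}^2 (T-t)^2} = \frac{x_T-x}{\tilde{λ}^2(T-t)^2}\bigl(\tilde{λ} - λ(x)\bigr).
\end{equation*}

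To conclude I would use the two sign facts available to me: $x_T-x\ge 0$ on the state space $\{x_0,\dots,x_T\}$, and $\tilde{λ}\le λ(x)$ by the standing hypothesis of \Cref{thm:InhomPP-main}. The boundary case $x=x_T$ needs only the remark that both the $\partial_t V$-term and the jump-term vanish there (the former because $x_T-x=0$, the latter because $λ^\circ(t,x_T)=0$), so $\scr{A}^\circ V(t,x_T)=0$. There is no real obstacle here beyond being careful that the formal derivation of the guided generator as a pure-birth generator is valid also at the absorbing state; once that is checked, the claim drops out of the algebra above.
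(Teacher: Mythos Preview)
Your proof is correct and follows essentially the same direct computation as the paper: identify the guided generator as a pure-birth generator with rate $\lambda(x)\tilde h(t,x+1)/\tilde h(t,x)$, plug in $V$, and use $x_T-x\ge 0$ together with $\tilde\lambda\le\lambda(x)$. Your handling of the absorbing state $x=x_T$ is also in line with the paper's treatment.
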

	\begin{proof}
		The result follows from a direct computation. $A^\circ(t,x_T)=0$ and for $x\in \{x_0,\dots,x_T-1\}$,
		\begin{align*}
			\scr{A}^\circ V(t,x) &= \pder{V}{t}(t,x) + \lambda^\circ(t,x)\left[V(t,x+1)-V(t,x)\right] \\
			&= \pder{V}{t}(t,x) + \lambda(x)\frac{\tilde{h}(t, x+1)}{\tilde{h}(t,x)}\left[V(t,x+1)-V(t,x)\right] \\
			&= \frac{x_T-x}{\tilde{\lambda}(T-t)^2} - \lambda(x)\frac{x_T-x}{\left[\tilde{\lambda}(T-t)\right]^2} = \frac{x_T-x}{\tilde{\lambda}(T-t)}\left(1-\frac{\lambda(x)}{\tilde{\lambda}}\right) \leq 0.
		\end{align*}
	\end{proof}

	\subsection{Jump process on a Delaunay triangulation}
	\label{sec:Delauney}
	Here, we consider a toy example to model electric flow through a city. In \cite{doyle2000}, electricity is modeled as a random walk on a discrete grid and here we slightly alter this model by considering jump processes on a triangulation of a random set of points in the plane $\bb{R}^2$. The network is constructed by first sampling points in the plane according to a planar Poisson process, followed by adding connections according to a Delaunay triangulation (we recap its definition below). We assume that at a given location electricity moves to a randomly chosen neighbour. We condition the model by a starting point and a final point on the grid and model the flow of electricity between the two points. 
	
	\begin{defn}[Voronoi diagram and Delaunay triangulation]
		Let $P$ be a set of points in $\bb{R}^d$. The Voronoi diagram associated with $P$ is the collection of Voronoi cells 
		\[ \scr{V}_P(x) := \left\{ y\in\bb{R}^d \colon \abs{y-x} \leq \abs{y-z}, \text{ for all }z\in P  \right\},\qquad x\in P. \]
		The Delaunay triangulation $D(P)$ of $P$ is the dual graph of the Voronoi diagram. It has $P$ as vertex set and there is an edge between $x$ and $y$ in $D(P)$ if $\scr{V}_P(x)$ and $\scr{V}_P(y)$ share a $(d-1)$-dimensional face. An example of a Delaunay traingulation and a Voronoi diagram can be found in \Cref{fig:VoronoiDelaunay_example}.
	\end{defn}

	\begin{figure}[h]
		\includegraphics[width = 0.45\textwidth]{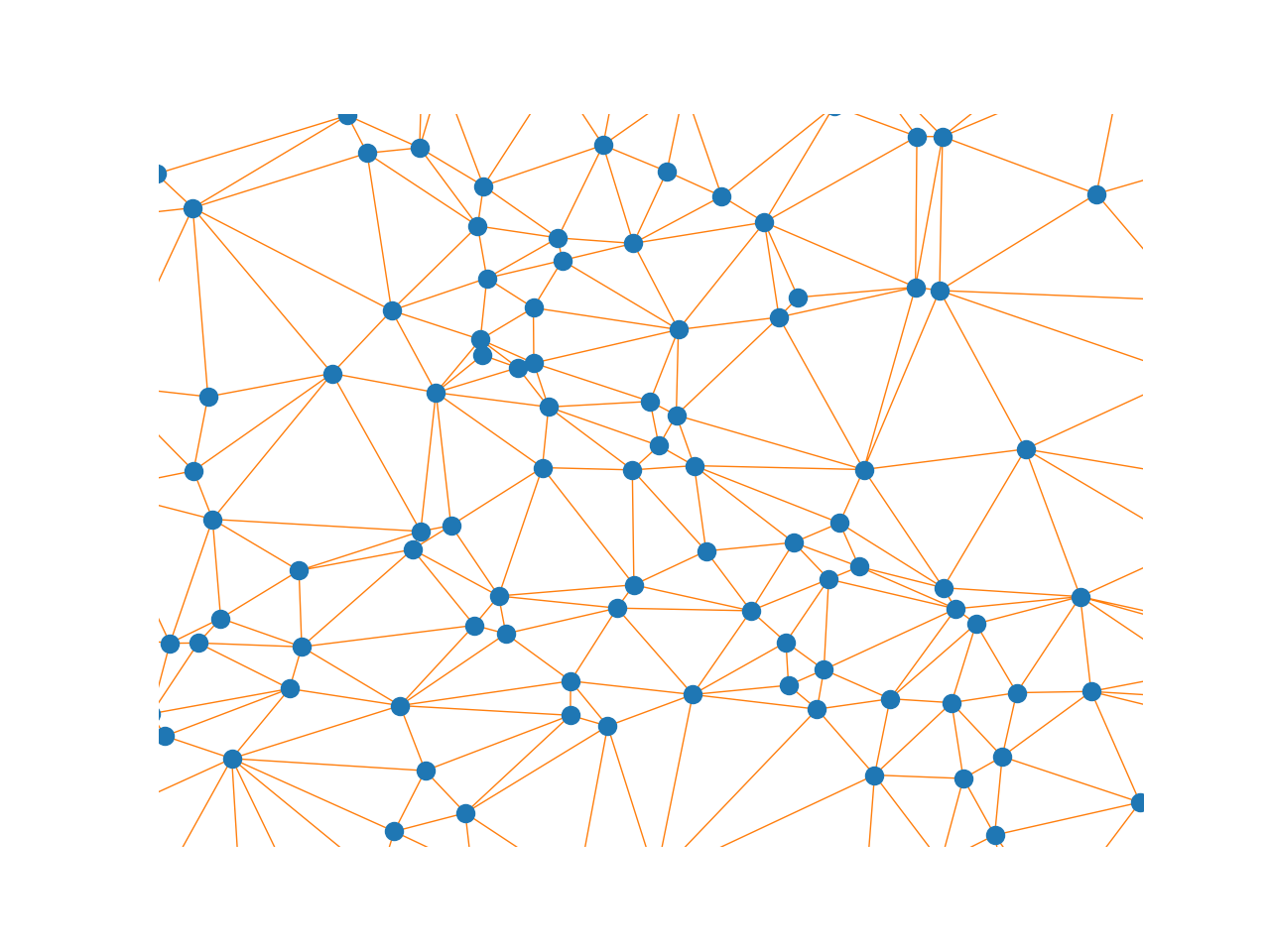}
		\includegraphics[width = 0.45\textwidth]{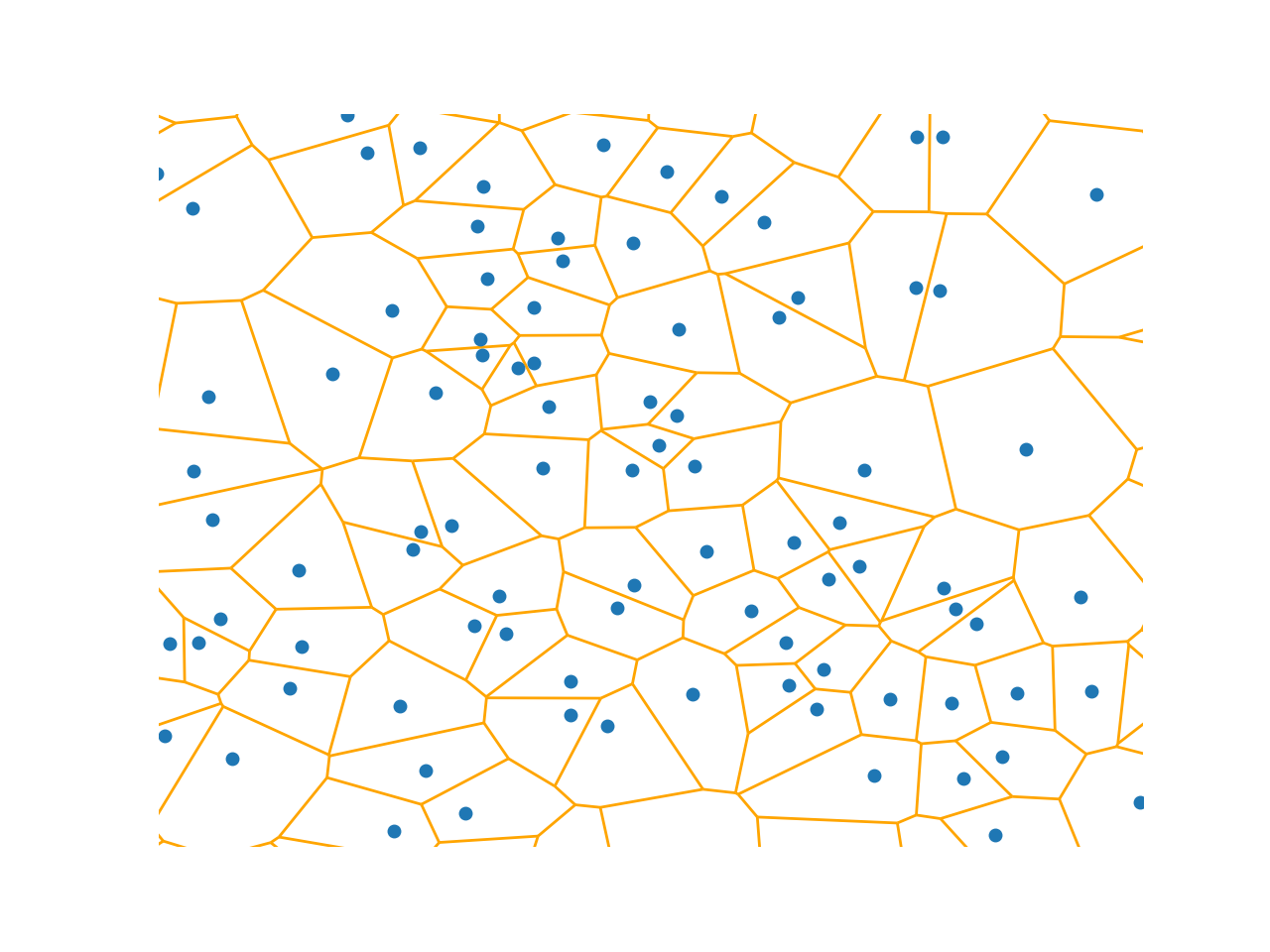}
		\caption{The blue points indicate a realization of a unit rate planar Poisson process with (left) the Delaunay triangulation and (right) the Voronoi diagram of the set.}
		\label{fig:VoronoiDelaunay_example}
	\end{figure}
	
	In this section, we consider a realization $ξ$  of a unit rate planar Poisson process that satisfies the following properties: 
	\begin{itemize}
		\item for all compact $A\subseteq \bb{R}^2$, the set $A\cap\xi$ is finite.
		\item There is a constant $K$ such that for any pair $x,y$ of vertices in $D(\xi)$ satisfies $\abs{y-x}\leq K$.
	\end{itemize}
 	Note that properties are events of probability $1$.

	 Denote the Delaunay triangulation of $\xi$ by $D(ξ)$. Consider a unit rate jump process $X = \{X_t\}_{t\geq 0}$ on $D(ξ)$ defined through the generator 
	\begin{equation}
		\label{eq:Delaunay-generator}
		\scr{L}f(x) = \sum_{y\in N_ξ(x)}(f(y)-f(x)),\qquad x\in ξ,
	\end{equation}
	where $N_ξ(x)$ denotes the set of neighbors to $x\in ξ$ in $D(ξ)$. For fixed $x_T\in ξ$ and $T>0$, we consider the process conditioned $X^\star = \left(X\mid X_T=x_T\right)$ induced by $h(t,x)=\bb{P}\left(X_T=x_T\mid X_t=x\right)$. By Theorem 1 of \cite{Rousselle2015}, the time-changed scaled process $X^ε = \{εX_{t/ε^2}\}_{t\geq 0}$ converges in law to a scaled Brownian motion as $ε\downarrow 0$. Moreover, the scale parameter $\tilde \sigma > 0$ of the Brownian motion does not depend on the realization of $ξ$, but it does depend on the rate of $\xi$. We thus have an immediate candidate for $\tilde{h}$ in 
	\begin{equation}
		\label{eq:Delaunay-htilde}
		\begin{gathered}
			\tilde{h}(t,x) = η(t)\exp\left( -H(t,x)\right),\quad\text{with} \\
			η(t) = \frac{1}{2π{\tilde{a}(T-t)}}\qquad\text{and}\qquad H(t,x) = \frac{\abs{x_T-x}^2}{2\tilde{a}(T-t)},
		\end{gathered}
	\end{equation}
	with $\tilde a = \tilde \sigma^2$.
	By \eqref{eq:GPGenerator}, the guided process $X^\circ$ induced by $\tilde{h}$ is generated by 
	\begin{equation}
		\label{eq:Delaunay-Lcirc}
		\scr{L}^\circ f(x) = \frac{1}{\tilde{h}(t,x)} \left[ \scr{L}\left(\tilde{h}f\right)(t,x)-f(x)\scr{L}\tilde{h}(t,x)\right] = \sum_{y\in N_ξ(x)} \frac{\tilde{h}(t,y)}{\tilde{h}(t,x)}(f(y)-f(x)).
	\end{equation}
	Hence the guided process is a jump process on $D(ξ)$ with state-dependent jump rates. The rate of jumping from $x$ to $y\in N_\xi(x)$ is given by
	
	\begin{equation}
		\label{eq:Delaunay-jumprates}
		\frac{\tilde{h}(t,y)}{\tilde{h}(t,x)} = \exp\left( - \frac{\abs{x_T-y}^2-\abs{x_T-x}^2}{2\tilde{a}(T-t)}\right)
	\end{equation}

	\begin{prop}
		Both $h$ and $\tilde{h}$ are $S$-good functions for all $S<T$.
	\end{prop}
	\begin{proof}
	
	Let $S<T$. $h$ is defined as a non-zero probability and $\scr{A}h=0$ and thus $h$ is $S$-good by \Cref{prop:goodfunctions}. Since $\scr{A}$ solves the martingale problem for the process $X$ under $\mathbb{P}$, it follows from Lemma 3.1 of \cite{palmowski2002} that $D_t^{\tilde{h}}$ is a $\bb{P}$-local martingale. We proceed to show that $D_t^{\tilde{h}}$ is $\mathbb{P}$-almost surely bounded on $[0,S]$, so that the result follows from Theorem 47 of \cite{protter1990}.
	 direct computation yields 
	\begin{equation}
		\label{eq:Delaunay-Ah/h}
		\frac{\scr{A}\tilde{h}}{\tilde{h}}(t,x) = \frac{\scr{L}\tilde{h}}{\tilde{h}}(t,x)+\der{\log\eta}{t} - \pder{H}{t} =  \sum_{y\in N_ξ(x)}\left( \frac{\tilde{h}(t,y)}{\tilde{h}(t, x)} -1\right) +\der{\log\eta}{t} - \pder{H}{t}
	\end{equation}
	
	Using \eqref{eq:Delaunay-Ah/h}, we obtain
	\[ D_t^{\tilde{h}} = \exp\left( \int_0^t \pder{H}{s}(s,X_s)\dd s - H(t,X_t)+H(0,x_0)-\int_0^t \sum_{y\in N_\xi(X_s)} \left( \frac{\tilde{h}(s,y)}{\tilde{h}(s,X_s)} -1\right)\dd s  \right) \]
	Notice that 
	\[ \int_0^t \pder{H}{s}(s,X_s)\dd s - H(t,X_t) = \int_0^t \frac{\abs{x_T-X_s}^2}{2\tilde{a}(T-s)^2}\dd s - \frac{\abs{x_T-X_t}^2}{2\tilde{a}(T-t)}  \]
	is $\bb{P}$-almost surely bounded on $[0,S]$. It remains to be shown that 
	\[ \sum_{y\in N_\xi(x)}\frac{\tilde{h}(t,y)}{\tilde{h}(t,x)} = \exp\left( -\frac{1}{2\tilde{a}(T-t)}\left[ \abs{x_T-y}^2-\abs{x_T-x}^2\right]  \right) \]
	is bounded in $x$. For any $x\in\xi$ and $y\in N_\xi(x)$, $\abs{x_T-y}^2-\abs{x_T-x}^2= \inner{2x_T}{x-y}+\abs{y}^2-\abs{x}^2$. Since, $y\in N_\xi(x)$, if follows from Cauchy-Schwartz that $\inner{2x_T}{x-y}\leq 2\abs{x_T}K$. Moreover, since the function $(a,b)\mapsto b^2-a^2$ is bounded on any compact disc and $\abs{\abs{y}-\abs{x}}\leq\abs{y-x}\leq K$, $\abs{y}^2-\abs{x}^2$ is bounded as well. 
	\end{proof}

	\begin{thm}
		\label{thm:Delauney-main}
		The guided process induced by \eqref{eq:Delaunay-htilde} is equivalent to $X^\star = \left(X\mid X_T=x_T\right)$ on $[0,T]$. Moreover, 
		\begin{equation*}
			\label{eq:Delaunay-RadonNikodymDerivative}
			\der{\bb{P}^\star_T}{\bb{P}_T^\circ}(X) = 2\pi\tilde{a}T\frac{\tilde{h}(0,x_0)}{h(0,x_0)}\exp\left( \int_0^T \left[ \sum_{y\in N_ξ(X_s)}\left( \frac{\tilde{h}(s,y)}{\tilde{h}(s, X_s)} -1\right) - \frac{\abs{x_T-X_s}^2}{2\tilde{a}(T-s)^2}\right]\dd s  \right).
		\end{equation*}
	\end{thm}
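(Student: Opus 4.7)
I would prove \Cref{thm:Delauney-main} by verifying \Cref{ass:MainAssumption} and invoking \Cref{thm:MainResult}, following the blueprint of \Cref{thm:InhomPP-main}. Because the Gaussian normalising factor $\eta(t)=1/(2\pi\tilde{a}(T-t))$ in $\tilde{h}$ blows up at $T$, the right choice of scaling is $\kappa(t)=2\pi\tilde{a}(T-t)$, which turns $\kappa(t)\tilde{h}(t,y)$ into the bounded quantity $e^{-H(t,y)}\in(0,1]$. For the Lyapunov function I would take $V(t,x)=H(t,x)=|x_T-x|^2/(2\tilde{a}(T-t))$ and define $A_k(t)=\{\sup_{s<t}V(s,X_s)\le k\}$ as in \eqref{eq:EventsAkt}, so that \ref{ass:EquivalenceAssumption-EventSequence1} is automatic via \Cref{lem:LyapunovFunction}. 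The Radon--Nikodym formula will then drop out of \eqref{eq:HalfOpenInterval} once $\Psi_T^\kappa$ is identified, the factor $2\pi\tilde{a}T=\kappa(0)$ arising precisely from $\kappa(t)\eta(t)^{-1}\cdot\eta(0)$.

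\textbf{The tractable conditions.} For \ref{ass:LimitForF} and \ref{ass:TransitionDensityBound}, since $X$ lives on the countable vertex set $\xi$,
\[ h^\sharp(t;s,x)=\sum_{y\in\xi} e^{-H(t,y)}\,p(s,x;t,y). \]
As $t\uparrow T$ only the term $y=x_T$ survives (its coefficient equals $1$ and $p(s,x;t,x_T)\to p(s,x;T,x_T)=h(s,x)$), while the tail $\sum_{y\neq x_T}e^{-H(t,y)}p(s,x;t,y)$ vanishes by dominated convergence against $\sum_y p(s,x;t,y)=1$; the same inequality yields the uniform bound $h^\sharp\le 1$.

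\textbf{The delicate conditions.} For \ref{ass:EquivalenceAssumption2Limitfraction} the decisive observation is that on $A_k(T)$ the bound $|x_T-X_s|^2\le 2\tilde{a}k(T-s)$ combined with the local finiteness of $\xi$ around $x_T$ forces $X_s=x_T$ for all $s$ in a (path-dependent) left neighbourhood of $T$; at $x=x_T$ one has $\kappa(t)\tilde{h}(t,x_T)/h(t,x_T)=1/h(t,x_T)\to 1$, and dominated followed by monotone convergence give the claim. For \ref{ass:AlmostSureBoundOnPsi}, using \eqref{eq:Delaunay-Ah/h} together with $\tfrac{d}{dt}\log\eta(t)=1/(T-t)$, one arrives at
\[ \Psi_t(X)\kappa(t) = 2\pi\tilde{a}T\,\exp\left(\int_0^t \sum_{y\in N_\xi(X_s)}\!\left(\tfrac{\tilde{h}(s,y)}{\tilde{h}(s,X_s)}-1\right)ds - \int_0^t \tfrac{|x_T-X_s|^2}{2\tilde{a}(T-s)^2}ds\right). \]
The second integral contributes a factor $\le 1$. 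For the first I would split the time interval at some $T-\delta_k$: on $[T-\delta_k,t)$ the process is pinned at $x_T$ (by the forcing argument above) and the summed rates obey $\sum_y e^{-|x_T-y|^2/(2\tilde{a}(T-s))}\le|N_\xi(x_T)|e^{-d^2/(2\tilde{a}(T-s))}$ with $d>0$ the minimum distance from $x_T$ to its neighbours, which is integrable near $T$; on $[0,T-\delta_k]$ the confinement $|X_s-x_T|\le\sqrt{2\tilde{a}Tk}$ and the edge-length bound $K$ leave only finitely many vertices of $\xi$ in play, giving a uniform bound on the integrand.

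\textbf{Main obstacle.} The subtlest step is the upgrade to equivalence, i.e.\ $\mathbb{P}^\circ_T(A(T))=1$. The naive choice $V=H$ does \emph{not} satisfy $\scr{A}^\circ V\le 0$: a neighbour $y$ strictly closer to $x_T$ than $x$ contributes a term $u e^{-u}$ with $u<0$ of unboundedly negative value, so \Cref{rem:DoobSupermartingaleInequality} cannot be applied off the shelf. I would instead argue by hand that under $\mathbb{P}^\circ$ the process almost surely hits $x_T$ before $T$ (the guiding rates bias jumps towards $x_T$ with strength of order $(T-t)^{-1}$) and, once at $x_T$, the integrated escape rate $\int_{\tau_{x_T}}^T\sum_{y\in N_\xi(x_T)}\tilde{h}(s,y)/\tilde{h}(s,x_T)\,ds$ is finite, so only finitely many further jumps occur and the process is ultimately absorbed at $x_T$. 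On such paths $V(t,X_t)$ is bounded on $[0,T)$, yielding $\mathbb{P}^\circ_T(A(T))=1$ and the claimed equivalence via \Cref{thm:MainResult}.
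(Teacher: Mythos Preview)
Your proof follows the same overall framework as the paper---set $\kappa(t)=1/\eta(t)=2\pi\tilde a(T-t)$, choose a Lyapunov function $V$ to define the events $A_k(t)$, verify \Cref{ass:MainAssumption}, and then upgrade to equivalence by proving $\bb{P}^\circ(X_T=x_T)=1$. The genuine difference is your choice of $V$: you take $V=H$, whereas the paper takes
\[
V(t,x)=\sum_{y\in N_\xi(x)}\frac{\tilde h(t,y)}{\tilde h(t,x)}.
\]
Both choices force $X_s=x_T$ in a left neighbourhood of $T$ on $A_k(T)$, so \ref{ass:EquivalenceAssumption2Limitfraction} goes through the same way. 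The pay-off of the paper's choice is that \ref{ass:AlmostSureBoundOnPsi} becomes immediate: since $\log\Psi_t\kappa(t)\le -\log\eta(0)+\int_0^t V(s,X_s)\,\mathrm{d}s$ with this $V$, on $A_k(t)$ the bound $\Psi_t\kappa(t)\le e^{kT-\log\eta(0)}$ falls out in one line. Your two-piece argument (pin at $x_T$ near $T$; finitely many vertices in play on $[0,T-\delta_k]$) is correct but does more work for the same conclusion.

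One remark on your ``main obstacle'' paragraph: your stated reason for $\scr A^\circ H\not\le 0$ is backwards. A neighbour $y$ closer to $x_T$ gives $u=H(t,y)-H(t,x)<0$, so $u e^{-u}<0$ and this \emph{helps} the supermartingale inequality. The place where $\scr A^\circ H>0$ is at $x=x_T$ itself: there $\partial_t H=0$ but every neighbour has $u_y>0$, so $\scr A^\circ H(t,x_T)=\sum_y u_y e^{-u_y}>0$. Your conclusion that \Cref{rem:DoobSupermartingaleInequality} does not apply is right, just for a different reason. Your fallback---show directly that $X^\circ$ a.s.\ hits $x_T$ before $T$ and is eventually absorbed there because the integrated escape rate from $x_T$ is finite---is exactly the route the paper takes (its \Cref{thm:Delaunay-XTtov} and \Cref{thm:Delaunay-GuidedProcessToEnd}), though the paper phrases the hitting step via \Cref{prop:Delaunay-GreedyWorks} and an explicit tail bound on the first improvement time rather than a vague ``bias of order $(T-t)^{-1}$''; you would need that level of detail to make the step rigorous.
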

	
	\begin{ex}\label{ex:Delaunay}
		\Cref{fig:Delaunay} demonstrates an application of \Cref{thm:Delauney-main}. 
		We simulated a Poisson-Delaunay grid with intensity 5000 and chose as starting point the vertex closest to coordinates $(1.25,1.25)$. We considered the problem of conditioning on ending in the vertex with coordinates approximately equal to $(1.75,1.75)$ at time $T=30$. Although the scale $\tilde a$ is not affecting the validity of \Cref{thm:Delauney-main}, a good choice can help to improve the quality of the samples. That is, a wrong choice can lead to samples arriving at the endpoint too early or too late and, if a Metropolis-Hastings sampling algorithm is used to sample from the Radon-Nikodym derivative in \Cref{thm:Delauney-main}, the acceptance rate suffers. Here, we estimated $\tilde a\approx 0.13$ by generating a long trajectory of the jump process and computing its quadratic variation.  
			\begin{figure}[h!]
			\includegraphics[height=0.3\linewidth]{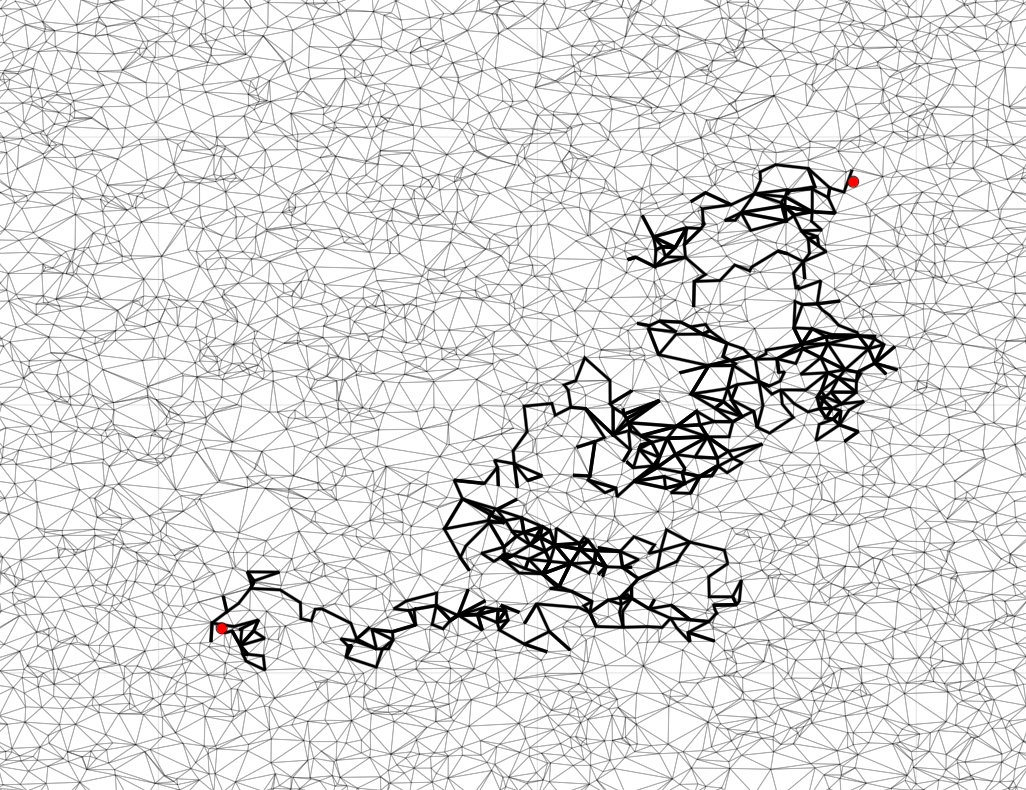}\includegraphics[height=0.3\linewidth]{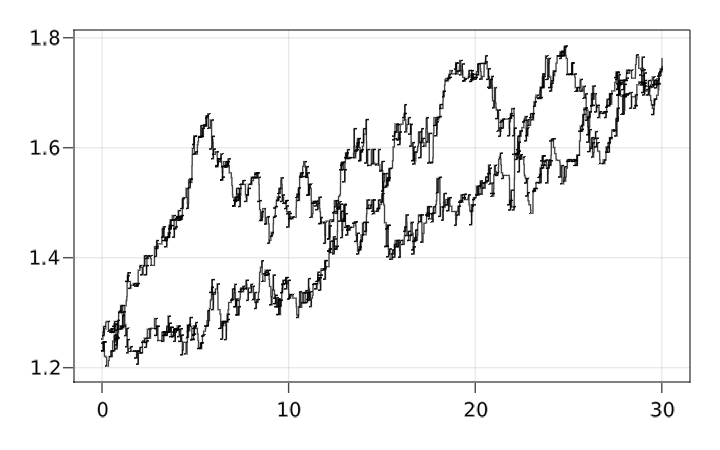}
			\caption{Illustration for \Cref{ex:Delaunay}. % on page  page~\pageref{ex:Delaunay}.
				Left: jump process bridge between two vertices in a random Poisson-Delaunay graph --  simulated using a guided process derived from the Brownian motion scaling limit of the random walk. Right: coordinate processes versus time corresponding to the same bridge realisation.}
			\label{fig:Delaunay}
		\end{figure}	
	\end{ex}

	For the proof of \Cref{thm:Delauney-main}, we need the following property of $D(\xi)$
	\begin{prop}
		\label{prop:Delaunay-GreedyWorks}
		Let $x,v\in ξ$ with $x\neq x_T$. Then there exists an $y\in N_ξ(x)$ so that $\abs{x_T-y}< \abs{x_T-x}$. 
	\end{prop}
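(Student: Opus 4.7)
The plan is to establish this greedy-routing property through the Voronoi dual of $D(\xi)$: the straight segment from $x$ to $x_T$ leaves the Voronoi cell $\scr{V}_\xi(x)$ through a face shared with some Delaunay neighbour $y$, and a short calculation will show that this $y$ is strictly closer to $x_T$ than $x$ is. Parametrise the segment by $\gamma(s)=x+s(x_T-x)$ for $s\in[0,1]$. Since Voronoi cells are closed convex, and $\gamma(0)=x\in\scr{V}_\xi(x)$ while $\gamma(1)=x_T\in\scr{V}_\xi(x_T)$ cannot also lie in $\scr{V}_\xi(x)$ (this would force $\abs{x_T-x}=\abs{x_T-x_T}=0$), the set $\{s\in[0,1]\colon\gamma(s)\in\scr{V}_\xi(x)\}$ equals $[0,s^\ast]$ for some $s^\ast\in(0,1)$. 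The point $p:=\gamma(s^\ast)$ lies on a face of $\scr{V}_\xi(x)$ shared with some cell $\scr{V}_\xi(y)$; by definition of the dual graph, $y\in N_\xi(x)$ and $\abs{p-x}=\abs{p-y}$.

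The rest is a two-line algebraic computation. The bisector identity $\abs{p-x}^2=\abs{p-y}^2$ expands to $2p\cdot(y-x)=\abs{y}^2-\abs{x}^2$; substituting $p=x+s^\ast(x_T-x)$ and simplifying yields
\[
s^\ast\inner{x_T-x}{y-x}=\tfrac{1}{2}\abs{y-x}^2,
\]
which in particular is strictly positive. Then write $x_T-y=(x_T-p)+(p-y)$ with $x_T-p=(1-s^\ast)(x_T-x)$ and $\abs{p-y}=\abs{p-x}=s^\ast\abs{x_T-x}$, expand $\abs{x_T-y}^2$, and substitute the displayed identity to obtain
\[
\abs{x_T-x}^2-\abs{x_T-y}^2=\frac{(1-s^\ast)\abs{y-x}^2}{s^\ast},
\]
which is strictly positive since $s^\ast\in(0,1)$ and $y\neq x$.

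The only mild subtlety, and the place where one has to be slightly careful, is what happens when $p$ is a vertex of the Voronoi diagram — i.e.\ sits on the boundary of several cells simultaneously. Then any neighbour $y'\in N_\xi(x)$ whose cell has $p$ on its boundary satisfies $\abs{p-x}=\abs{p-y'}$, so the identity and the computation above apply with that choice of $y'$; the conclusion is unaffected. The two assumptions on $\xi$ (local finiteness and bounded Delaunay edge lengths) play no role in the geometric argument itself; the proposition is a general feature of Delaunay triangulations of locally finite point sets in $\bb{R}^2$, and is what permits us later to interpret $V(t,x)$ along the guided process via a true Lyapunov-type decrease on each jump.
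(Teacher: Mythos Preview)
Your proof is correct and follows essentially the same approach as the paper: both locate the exit point $p$ of the segment $[x,x_T]$ from the Voronoi cell $\scr{V}_\xi(x)$ and take $y$ to be the Delaunay neighbour on the other side of that face. The paper obtains $\abs{x_T-y}\le\abs{x_T-x}$ directly from the triangle inequality (using $\abs{p-y}=\abs{p-x}$ and that $p$ lies on the segment) and then argues strictness separately via the isosceles-triangle observation, whereas you replace both steps by the single explicit identity $\abs{x_T-x}^2-\abs{x_T-y}^2=(1-s^\ast)\abs{y-x}^2/s^\ast$, which is cleaner and makes strictness immediate.
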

	\begin{proof}
		If $x_T\in N_ξ(x)$, set $y=x_T$. If $x_T\notin N_ξ(x)$ we denote the Voronoi cell of $x$ by $\scr{V}_ξ(x)$. Since $x$ is not on the boundary of $\scr{V}_ξ(x)$, there is an edge in the boundary of $\scr{V}_ξ(x)$ that intersects the line segment between $x_T$ and $x$. If two edges in $\scr{V}_\xi(x)$ meet on this line segment, we may choose either one of them. We choose $y$ as the point in the Voronoi cell adjecent to this boundary edge of $\scr{V}_\xi(x)$. By construction, $y\in N_ξ(x)$ and by the triangle inequality $\abs{x_T-y}< \abs{x_T-x}$. 
		
		Note that the inequality is strict, as two points $x,y$ with $\abs{x_T-x}=\abs{x_T-y}$ cannot have the Voronoi edge in between them also intersecting the prescribed line segment between $x$ and $x_T$ as the triangle with vertices $x,y,x_T$ is an isosceles triangle.
	\end{proof}
	
	\begin{proof}[Proof of \Cref{thm:Delauney-main}]
		We prove \Cref{thm:Delauney-main} via \Cref{thm:MainResult}. Set $t_0=0$, $κ(t)=1/η(t)$ and let 
		\begin{equation}\label{eq:Delaunay-V}
			V(t,x) = \sum_{y\in N_ξ(x)} \frac{\tilde{h}(t,y)}{\tilde{h}(t,x)} %= \sum_{y\in N_\xi(x)} \exp\left(-\frac{1}{2\tilde{a}(T-t)}\left[\abs{x_T-y}^2-\abs{x_T-x}^2\right]\right)  ,\qquad (t,x)\in[0,T)\times ξ.
		\end{equation} 
		As in \Cref{lem:LyapunovFunction}, define the events 
		\begin{equation} 
			\label{eq:Delaunay-Akt}
			A_k(t)  =  \left\{ \sup_{0\leq s<t} V(s,X_s)\leq k \right\}, 
		\end{equation}
		
		We first note that \ref{ass:EquivalenceAssumption-EventSequence1} is satisfied by construction. The remaining assumptions of \Cref{ass:MainAssumption} are satisfied by \Cref{thm:Delaunay-LimitForF}, \Cref{thm:ChemicalReactions-InHomPP-FractionLimit} and \Cref{thm:ChemicalReactions-HomPP-PsiBounded}. Finally, the equivalence and the form of the Radon-Nikodym derivative follows from a direct computation and upon noting that $\bb{P}^\circ_T( A(T))=1$ by \Cref{thm:Delaunay-GuidedProcessToEnd}. 
	\end{proof}
	
	In the remainder of this section, we prove the lemmas needed for \Cref{thm:Delauney-main}. We thus assume that the conditions are satisfied and set $t_0=0$ and $κ(t)=1/η(t)$ and choose the events $\{A_k(t)\}_{k,t}$ as in \eqref{eq:Delaunay-Akt} with $V$ as in \eqref{eq:Delaunay-V}. 
	
	\begin{lem}
		\label{thm:Delaunay-LimitForF}
		Assumptions \ref{ass:LimitForF} and \ref{ass:TransitionDensityBound} are satisfied. 
	\end{lem}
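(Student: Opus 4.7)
The plan is to exploit the explicit form $κ(t)\tilde h(t,y) = \exp(-H(t,y))$, which is bounded above by $1$ since $H\geq 0$. For \ref{ass:TransitionDensityBound} this is immediate: writing $\nu$ for counting measure on $\xi$,
\[ h^\sharp(t;s,x) = \sum_{y\in\xi}\exp(-H(t,y))\, p(s,x;t,y) \leq \sum_{y\in\xi} p(s,x;t,y) = 1, \]
so $h^\sharp(t;s,X_s)\leq 1$ holds $\bb{P}$-almost surely, uniformly in $s$ and $t$.

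For \ref{ass:LimitForF}, I would adopt the splitting used in \Cref{thm:ChemicalReactions-InhomPP-LimitForF} and decompose
\[ h^\sharp(t;s,x) = p(s,x;t,x_T) + \sum_{y\in\xi,\, y\neq x_T}\exp(-H(t,y))\,p(s,x;t,y). \]
The diagonal term converges to $p(s,x;T,x_T)=h(s,x)$ by continuity in $t$ of the transition probabilities of the pure-jump process on $D(\xi)$; this is well posed because the jump rate out of $x_T$ is finite, since by local finiteness of $\xi$ the Delaunay neighborhood $N_\xi(x_T)$ is finite.

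The key step is to show that the off-diagonal tail vanishes. The standing properties of $\xi$, and in particular local finiteness of $\xi$ on compact sets, guarantee
\[ d^\star := \min\bigl\{|x_T - y| : y\in\xi,\; y\neq x_T\bigr\} > 0. \]
For every $y\neq x_T$ in $\xi$ one then has $|x_T-y|^2\geq (d^\star)^2$, hence
\[ \exp(-H(t,y)) \leq \exp\!\left(-\frac{(d^\star)^2}{2\tilde a(T-t)}\right). \]
Pulling this uniform bound out of the sum and using $\sum_{y\neq x_T} p(s,x;t,y)\leq 1$ yields
\[ \sum_{y\neq x_T}\exp(-H(t,y))\,p(s,x;t,y) \;\leq\; \exp\!\left(-\frac{(d^\star)^2}{2\tilde a(T-t)}\right), \]
which tends to $0$ as $t\uparrow T$, completing the verification. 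I do not expect a real obstacle here; the only mildly delicate point is the right-continuity of $t\mapsto p(s,x;t,x_T)$ at $T$, but this is standard for jump processes with locally finite rates.
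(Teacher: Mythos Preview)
Your proof is correct and follows essentially the same approach as the paper: write $κ(t)\tilde h(t,y)=\exp(-H(t,y))\le 1$, bound $h^\sharp\le 1$ for \ref{ass:TransitionDensityBound}, and split off the $y=x_T$ term for \ref{ass:LimitForF}. Your introduction of $d^\star$ makes the vanishing of the infinite off-diagonal sum explicit, whereas the paper simply observes that each exponential factor tends to $0$ while $p$ stays in $[0,1]$; your version is a slightly cleaner justification of the same step.
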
 
	\begin{proof}
		The proof is similar to the proof of \Cref{thm:ChemicalReactions-InhomPP-LimitForF}. First note that $κ(t)\tilde{h}(t,x)=\exp\left(-H(t,x)\right)$. Now  $X$ admits a transition density $p$ with respect to the counting measure and thus for $s\in[0,T)$ and $y\in \xi$,
		\begin{align*}
			h^\sharp(t;s,x) &= \int κ(t)\tilde{h}(t,y)p(s,x;t,y)\dd\nu(y) \\
			&= \sum_{y\in\xi} \exp\left(-H(t,y)\right)p(s,x;t,y) \\
			&= p(s,x;t,x_T) + \sum_{y\in\xi\setminus\{x_T\}} \exp\left( - \frac{\abs{x_T-y}^2}{2\tilde{a}(T-t)} \right)p(s,x;t,y).
		\end{align*}
		Clearly all terms are uniformly bounded in $s$, $t$ and $x$ and, as $t\uparrow T$, the first term tends to $h(s,x)$, while the second term tends to $0$ as $p$ stays between $0$ and $1$. 
	\end{proof}
	
	\begin{lem}
		\[ \lim_{k\to\infty}\lim_{t\uparrow T} \bb{E}_t^\star \left( κ(t)\frac{\tilde{h}(t, X_t)}{h(t, X_t)}\ind_{A_k(t)} \right) =\bb{P}^\star_T(A(T)).\]
	\end{lem}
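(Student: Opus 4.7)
The plan is to follow the pattern of the proof of \Cref{thm:ChemicalReactions-InHomPP-FractionLimit}, but simplify using the change-of-measure identity $\dd\bb{P}^\star_t/\dd\bb{P}_t = h(t,X_t)/h(0,x_0)$, which holds because $\scr{A}h = 0$. Since the factors of $h(t,X_t)$ cancel and $\kappa(t)\tilde{h}(t,x) = \exp(-H(t,x))$, one obtains
\[\bb{E}^\star_t\!\left[\kappa(t)\frac{\tilde{h}(t, X_t)}{h(t, X_t)}\ind_{A_k(t)}\right] = \frac{1}{h(0,x_0)}\,\bb{E}\!\left[e^{-H(t,X_t)}\ind_{A_k(t)}\right].\]
The right-hand integrand is bounded by $1$, so by bounded convergence it suffices to identify the $\bb{P}$-almost sure limit of $e^{-H(t,X_t)}\ind_{A_k(t)}$ as $t\uparrow T$.

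The central claim is that this limit equals $\ind_{A_k(T)}$ almost surely. Off $A_k(T)$, the monotonicity $A_k(t')\subseteq A_k(t)$ for $t\leq t'$ forces $\ind_{A_k(t)}=0$ for all $t$ past some $t^*(\omega)<T$. On $A_k(T)$, the key assertion is that $X_s=x_T$ for all $s$ in some random left-neighborhood of $T$, so that $H(t,X_t)=0$ and $e^{-H(t,X_t)}=1$ eventually. To prove this, invoke non-explosion of the unconditioned jump process on $[0,T]$, so that almost surely the path visits only finitely many vertices in any left-neighborhood of $T$. If $X_{T-}\neq x_T$, then for $s$ close to $T$, $X_s$ is constant equal to some $x\neq x_T$; by \Cref{prop:Delaunay-GreedyWorks} there exists $y\in N_\xi(x)$ with $\abs{x_T-y}<\abs{x_T-x}$, and the contribution
\[\frac{\tilde{h}(s,y)}{\tilde{h}(s,x)} = \exp\!\left(\frac{\abs{x_T-x}^2-\abs{x_T-y}^2}{2\tilde{a}(T-s)}\right)\]
to $V(s,X_s)$ diverges as $s\uparrow T$, contradicting $V\leq k$.

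With the pointwise limit established, bounded convergence gives $\lim_{t\uparrow T}\bb{E}[e^{-H(t,X_t)}\ind_{A_k(t)}]=\bb{P}(A_k(T))$. Applying the same change-of-measure identity to $\ind_{A_k(T)}$ and using that $X_T=x_T$ (hence $h(T,X_T)=1$) on $A_k(T)$ yields $\bb{P}(A_k(T)) = h(0,x_0)\,\bb{P}^\star(A_k(T))$. Sending $k\to\infty$ and invoking monotone convergence on the increasing sequence $A_k(T)\nearrow A(T)$ completes the proof.

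The hard part is the ``reaches $x_T$ and stays there'' argument on $A_k(T)$: it blends the greedy-neighbor property of Delaunay triangulations (\Cref{prop:Delaunay-GreedyWorks}) with the non-explosion of the random walk on $D(\xi)$, which together ensure that any persistent deviation from $x_T$ makes a single exponential term in $V$ blow up fast enough to defeat the uniform bound. Everything else is bookkeeping via dominated and monotone convergence.
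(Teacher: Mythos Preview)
Your proof is correct but takes a genuinely different route from the paper. The paper simply invokes the same reasoning as in the Poisson case (\Cref{thm:ChemicalReactions-InHomPP-FractionLimit}): it works under $\bb{P}^\star$ throughout and shows that $\kappa(t)\tilde{h}(t,X_t)/h(t,X_t)\,\ind_{A_k(t)} \to \ind_{A_k(T)}$ $\bb{P}^\star$-almost surely, which requires controlling the ratio on $A_k(t)\setminus A_k(T)$ and hence a lower bound on the intractable $h(t,x)=p(t,x;T,x_T)$ for $x\neq x_T$ near~$T$. You instead pass to $\bb{P}$ first, so the factor $h(t,X_t)$ cancels and you only need the pointwise limit of the bounded quantity $e^{-H(t,X_t)}\ind_{A_k(t)}$; the greedy-neighbour argument via \Cref{prop:Delaunay-GreedyWorks} then does all the work. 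This is a genuine simplification of the convergence step.

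The price you pay is the conversion $\bb{P}(A_k(T))=h(0,x_0)\,\bb{P}^\star(A_k(T))$ at the end. Your phrasing (``applying the same change-of-measure identity to $\ind_{A_k(T)}$'') is slightly loose, since $h$ is only established as an $S$-good function for $S<T$; but in this discrete setting the identity is just elementary conditioning: $\bb{P}^\star(\cdot)=\bb{P}(\cdot\mid X_T=x_T)$ with $\bb{P}(X_T=x_T)=h(0,x_0)>0$, and your own argument gives $A_k(T)\subseteq\{X_T=x_T\}$, so the step is valid. Both approaches tacitly use non-explosion (you under $\bb{P}$, the paper under $\bb{P}^\star$) to ensure piecewise constancy near $T$; yours is the milder requirement since the unconditioned walk has simpler rates.
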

	\begin{proof}
		Through the same reasoning as in the proof of \Cref{thm:ChemicalReactions-InHomPP-FractionLimit}, it can be deduced that 
		\[ \lim_{k\to\infty}\lim_{t\uparrow T} \bb{E}_t^\star \left( κ(t)\frac{\tilde{h}(t, X_t)}{h(t, X_t)}\ind_{A_k(t)} \right) = \lim_{k\to\infty}\bb{E}_T^\star \ind_{A_k(T)} = \bb{P}^\star_T(A(T)). \]
	\end{proof}
	
	\begin{lem}
		For all $k$, $Ψ_t(X)κ(t)\ind_{A_k(t)}$ is uniformly bounded in $t$ with $\bb{P}^\circ$-probability $1$. 
	\end{lem}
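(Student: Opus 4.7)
The plan is to write out $\kappa(t)\Psi_t(X)$ explicitly using the formula for $\mathcal{A}\tilde h/\tilde h$ already derived in \eqref{eq:Delaunay-Ah/h}, and observe that the factor $\kappa(t)$ is designed precisely to cancel the only term in the exponent that blows up as $t\uparrow T$ in a way that is not controlled by $V$.

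First I would plug \eqref{eq:Delaunay-Ah/h} into the defining integral for $\log\Psi_t(X)$, which yields
\[
\log\Psi_t(X) \;=\; \int_0^t\!\left[V(s,X_s)-|N_\xi(X_s)|\right]ds \;+\; \int_0^t \frac{d\log\eta}{ds}(s)\,ds \;-\; \int_0^t \frac{\partial H}{\partial s}(s,X_s)\,ds,
\]
where I have rewritten $\sum_{y\in N_\xi(X_s)}\tilde h(s,y)/\tilde h(s,X_s)=V(s,X_s)$. The second integral equals $\log(\eta(t)/\eta(0))=\log\bigl(T/(T-t)\bigr)$, so after exponentiation this factor becomes $T/(T-t)$; multiplying by $\kappa(t)=1/\eta(t)=2\pi\tilde a(T-t)$ cancels the $(T-t)^{-1}$ exactly and leaves the constant prefactor $2\pi\tilde a T$. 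For the third integral, $\partial H/\partial s=|x_T-X_s|^2/(2\tilde a(T-s)^2)\ge 0$.

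Combining these computations I would arrive at
\[
\kappa(t)\Psi_t(X) \;=\; 2\pi\tilde a T\,\exp\!\left(\int_0^t\!\left[V(s,X_s)-|N_\xi(X_s)|-\frac{|x_T-X_s|^2}{2\tilde a(T-s)^2}\right]ds\right).
\]
On the event $A_k(t)$ one has $V(s,X_s)\le k$ for every $s<t$ by definition, and the two subtracted quantities $|N_\xi(X_s)|$ and $|x_T-X_s|^2/(2\tilde a(T-s)^2)$ are nonnegative. Hence the integrand is pathwise bounded above by $k$, so
\[
\kappa(t)\Psi_t(X)\ind_{A_k(t)}\;\le\;2\pi\tilde a T\,e^{kT},
\]
which is a deterministic, hence $\mathbb{P}^\circ$-a.s., bound uniform in $t\in[0,T)$, completing the verification.

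There is no real obstacle here: the argument is essentially algebraic once \eqref{eq:Delaunay-Ah/h} is in hand. The only delicate point worth highlighting is the choice $\kappa(t)=1/\eta(t)$, which was made precisely so that the logarithmic singularity of $\eta$ at $T$ is cancelled; without this rescaling the $(T-t)^{-1}$ factor would prevent any uniform bound. The remaining nonnegative terms then allow $V$ on $A_k(t)$ to control the integral from above.
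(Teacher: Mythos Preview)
Your proof is correct and follows essentially the same approach as the paper: both use \eqref{eq:Delaunay-Ah/h} to write $\log\big(\kappa(t)\Psi_t(X)\big)$, observe that the choice $\kappa(t)=1/\eta(t)$ cancels $\int_0^t \frac{d}{ds}\log\eta(s)\,ds$ to leave the constant $-\log\eta(0)=\log(2\pi\tilde a T)$, drop the nonnegative term $\int_0^t \partial_s H(s,X_s)\,ds$, and bound $\sum_{y\in N_\xi(X_s)}\big(\tilde h(s,y)/\tilde h(s,X_s)-1\big)=V(s,X_s)-|N_\xi(X_s)|\le k$ on $A_k(t)$. Your final bound $2\pi\tilde a T\,e^{kT}$ coincides with the paper's $\exp\{kT-\log\eta(0)\}$.
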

	\begin{proof}
		A direct computation yields 
		\begin{align*}
			Ψ_t(X)κ(t) &= \exp\left( \log κ(t) + \int_0^t \frac{\scr{A}\tilde{h}}{\tilde{h}}(s, X_s)\dd s \right) \\
			&= \exp\left( -\log η(t) + \int_0^t \der{}{s}\log η(s)\dd s - \int_0^t \pder{H}{s}(s, X_s)\dd s + \int_0^t \frac{\scr{L}\tilde{h}}{\tilde{h}}(s, X_s) \dd s\right) \\
			&\leq \exp\left( -\log η(0) + \int_0^t \sum_{y\in N_ξ(X_s)} \left(\frac{\tilde{h}(s, y)}{\tilde{h}(s, X_s)} -1\right)\dd s \right).
		\end{align*}
		Here the final inequality is obtained as $\pder{H}{s}(s,x) \geq 0$ for any pair $(s,x)$. Note that under $A_k(t)$, the final term is bounded by $\exp\left\{kt -\log η(0)\right\}$ and thus $\sup_t Ψ_t(X)κ(t)\ind_{A_k(t)}$ is uniformly bounded by $\exp\left\{kT-\log η(0)\right\}$.
	\end{proof}

	Finally, we show that $\bb{P}^\circ(A(T))=1$. \Cref{prop:Delaunay-GreedyWorks} demonstrates that if $X_t^\circ\neq x_T$ for $t < T$, then $\min_{y\in N_ξ(X_t^\circ)} \abs{x_T-y}< \abs{x_T-X_t^\circ}$, i.e. there is always an $y\in N_ξ(X_t^\circ)$ that takes $X_t^\circ$ closer to $x_T$. Since $X^\circ$ is piecewise constant, it follows from the preceding and \eqref{eq:Delaunay-V} that  $\sup_t V(t, X_t^\circ)$ is bounded on the event $\{X_T^\circ=x_T\}$ and infinite otherwise since $\min_{y\in N_ξ(X_t^\circ)} \abs{x_T-y} - \abs{x_T-X_t^\circ} \geq 0$ if and only if $X_t^\circ = x_T$. This leads to the following theorem. 
	
	\begin{thm}
		\label{thm:Delaunay-XTtov}
		$\bb{P}^\circ\left(X_T = x_T\right)=1$. 
	\end{thm}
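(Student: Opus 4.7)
The plan is to reduce the statement, via the observation in the paragraph preceding the theorem, to the claim $\bb{P}^\circ(\sup_{t \in [0, T)} V(t, X_t^\circ) < \infty) = 1$, with $V$ as in \eqref{eq:Delaunay-V}. Since $X^\circ$ is piecewise constant, $X_T^\circ = y \neq x_T$ forces $X^\circ$ to be constantly $y$ on some left-neighborhood of $T$; combined with \Cref{prop:Delaunay-GreedyWorks} and the explicit jump rates \eqref{eq:Delaunay-jumprates}, this in turn forces $V(t, X_t^\circ) \to \infty$ as $t \uparrow T$. Conversely, $V(t, x_T) = \sum_{y \in N_\xi(x_T)} \exp(-\abs{x_T - y}^2/(2\tilde{a}(T-t)))$ is uniformly bounded by $\abs{N_\xi(x_T)}$ and tends to $0$, so on $\{X_T^\circ = x_T\}$ the supremum of $V$ along the path is finite.

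To establish the almost-sure bound, I would analyze the dynamics of $X^\circ$ directly. For every vertex $y \neq x_T$, \Cref{prop:Delaunay-GreedyWorks} yields $y^\ast \in N_\xi(y)$ with $\abs{x_T - y^\ast} < \abs{x_T - y}$, so $V(t, y) \geq \exp(c_y/(T - t))$ where $c_y := (\abs{x_T - y}^2 - \abs{x_T - y^\ast}^2)/(2\tilde{a}) > 0$, and hence $\int_s^T V(u, y)\dd u = \infty$ for every $s < T$. Thus the probability that $X^\circ$ never jumps from $y$ on $[s, T]$ is zero. Iterating \Cref{prop:Delaunay-GreedyWorks} from $x_0$ produces a finite greedy path $x_0 = z_0, z_1, \dots, z_N = x_T$ of strictly decreasing distances to $x_T$, and for $s$ close to $T$ the rate from $z_i$ to $z_{i+1}$ dominates the total out-rate at $z_i$, so the probability that $X^\circ$ traverses this path before time $T$ tends to one. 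Therefore $\tau := \inf\{t : X_t^\circ = x_T\}$ satisfies $\tau < T$ $\bb{P}^\circ$-a.s. After $\tau$, the integral $\int_\tau^T V(u, x_T)\dd u$ is finite, because $\exp(-c/(T - u))$ is integrable at $u = T$; hence only finitely many excursions away from $x_T$ occur, and from each neighbor $y \in N_\xi(x_T)$ the return rate $\tilde h(u, x_T)/\tilde h(u, y) \to \infty$ forces a return before $T$. Together these imply $X_T^\circ = x_T$.

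The main obstacle is making the oscillation argument near $T$ fully rigorous, in particular ruling out an explosive cascade of short excursions around $x_T$ that would render $X_T^\circ$ ill-defined. The finiteness of $\int_\tau^T V(u, x_T)\dd u$ supplies a Borel--Cantelli-type bound on the number of excursions and resolves this issue, closing the argument.
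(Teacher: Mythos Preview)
Your overall plan---first show the guided process hits $x_T$ before time $T$, then control excursions afterwards---is the same as the paper's. The greedy-path step, however, contains a real gap. The claim that ``for $s$ close to $T$ the rate from $z_i$ to $z_{i+1}$ dominates the total out-rate at $z_i$'' is false unless $z_{i+1}$ happens to be the \emph{unique} neighbor of $z_i$ nearest to $x_T$. If $z_i$ has another neighbor $z'$ with $\abs{x_T-z'}<\abs{x_T-z_{i+1}}$, then by \eqref{eq:Delaunay-jumprates} the rate to $z'$ exceeds the rate to $z_{i+1}$ by the factor $\exp\bigl((\abs{x_T-z_{i+1}}^2-\abs{x_T-z'}^2)/(2\tilde a(T-s))\bigr)\to\infty$, so with overwhelming probability the first jump from $z_i$ is to $z'$, not along your prescribed path. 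Since \Cref{prop:Delaunay-GreedyWorks} only produces \emph{some} closer neighbor, your construction of the $z_i$ does not guarantee this. The paper sidesteps the issue by abandoning any fixed path: it sets $\tau_+=\inf\{s\ge t:\abs{x_T-X_s}<\abs{x_T-X_t}\}$ and bounds $\bb{P}^\circ(\tau_+>s\mid X_t=x)$ by $\exp\bigl(-\int_t^s V_+(u,x)\dd u\bigr)$, where $V_+$ sums the rates to \emph{all} closer neighbors. Because $V_+$ integrates to infinity on $[t,T)$, one gets $\tau_+<T$ a.s.; iterating and using that only finitely many points of $\xi$ lie within distance $\abs{x_T-x_0}$ of $x_T$ then yields $\bb{P}^\circ(\exists\,s\in[t,T):X_s=x_T)=1$ for every $t<T$.

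Where your proposal is more careful than the paper is in the passage to $t=T$. The paper simply identifies $\{X_T=x_T\}$ with $\bigcap_{t<T}\{\exists\,s\in[t,T):X_s=x_T\}$ and invokes monotone continuity, leaving the non-accumulation of jumps at $T$ implicit. Your observation that $\int_\tau^T V(u,x_T)\dd u<\infty$ bounds the expected number of departures from $x_T$ after the first visit, together with the fact (already established by the argument above, restarted at the excursion time) that each excursion returns before $T$, makes this step genuinely rigorous and addresses precisely the obstacle you flag in your final paragraph.
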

	\begin{proof}
		Given $X_t^\circ=x$, $X^\circ$ jumps to $y\in N_ξ(x)$ with rate $\frac{\tilde{h}(t, y)}{\tilde{h}(t,x)}$. Hence, as $\varepsilon\downarrow 0$, 
		\begin{equation} 
			\label{eq:Delaunay-infinitesimaldistributionXcirc}
			\bb{P}^\circ\left( X_{t+\varepsilon} = y \mid X_t = x \right) =\frac{\tilde{h}(t, y)}{\tilde{h}(t,x)} \varepsilon + o(\varepsilon). 
		\end{equation}
		
		Define the first time the process is closer to $x_T$ as $\tau_+$. That is, 
		\[ \tau_+ = \inf\left\{s\geq t \colon \abs{x_T-X_s}<\abs{x_T-X_t}  \right\} \]
		
		It can be quickly derived that the distribution of $\tau_+$ satisfies 
		
		\begin{equation} 
			\label{eq:Delaunay-nextbetterjumptime}
			\bb{P^\circ}(\tau_+>s \mid X_t=x) \leq \exp\left(-\int_t^s V_+(u,x)\dd u\right), \qquad x\neq x_T,
		\end{equation}
		where 
		\[ V_+(u,x) = \sum_{\substack{y\in N_\xi(x) \\ \abs{x_T-y}<\abs{x_T-x}} } \frac{\tilde{h}(u,y)}{\tilde{h}(u,x)}= \sum_{\substack{y\in N_\xi(x) \\ \abs{x_T-y}<\abs{x_T-x}} } \exp\left(-\frac{\abs{x_T-y}^2-\abs{x_T-x}^2}{2\tilde{a}(T-u)}\right)  \]
		Upon plugging in $s=T$ in \eqref{eq:Delaunay-nextbetterjumptime}, we deduce that for any $t<T$ and $x\neq x_T$, the probability, conditional on $X_t=x$, of reaching a point closer to $x_T$ before time $T$ equals $1$. That is, for all $t<T$ and $x\in\xi\setminus\{x_T\}$,
		\begin{equation} 
			\label{eq:Delaunay-Proof-IntermediateEquation}
			\mathbb{P}^\circ\left( \exists s\in[t,T) \text{ such that } \abs{x_T-X_s}< \abs{x_T-x}\mid X_t=x \right)=1.
			\end{equation}
		
	Since \eqref{eq:Delaunay-Proof-IntermediateEquation} holds for any $t<T$, $X^\circ$ keeps reaching points close to $x_T$ with probability $1$. Since there are only finitely many points in all compact sets around around $x_T$, 
	\[ \bb{P}(\exists s\in[t,T) \text{ such that }X_s=x_T\mid X_t=x)=1 \]
	for all $x\in\xi$. Hence, by the law of total probability, for all $t<T$,
		\begin{equation}
			\label{eq:Delaunay-proofXT=xT-intermediateequation}
			 \bb{P}^\circ(\exists s\in[t,T)\text{ such that }X_s=x_T)=1
		\end{equation}
		
		Define the collection of events $B_t = \bigcup_{t\leq s<T}\{X_s=x_T\}$. By \eqref{eq:Delaunay-proofXT=xT-intermediateequation}, $\mathbb{P}^\circ(B_t)=1$ for all $t<T$. By monotonicity of probability measures 
		\[ \mathbb{P}^\circ(X_T=x_T) = \mathbb{P}^\circ\left(\bigcap_{t<T} B_t\right)=\lim_{t\uparrow T}\mathbb{P}^\circ(B_t)=1 \]
	\end{proof} 

	\begin{cor}
		\label{thm:Delaunay-GuidedProcessToEnd}
		$\bb{P}^\circ\left( \sup_{0\leq t< T}V(t, X_t)<\infty \right)=1$. 
	\end{cor}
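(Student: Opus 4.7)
The plan is to reduce the statement to a pathwise claim on the $\bb{P}^\circ$-full-measure event $E = \{X_T^\circ = x_T\}$ supplied by \Cref{thm:Delaunay-XTtov}; on $E$ I will show $\sup_{0 \leq t < T} V(t, X_t^\circ) < \infty$ by exploiting the asymmetry between the jump rates out of $x_T$ and between other vertices of $\xi$. This is essentially the pathwise observation sketched in the paragraph preceding the corollary; the task is to make it rigorous.

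Fix a sample path in $E$ and introduce the last time the process is away from $x_T$,
\[ \tau = \sup\{s < T : X_s^\circ \neq x_T\}, \qquad \sup \emptyset := 0. \]
The key step is to show $\tau < T$ almost surely on $E$, i.e.\ to rule out an accumulation of jumps at $T$. This must be argued using only departures from $x_T$, since the rate $V(t, X_t^\circ)$ can blow up when $X_t^\circ \neq x_T$ and $t \uparrow T$. The total departure rate from $x_T$ is controlled:
\[ V(t, x_T) = \sum_{y \in N_\xi(x_T)} \exp\!\bigl(-\tfrac{\abs{x_T-y}^2}{2\tilde a(T-t)}\bigr) \leq \abs{N_\xi(x_T)}, \]
so the compensator of the counting process of departures from $x_T$ is at most $T\abs{N_\xi(x_T)} < \infty$ on $[0,T]$. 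Hence the number of departures from $x_T$ on $[0,T)$ is almost surely finite, and $\tau < T$.

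With $\tau < T$ in hand, I would split
\[ \sup_{0 \leq t < T} V(t, X_t^\circ) \leq \max\!\Bigl( \sup_{t \in [0,\tau]} V(t, X_t^\circ),\ \sup_{t \in [\tau, T)} V(t, X_t^\circ) \Bigr). \]
On $[\tau, T)$ we have $X_t^\circ = x_T$, so $V(t, X_t^\circ) \leq \abs{N_\xi(x_T)}$ by the bound above. On $[0, \tau]$, finitely many departures from $x_T$ combined with \Cref{prop:Delaunay-GreedyWorks} (which forces the distances $\abs{x_T - X_{\tau_k}^\circ}$ between successive returns to $x_T$ to strictly decrease and therefore take only finitely many values in the locally finite point set $\xi$) imply that $X^\circ$ visits only finitely many vertices of $\xi$ on $[0,\tau]$; the function $V$ is jointly continuous on the compact product of $[0,\tau]$ with this finite vertex set, hence bounded there. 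The main obstacle is precisely the no-accumulation claim $\tau < T$; once that is secured the remaining steps are routine.
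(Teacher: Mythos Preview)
Your overall strategy matches the paper's: reduce to the $\bb{P}^\circ$-full event $\{X_T^\circ = x_T\}$ supplied by \Cref{thm:Delaunay-XTtov} and argue pathwise that $\sup_t V(t,X_t^\circ) < \infty$ there. The paper's own proof is a one-liner that defers to the preceding paragraph, which simply asserts that $X^\circ$ is piecewise constant and hence, on $\{X_T^\circ = x_T\}$, the supremum is finite. Your compensator argument establishing $\tau < T$ via the bounded departure rate from $x_T$ is correct and actually makes rigorous a point the paper leaves implicit.

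The gap is in your final step. \Cref{prop:Delaunay-GreedyWorks} only asserts that from any $x \neq x_T$ there \emph{exists} a neighbour strictly closer to $x_T$; it does not constrain which neighbour the guided process actually jumps to, and by \eqref{eq:Delaunay-jumprates} the process jumps with positive rate to neighbours farther from $x_T$. There is therefore no mechanism forcing the distances $\abs{x_T - X_{\tau_k}^\circ}$ to decrease, and your deduction that only finitely many vertices are visited on $[0,\tau]$ from that proposition does not go through.

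The fix is simpler than your attempted argument and is precisely what the paper's phrase ``$X^\circ$ is piecewise constant'' encodes: for each fixed $S < T$ the measures $\bb{P}^\circ$ and $\bb{P}$ are equivalent on $\scr{F}_S$ (both $h$ and $\tilde h$ are $S$-good), and under $\bb{P}$ the original process $X$ is a non-explosive jump process on $D(\xi)$. Hence $X^\circ$ has almost surely finitely many jumps on every $[0,S]$ with $S < T$. Since you have already secured $\tau < T$, choose any $S \in (\tau, T)$ to conclude that $X^\circ$ makes only finitely many jumps---and therefore visits only finitely many vertices---on $[0,\tau]$; your continuity argument then finishes the proof.
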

	\begin{proof}
		 Clearly, for each $\omega\in\left\{ X_T^\circ=x_T \right\}$, we have that $\sup_{0\leq t<T}V(t,X_t(\omega))<\infty$, which finishes the proof as $\{X_T^\circ=x_T\}$ is an event with probability $1$ by \Cref{thm:Delaunay-XTtov}. 
		\end{proof}

	\section{Application 2: Conditional stochastic differential equations}
	\label{sec:ConditionalSDEs}
	In this section, we focus on Markov processes that arise as the solution to Euclidean SDEs and verify Assumption \ref{ass:MainAssumption} under certain conditions on the coefficients. 
	
	\subsection{Setting,  assumptions and main result}
	\label{subsec:ConditionalSDEs-GeneralSetting}
	We assume that $b\colon[0,T]\times\bb{R}^d\to\bb{R}^d$ and $σ\colon[0,T]\times\bb{R}^d\to\bb{R}^{d\times d'}$ are such that $X$, solving the SDE
	\begin{equation}
		\label{eq:ConditionalSDEs-MainSDE}
		\dd X_t = b(t, X_t)\dd t + σ(t, X_t)\dd W_t, \qquad X_0 =x_0 ,
	\end{equation}
	uniquely exists (in the strong sense). Here, $W$ is a $d'$-dimensional Wiener process with all components independent. A Doob $h$-transform then yields a process $X^\star$ that solves 
	\begin{equation}
		\label{eq:ConditionalSDEs-SDEConditionalProcess}
		\dd X_t^\star = b^\star(t, X_t^\star)\dd t + σ(t, X_t^\star)\dd W_t,\qquad X_0^\star=x_0,  
	\end{equation} 
	where, with $a = σσ^\T$, 
	\[ b^\star(t,x) = b(t,x) + a(t,x)\nabla_x\log{h}(t,x). \]
	The guided process induced by a function $\tilde{h}$ is found as the solution to 
	\begin{equation}
		\label{eq:ConditionalSDEs-SDEGuidedProcess}
		\dd X_t^\circ = b^\circ(t, X_t^\circ)\dd t + σ(t, X_t^\circ)\dd W_t, \qquad X_0^\circ=x_0, 
	\end{equation} 
	where
	\[ b^\circ(t,x) = b(t,x) + a(t,x)\nabla_x\log\tilde{h}(t,x). \]
	This can be derived using \eqref{eq:GPGenerator}. 		
	This setting has initially been considered in \cite{schauer2017}, followed by generalisation in \cite{bierkens2018simulation}. However, especially when the diffusion is hypo-elliptic with  state-dependent diffusivity, the results in these papers are insufficient to obtain absolute continuity of $\PP^\star$ with respect to $\PP^\circ$. The following example exemplifies the setting that we wish to study that is not covered by theoretical results in \cite{bierkens2018simulation}.

	\begin{ex}[Integrated diffusion]
		\label{ex:IntegratedDiffusion}
		Suppose we study the movement of a particle and $X_{t,1}$ denotes its position and $X_{t,2}$ its velocity at time $t$. Assume the velocity is driven by a Wiener process, so that we have the system of SDEs given by 
		\[ \begin{aligned}
			\dd X_{t,1} &= X_{t,2}\dd t \\
			\dd X_{t,2} &= β(t, X_t)\dd t + γ(t, X_{t,1}) \dd W_t 
		\end{aligned} .\]
		This example was studied in \cite{bierkens2018simulation} in the special case where $γ$ is a constant. Computational results in there suggest that absolute continuity may also hold if $γ$ is state-dependent, but the assumptions imposed in the main result are too strong to be verified for this example. 
	\end{ex}
	
	As in \cite{bierkens2018simulation}, we will consider the process $X$, conditioned on $LX_T=v$, where $L$ is an $m\times d$ matrix with $m\leq d$ and $v\in\bb{R}^m$. Without loss of generality, we assume $\rank{L}=m$. The conditional process $X^\star = \left(X\mid LX_T=v\right)$ arises from a Doob $h$-transform. In Section 1.3.2 of \cite{bierkens2018simulation}, it is shown that it suffices to choose $h(t,x)$ as the density of the measure $\bb{P}\left( LX_T \in \cdot \mid X_t=x\right)$, i.e. 
	\begin{equation}
		\label{eq:defhConditionalSDEs}
		h(t,x) = \begin{cases}  p(t,x;T,v) & \text{if }m=d \\ 
			\int_{\bb{R}^{d-m}}  p\left(t,x; T,\sum_{i=1}^d ξ_i f_i\right) \dd ξ_{m+1}\cdots \dd ξ_d & \text{if }m<d 	\end{cases}.
	\end{equation}
	Here, $p$ denotes the transition density of $X$ and we assume without loss of generality that $L=I$ in the case where $\rank{L}=d$. Moreover, $f_1,\dots,f_m$ form an orthonormal basis of $\mathrm{Col}(L^\T)$ and $f_{m+1},\dots, f_d$ form an orthonormal basis of $\ker{L}$. $\xi_1,\dots,\xi_d$ are so that $L\sum_{i=1}^d \xi_if_i=v$, for which $\xi_1,\dots,\xi_m$ are uniquely determined. 
	
	\subsubsection{Choice of appropriate Doob's $h$-transform $\tilde h$}
	\label{subsubsec:ConditionalSDEs-GeneralSetting-AuxiliaryProcess}
	
	Since $h$, as defined in \eqref{eq:defhConditionalSDEs} is generally intractable, we define a guided process induced by $\tilde{h}$, which is derived similarly to \eqref{eq:defhConditionalSDEs}, but with the transition density $\tilde{p}$ from an auxiliary process $\tilde{X}$ that satisfies the SDE 
	\[ \dd \tilde{X}_t = \tilde{b}\left(t, \tilde{X}_t\right)\dd t + \tilde{σ}(t)\dd W_t, \]
	where \[ \tilde{b}(t,x) = \tilde{B}(t)x + \tilde{β}(t). \]
	Here $\tilde{B}\colon[0,T]\to\bb{R}^{d\times d}$, $\tilde{β}\colon[0,T]\to\bb{R}^d$ and $\tilde{σ}:[0,T]\to\bb{R}^{d\times d'}$ should be so that $\tilde{X}$ exists uniquely and possesses smooth transition densities. We introduce the following notation, assuming $t<T$
	\begin{equation}
		\label{eq:DefinitionsAuxiliaryProcess}
		\begin{aligned}
			L(t) &=  L\exp\left\{ \int_t^T \tilde{B}(s)\dd s \right\} \\
			μ(t) &= \int_t^T L(s)\tilde{β}(s)\dd s \\
			M(t) &= \left( \int_t^T L(s)\tilde{a}(s)L(s)^\T \dd s \right)^{-1} \\
			ζ(t,x) &= v-μ(t)-L(t)x
		\end{aligned}
	\end{equation} 

	Here, we implicitly assume the inverse appearing in the definition of $M(t)$ exists and set $\tilde{a} = \tilde{σ}\tilde{σ}^\T$. By Lemma 2.5 of \cite{bierkens2018simulation}, 
	\begin{equation} 
		\label{eq:ConditionalSDEs-htilde}
		\tilde{h}(t,x) = η(t)\exp\{ -H(t,x) \},
	\end{equation}
	where
	\begin{equation}
		\label{eq:defetaandH}
		η(t) = (2π)^{-m/2}\abs{M(t)}^{1/2} \qquad\text{and}\qquad H(t,x) = \frac{1}{2}ζ(t,x)^\T M(t) ζ(t,x).
	\end{equation}	

	\subsubsection{Main result}
	\label{subsubsec:ConditionalSDEs-GeneralSetting-Behavior}
	Let $Δ(t)$ be an invertible $m\times m$ diagonal matrix valued measurable function on $[0,T)$ with $t\mapsto \Delta_{ii}(t)$ nondecreasing for $i \in \{1,\ldots, m\}$. Define 
	\begin{equation}
		\label{eq:defsDeltas}
		\begin{aligned}
			L_Δ(t) &= Δ(t)L(t) \\
			M_Δ(t)&= Δ(t)^{-1} M(t)Δ(t)^{-1} \\
			ζ_Δ(t,x) &= Δ(t)ζ(t,x)
		\end{aligned}
	\end{equation}
	Note that for $t\approx T$, we have $L(t) \approx L$ and $\mu (t)\approx 0$ and thus $ζ_Δ(t, X_t^\circ) \approx Δ(t)\left( v-LX_t^\circ\right)$. For a uniformly elliptic diffusion we will always take $Δ(t)=I$. In the hypo-elliptic setting,   the matrix $Δ(t)$ is included to account for potential differences in smoothness of components $X_t^\circ$.
	
	\begin{ass}
		\label{ass:AssumptionsAuxiliaryProcess}
		Let $t_0\in[0,T)$. The scaling matrix $\Delta(t)$ and the coefficients of the process $X$ and $\tilde{X}$ are such that 
		\begin{enumerate}[label={\color{red} (\ref{ass:AssumptionsAuxiliaryProcess}\alph*)}]
			\item \label{ass:AssumptionsAuxiliaryProcess-M}
			There exist positive constants $\underline{c}, \overline{c}$ so that for all $t\in[t_0,T)$
			\[ \underline{c}(T-t)^{-1} \leq λ_{\min}(M_Δ(t)) \leq λ_{\max}(M_Δ(t))\leq \overline{c}(T-t)^{-1}. \]
			\item \label{ass:AssumptionsAuxiliaryProcess-b}
			There exists a positive constant $c_1$ so that for all $t\in[t_0,T)$,
			\[ \abs{ L_Δ(t) \left( \tilde{b}(t,X_t^\circ)-b(t,X_t^\circ)\right) } \leq c_1. \]
			\item \label{ass:AsusmptionsAuxiliaryProcess-a}
			There exists a positive constant $c_2$ so that for all $t\in[t_0,T)$,
			\[ \tr{L_Δ(t)a(t,X_t^\circ)L_Δ(t)^\T} \leq c_2. \]
			\item \label{ass:AssumptionsAuxiliaryProcess-aatilde}
			There exists a positive constant $c_3$ and a function $θ\colon[t_0,T)\times\bb{R}^d\to [0,\infty)$ so that for all $t\in[t_0,T)$,
			\[ \norm{ L_Δ(t) \left( {\tilde{a}}(t) - {a}(t,X_t^\circ) \right) L_Δ(t)^\T } \leq c_3 \abs{ζ_Δ(t,X_t^\circ)} + θ(t,X_t^\circ), \]
			where there exist positive $c_4, α$ so that for all $x$, 
			\[ θ(t,x)\leq c_4(T-t)^α. \]
		\end{enumerate}
	\end{ass}
	
		\Cref{ass:AssumptionsAuxiliaryProcess} is similar to Assumption 2.7 of \cite{bierkens2018simulation}.  However, instead of \ref{ass:AssumptionsAuxiliaryProcess-aatilde} a stronger assumption is formulated in there, which excludes diffusions with state-dependent diffusivity apart from a few exceptional cases. The present assumption is  less restrictive  and therefore provides a solution to a conjecture posed in section 7.2 of \cite{bierkens2018simulation}. 
	
	We require one final assumption on the transition density of the process under $\bb{P}$ for the proof of \Cref{thm:mainresult_diffusions}. 
	
	\begin{ass}
		\label{ass:ConditionalSDEs-p<Cptilde}
		For $K>1$ and $u\geq0$, define $g_K(u)=\max(1/K,1-Ku)$. There exist constants $\Lambda,C>0$, $K>1$ and a function $μ_t(s,x)\colon\{s,t\colon0\leq s\leq t\leq T\}\times \bb{R}^d \to\bb{R}^d$ with $\abs{μ_t(s,x)-x}< K(t-s)\abs{x}$ and $\abs{μ_t(s,x)}^2\geq g_K(t-s)\abs{x}^2$, so that for all $s<t\leq T$ and $x,y\in \bb{R}^d$,
		\[ p(s,y;t,x) \leq C(t-s)^{-d/2} \exp\left( -\Lambda \frac{\abs{x-\mu_t(s,y)}^2}{t-s} \right) \]	
	\end{ass}
	
	\begin{rem}[On \Cref{ass:ConditionalSDEs-p<Cptilde}]
		This assumption is the same as Assumption 2 in \cite{schauer2017} and is implied by the stronger Aronson inequality. However, \Cref{ass:ConditionalSDEs-p<Cptilde} is also satisfied for example for linear diffusion processes that have unbounded drift. 
	\end{rem}

	\begin{prop}
		Let $S<T$. Then $h$ and $\tilde{h}$, defined in \eqref{eq:defhConditionalSDEs} and \eqref{eq:ConditionalSDEs-htilde}, respectively, are $S$-good functions.
	\end{prop}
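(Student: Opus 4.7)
The plan is to verify the hypotheses of \Cref{prop:goodfunctions} for $h$ and, for $\tilde h$, to follow the template used in the Delaunay example by combining Lemma~3.1 of \cite{palmowski2002} with Theorem~47 of \cite{protter1990}. Both functions are strictly positive by construction: $h>0$ because the transition density $p$ of $X$ is strictly positive under standard regularity of the coefficients, and $\tilde h=\eta(t)\exp(-H(t,x))$ is a Gaussian exponential with $\eta(t)>0$ on $[0,S]$.

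For $h$, the Kolmogorov backward equation gives $\mathcal A h=0$, so $\mathcal A h/h\equiv 0$ is trivially bounded and measurable on $[0,S]\times\bb R^d$. The remaining task is to bound $h$ itself there. Using \Cref{ass:ConditionalSDEs-p<Cptilde}, $p(t,x;T,y)\leq C(T-t)^{-d/2}\exp(-\Lambda\abs{y-\mu_T(t,x)}^2/(T-t))$. When $m=d$ this immediately yields $h(t,x)\leq C(T-S)^{-d/2}$. When $m<d$, the Gaussian integral over the $d-m$ directions orthogonal to $\mathrm{Col}(L^\T)$ contributes a factor of order $(T-t)^{(d-m)/2}$, so $h(t,x)\leq C'(T-S)^{-m/2}$. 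Both bounds are uniform on $[0,S]\times\bb R^d$, so \Cref{prop:goodfunctions} applies and $h$ is $S$-good.

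For $\tilde h$, the auxiliary process $\tilde X$ is constructed precisely so that $\tilde{\mathcal A}\tilde h=0$, and Lemma~3.1 of \cite{palmowski2002} then identifies $D_t^{\tilde h}$ as a $\bb P$-local martingale. To promote it to a true martingale on $[0,S]$ I would apply Theorem~47 of \cite{protter1990}, which requires that $D_t^{\tilde h}$ be $\bb P$-a.s.\ uniformly bounded on $[0,S]$. The prefactor $\tilde h(t,X_t)/\tilde h(0,x_0)\leq \eta(t)/\tilde h(0,x_0)$ is bounded on $[0,S]$ since $\eta$ is continuous there and $H\geq 0$. For the exponential term, using $\tilde{\mathcal A}\tilde h=0$ one rewrites $\mathcal A\tilde h/\tilde h = (\mathcal L-\tilde{\mathcal L})\tilde h/\tilde h$; an explicit computation from $\tilde h=\eta\exp(-H)$ produces a quadratic polynomial in $\zeta(s,X_s)$ whose coefficients involve $L(s), M(s), b(s,X_s)-\tilde b(s,X_s)$ and $a(s,X_s)-\tilde a(s)$. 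On $[0,S]$ the quantities $L, M, \mu$ are continuous, hence bounded, while $b,\tilde b,a,\tilde a$ are continuous in $x$. Because $s\mapsto X_s$ is $\bb P$-a.s.\ continuous on $[0,S]$ and so remains in a random compact set, the integrand is a.s.\ bounded, so $\int_0^t \mathcal A\tilde h/\tilde h(s,X_s)\dd s$ is a.s.\ finite, and $D_t^{\tilde h}$ is a.s.\ bounded on $[0,S]$.

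The delicate step is the a.s.\ boundedness of that last integral: its integrand grows quadratically in $X_s$ through $\zeta$, so the argument depends crucially on pathwise continuity of $X$ (which keeps $X_s$ in a random compact set on $[0,S]$) and on $[0,S]$ being bounded away from $T$ (which keeps $M(s)$ and $\eta(s)^{-1}$ finite). Everything else reduces to straightforward bookkeeping with the explicit Gaussian form of $\tilde h$.
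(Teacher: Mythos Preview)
Your treatment of $h$ is correct though it takes a different route from the paper: you bound $h$ uniformly via \Cref{ass:ConditionalSDEs-p<Cptilde} and invoke \Cref{prop:goodfunctions}, whereas the paper applies It\^o's formula to see that $D_t^h=h(t,X_t)/h(0,x_0)$ is a local martingale (since $\scr{A} h=0$) and then uses $\bb E\sup_{s\le t}h(s,X_s)<\infty$ together with Theorem~47 of \cite{protter1990}. Your version makes explicit the boundedness that the paper leaves implicit, at the price of importing \Cref{ass:ConditionalSDEs-p<Cptilde} into this proposition.

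For $\tilde h$, however, there is a genuine gap. What promotes a local martingale to a true martingale is integrability of the running supremum, $\bb E\bigl[\sup_{s\le t}D_s^{\tilde h}\bigr]<\infty$, not mere $\bb P$-a.s.\ finiteness of that supremum. Your bound on $D_t^{\tilde h}$ is \emph{random}: the integrand $\scr{A}\tilde h/\tilde h$ contains the quadratic term $\tfrac12\zeta^\T M L(a-\tilde a)L^\T M\zeta$ (cf.\ \eqref{eq:ConditionalSDEs-llikelihood}), whose sign is uncontrolled, so the pathwise upper bound you obtain on $D_t^{\tilde h}$ is of the form $\exp\bigl(c\int_0^S|X_s|^2\dd s\bigr)$, and you have not shown this to be integrable --- generically it is not. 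A positive continuous local martingale that is merely a.s.\ bounded on $[0,S]$ need not be a martingale; the inverse three-dimensional Bessel process is the standard counterexample, so the ``random compact set'' observation buys only a.s.\ finiteness, not the $L^1$ control required. The paper avoids this by a different device: setting $\gamma(t,x)=\sigma(t,x)^\T\nabla_x\log\tilde h(t,x)$ and following \cite{meulen2018Bayesian}, it rewrites $D_t^{\tilde h}$ as the stochastic exponential $\exp\bigl(\int_0^t\gamma^\T\dd W-\tfrac12\int_0^t|\gamma|^2\dd s\bigr)$ and appeals to the proof of Theorem~1 in \cite{delyon2006}, which establishes directly that this Girsanov density is a true $\bb P$-martingale on $[0,S]$, without passing through an $L^1$ bound on the running supremum.
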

	\begin{proof}
		Note that by It\^o's formula, 
		\[ \dd h(t,X_t) = \sigma(t,X_t)^\T\nabla_x h(t,X_t)\dd W_t + \scr{A}h(t, X_t)\dd t. \]
		Hence, since $\scr{A}h=0$, $D_t^h=\frac{h(t,X_t)}{h(0,x_0)}$ is a $\bb{P}$-local martingale. Since also $\bb{E}\sup_{s\leq t}h(s, X_s) <\infty$ for all $t\in[0,S]$,  $h$ is an $S$-good function by Theorem 47 of \cite{protter1990}.  \\
		
		To see that $\tilde{h}$ is an $S$-good function, denote $\gamma(t,x) = \sigma(t,x)^\T\nabla_x\log\tilde{h}(t,x)$. Following the reasoning in the proof of Theorem 3.3 in \cite{meulen2018Bayesian}, we deduce that 
		\[D_t^{\tilde{h}} = \exp\left( \int_0^t \gamma(s,X_s)^\T\dd W_s -\frac12\int_0^t \abs{\gamma(s,X_s)}^2\dd s \right) \]
	\end{proof}
	In the proof of Theorem 1 of \cite{delyon2006}, it is shown that the right hand side is a positive $\bb{P}$-martingale and thus $\tilde{h}$ is an $S$-good function.

	Our  main result of this section is  that \Cref{ass:AssumptionsAuxiliaryProcess} and \Cref{ass:ConditionalSDEs-p<Cptilde} suffice to justify the limiting operation $t\uparrow T$ and hence absolute continuity of $\PP^\star$ with respect to $\PP^\circ$. 
	\begin{thm}
		\label{thm:mainresult_diffusions}
		Let $X$, $X^\star$ and $X^\circ$ be defined through \eqref{eq:ConditionalSDEs-MainSDE}, \eqref{eq:ConditionalSDEs-SDEConditionalProcess} and \eqref{eq:ConditionalSDEs-SDEGuidedProcess}, respectively. Here $X^\star$ and $X^\circ$ are induced by \eqref{eq:defhConditionalSDEs} and \eqref{eq:ConditionalSDEs-htilde}. Suppose \Cref{ass:AssumptionsAuxiliaryProcess} and \Cref{ass:ConditionalSDEs-p<Cptilde} are satisfied and there exists a positive $δ$ such that $\abs{Δ(t)}\lesssim (T-t)^{-\delta}$.  Then $\mathbb{P}_T^\star$ and $\bb{P}^\circ_T$ are equivalent and
		\[
		\der{\bb{P}_T^\star}{\bb{P}_T^\circ}(X) = \frac{\tilde{h}(0,x_0)}{h(0,x_0)} Ψ_T(X).
		\]
		Moreover, using \eqref{rem:AlternateFormPsi}, it can be deduced that $Ψ_t(X) = \exp\left(\int_0^t \scr{G}(s, X_s)\dd s\right)$, where $\scr{G}$ denotes the tractable quantity
		\begin{align*}
			\scr{G}(s,x) &= \left(  b(s,x)-\tilde{b}(s,x)\right)^\T\tilde{r}(s,x) \\
			& \quad - \frac12\tr{ \left[ a(s,x)-\tilde{a}(s)\right] \left[L(s)^\T M(s)L(s)-\tilde{r}(s,x)\tilde{r}(s,x)^\T\right] }
		\end{align*}
		with $\tilde{r}(s,x) = \nabla_x\log\tilde{h}(s,x)$. 
	\end{thm}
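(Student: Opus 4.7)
The plan is to apply \Cref{thm:MainResult}. Because both $h$ and $\tilde h$ are transition-density-like objects sharing the same singular normalisation as $t\uparrow T$, the discussion following \Cref{ass:MainAssumption} suggests taking $\kappa(t)\equiv 1$. I will define the events $A_k(t)$ via \Cref{lem:LyapunovFunction} using a nonnegative function $V$ chosen so that (i) $V$ dominates the integrand of $\log\Psi_t$ on $A_k(t)$, yielding \ref{ass:AlmostSureBoundOnPsi}, and (ii) $\scr{A}^\circ V\le 0$, so that \Cref{rem:DoobSupermartingaleInequality} gives $\bb{P}^\circ_T(A(T))=1$ and upgrades the conclusion to true equivalence. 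A natural candidate is a scalar function of $\zeta_\Delta(t,x)$; the identities $\nabla_x\log\tilde h(t,x)=L(t)^\T M(t)\zeta(t,x)$ and $\nabla_x^2\log\tilde h(t,x)=-L(t)^\T M(t)L(t)$, which follow directly from \eqref{eq:ConditionalSDEs-htilde}, will be used throughout.

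With these choices \ref{ass:EquivalenceAssumption-EventSequence1} is immediate. For \ref{ass:LimitForF} and \ref{ass:TransitionDensityBound} I combine \Cref{ass:ConditionalSDEs-p<Cptilde} with the Gaussian form of $\tilde h$: the pointwise bound on $p$ renders the integrand in $h^\sharp(t;s,x)$ a Gaussian kernel in $y$ of width $O\bigl((T-t)^{1/2}\bigr)$ multiplied by the smooth factor $\tilde h(t,y)$, and a dominated convergence argument yields $h^\sharp(t;s,x)\to h(s,x)$, with the same bound furnishing \ref{ass:TransitionDensityBound}.

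The main obstacle is \ref{ass:EquivalenceAssumption2Limitfraction}. Since $\scr{A}h=0$ gives $D_t^h=h(t,X_t)/h(0,x_0)$, one may rewrite
\[ \bb{E}^\star_t\!\left(\frac{\tilde h(t,X_t)}{h(t,X_t)}\ind_{A_k(t)}\right)=\frac{1}{h(0,x_0)}\bb{E}_t\!\left(\tilde h(t,X_t)\ind_{A_k(t)}\right), \]
reducing the problem to controlling the $\bb{P}$-expectation of $\tilde h(t,X_t)\ind_{A_k(t)}$ as $t\uparrow T$. The Gaussian upper bound from \Cref{ass:ConditionalSDEs-p<Cptilde}, together with the fact that $\tilde h(t,\cdot)$ concentrates on $\{Lx=v\}$ with variance collapsing like $T-t$, makes this integral converge to $h(0,x_0)\bb{P}^\star_T(A(T))$ after sending $k\to\infty$. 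This is the step where \Cref{ass:ConditionalSDEs-p<Cptilde}, rather than a stronger Aronson-type inequality, is essential: it accommodates linear diffusions with unbounded drift and hence addresses the conjecture of Section~7.2 of \cite{bierkens2018simulation}.

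For \ref{ass:AlmostSureBoundOnPsi}, the formula for $\scr{G}$ stated in the theorem follows from \eqref{rem:AlternateFormPsi} by a direct computation using the derivatives of $\log\tilde h$ above; each term is then controlled on $A_k(t)$ using \Cref{ass:AssumptionsAuxiliaryProcess}. Specifically, \ref{ass:AssumptionsAuxiliaryProcess-b} combined with \ref{ass:AssumptionsAuxiliaryProcess-M} handles $(b-\tilde b)^\T\tilde r$ after rewriting as $(L_\Delta(b-\tilde b))^\T M_\Delta\zeta_\Delta$; \ref{ass:AsusmptionsAuxiliaryProcess-a} handles $\tr{aL^\T M L}$; and the delicate $\tr{(a-\tilde a)L^\T M L}$ and $\zeta^\T M L(a-\tilde a)L^\T M\zeta$ pieces are tamed by \ref{ass:AssumptionsAuxiliaryProcess-aatilde}, whose bound $c_3|\zeta_\Delta|+c_4(T-t)^\alpha$ yields an integrable dominant when combined with the control of $V$ on $A_k(t)$ and the growth $|\Delta(t)|\lesssim(T-t)^{-\delta}$. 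Finally $\scr{A}^\circ V\le 0$ is verified by a direct differentiation of the quadratic form, and \Cref{rem:DoobSupermartingaleInequality} gives $\bb{P}^\circ_T(A(T))=1$; the closed-form Radon--Nikodym derivative then follows from \Cref{thm:MainResult} together with \eqref{rem:AlternateFormPsi}.
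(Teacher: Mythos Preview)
Your overall plan (take $\kappa\equiv 1$, define $A_k(t)$ via a Lyapunov function, verify \Cref{ass:MainAssumption}, then invoke \Cref{thm:MainResult}) matches the paper's. The treatment of \ref{ass:LimitForF}, \ref{ass:TransitionDensityBound}, \ref{ass:EquivalenceAssumption2Limitfraction} and \ref{ass:AlmostSureBoundOnPsi} is also in the right spirit. The genuine gap is your final step: the claim that ``$\scr{A}^\circ V\le 0$ is verified by a direct differentiation'' and hence that \Cref{rem:DoobSupermartingaleInequality} yields $\bb{P}^\circ_T(A(T))=1$. This is false in the diffusion setting and is exactly the step the paper works hardest to replace.

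If $V$ is any smooth scalar function of $\zeta_\Delta$ (in particular if one takes $V=H$ or any monotone function of $H$), the second-order It\^o correction in $\scr{A}^\circ V$ produces the trace term $\tfrac12\tr{a(t,x)L_\Delta(t)^\T M_\Delta(t)L_\Delta(t)}$, which by \ref{ass:AssumptionsAuxiliaryProcess-M} and \ref{ass:AsusmptionsAuxiliaryProcess-a} is of order $(T-t)^{-1}$ and \emph{positive}. There is no cancellation: the negative quadratic term $-\tfrac12\zeta_\Delta^\T M_\Delta L_\Delta a L_\Delta^\T M_\Delta\zeta_\Delta$ vanishes at $\zeta_\Delta=0$ while the trace term does not, so $\scr{A}^\circ V\le 0$ cannot hold on a neighbourhood of the target. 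The paper's remedy is twofold. First, it chooses $V(t,x)=\log^{-1}\!\bigl(\tfrac{1}{T-t}\bigr)\,H(t,x)$, the log prefactor being essential both for \ref{ass:AlmostSureBoundOnPsi} (integrability of $(T-t)^{-1/2}H^{1/2}$ on $A_k(t)$) and for the next step. Second, rather than a supermartingale inequality, it bounds
\[
\scr{A}^\circ V_t\le \ell_0(t)+\ell_1(t)V_t^{1/2}+\ell_2(t)V_t+\ell_3(t)V_t^{3/2}-\tfrac12\,\dd[M^V]_t/\dd t,
\]
with explicit singular coefficients $\ell_i$, and then combines an exponential-martingale estimate $M^V_t-\tfrac12[M^V]_t\le C\log\!\bigl(\tfrac{1}{T-t}\bigr)$ with a nonlinear Gronwall inequality to conclude $\limsup_{t\uparrow T}V(t,X_t^\circ)<\infty$ a.s. You need to replace the supermartingale shortcut by an argument of this type.
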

	The proof is deferred to the next subsection. The following result can simplify verifying \ref{ass:AssumptionsAuxiliaryProcess-aatilde}.	
	\begin{lem} 
		\label{cor:ConditionalSDEs-AlternativeAssumtion-aatilde}
		Suppose
		\begin{itemize}
			\item there exists a map $\bar{a}\colon[t_0,T]\times\bb{R}^m\to\bb{R}^{d\times d}$ such that $\bar{a}(t,Lx)=a(t,x)$ for all $(t,x)\in[t_0,T]\times\bb{R}^d$.
			\item 	 the map $t\mapsto L_Δ(t)\tilde{a}(t)L_Δ(t)^\T$ is Lipschitz and the map $(t,y)\mapsto L_Δ(t)\bar{a}(t,y)L_Δ(t)^\T$ is Lipschitz in $y$, uniformly in $t$. Moreover, $a_T=\lim_{t\uparrow T} L_Δ(t)\tilde{a}(t)L_Δ(t)^\T = \lim_{t\uparrow T}L_Δ(t)\bar{a}(t,v)L_Δ(t)^\T$ exists. 
		\end{itemize}
		Then assumption \ref{ass:AssumptionsAuxiliaryProcess-aatilde} is satisfied. 
	\end{lem}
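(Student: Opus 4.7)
The plan is a triangle inequality decomposition at $y = v$ combined with rewriting $v - Lx$ through the definition of $\zeta_\Delta$. Writing $F(t) := L_\Delta(t) \tilde a(t) L_\Delta(t)^\T$ and $G(t, y) := L_\Delta(t) \bar a(t, y) L_\Delta(t)^\T$, the hypothesis $\bar a(t, Lx) = a(t, x)$ identifies the quantity to be bounded as $F(t) - G(t, Lx)$, and I split it as
\[
F(t) - G(t, Lx) = [F(t) - G(t, v)] + [G(t, v) - G(t, Lx)].
\]
The first bracket is a deterministic function of $t$ alone and will be absorbed into $\theta$; the second will supply the $c_3 |\zeta_\Delta(t, x)|$ contribution together with a residue also destined for $\theta$.

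For the second bracket, the Lipschitz-in-$y$ hypothesis with uniform-in-$t$ constant $C$ gives $\|G(t, v) - G(t, Lx)\| \leq C |v - Lx|$. From $\zeta(t, x) = v - \mu(t) - L(t) x$ one obtains
\[
v - Lx = \Delta(t)^{-1} \zeta_\Delta(t, x) + \mu(t) + (L(t) - L) x.
\]
Monotonicity of $\Delta_{ii}$ bounds $\|\Delta(t)^{-1}\| \leq \|\Delta(t_0)^{-1}\|$ on $[t_0, T)$, yielding the desired $c_3 |\zeta_\Delta(t, x)|$ contribution with $c_3 := C\|\Delta(t_0)^{-1}\|$. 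The subleading piece $|\mu(t)|$ is of order $T - t$ under boundedness of $\tilde\beta$ and enters $\theta$ directly.

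For the first bracket, both $F(t)$ and $G(t, v)$ converge to the common limit $a_T$. The Lipschitz-in-$t$ hypothesis gives $\|F(t) - a_T\| \lesssim T - t$, and the continuity of $G(\cdot, v)$ inherited from the assumed limit together with the uniform Lipschitz-in-$y$ control of $G$ yields $\|G(t, v) - a_T\| = O((T - t)^{\beta})$ for some $\beta > 0$. Hence $\|F(t) - G(t, v)\| \lesssim (T - t)^\alpha$ with $\alpha := \min(1, \beta)$, providing the required $\theta$-decay.

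The main obstacle is the residual term $\|L(t) - L\| \cdot |x|$ in the decomposition of $v - Lx$: with $\tilde B$ bounded one has $\|L(t) - L\| = O(T - t)$, but the resulting $O(T - t) \cdot |x|$ is not uniform in $x$, whereas \ref{ass:AssumptionsAuxiliaryProcess-aatilde} demands $\theta(t, x) \leq c_4 (T - t)^\alpha$ uniformly in $x$. Since $a(t, x)$ depends on $x$ only through $Lx$, one may restrict $x$ to the orthogonal complement of $\ker L$, on which $L$ and, for $t$ sufficiently close to $T$, also $L(t)$ are injective with bounded inverse. The identity $L(t) x = v - \mu(t) - \Delta(t)^{-1} \zeta_\Delta(t, x)$ then bounds $|x| \lesssim 1 + |\zeta_\Delta(t, x)|$, so that $\|L(t) - L\| |x|$ splits as $O(T - t)$, which joins $\theta$, plus $O((T - t)|\zeta_\Delta(t, x)|)$, which is absorbed into the $c_3 |\zeta_\Delta|$ term after replacing $c_3$ by $c_3 + C(T - t_0)$. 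Collecting all contributions yields the uniform $(T-t)^\alpha$ bound required by \ref{ass:AssumptionsAuxiliaryProcess-aatilde}.
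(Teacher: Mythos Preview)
Your core strategy---the triangle decomposition through the common limit $a_T$, the Lipschitz bound $\|G(t,v)-G(t,Lx)\|\le C\,|v-Lx|$, and the rewriting $v-Lx=\Delta(t)^{-1}\zeta_\Delta(t,x)+\mu(t)+(L(t)-L)x$ together with $\|\Delta(t)^{-1}\|\le\|\Delta(t_0)^{-1}\|$---is exactly the paper's argument. The paper in fact stops at $\theta(t,x)=k_2(T-t)+k_1\,|\mu(t)+(L(t)-L)x|$ and does not check the uniform-in-$x$ requirement $\theta(t,x)\le c_4(T-t)^\alpha$, so the difficulty you flag concerning the residual $(L(t)-L)x$ is real and is simply left open in the paper.

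Your proposed repair of that point, however, does not go through. Replacing a general $x$ by its orthogonal projection $x'$ onto $(\ker L)^\perp$ leaves the left-hand side unchanged (it depends only on $Lx=Lx'$), but $\zeta_\Delta(t,\cdot)$ does not factor through $L$: one has $\zeta_\Delta(t,x)-\zeta_\Delta(t,x')=-\Delta(t)L(t)(x-x')$, and since $x-x'\in\ker L$ this equals $-\Delta(t)(L(t)-L)(x-x')$, which is again of order $(T-t)\,|x-x'|$ with $|x-x'|$ unbounded. Thus a bound of the form $c_3|\zeta_\Delta(t,x')|+O((T-t)^\alpha)$ does not transfer to the required bound in terms of $|\zeta_\Delta(t,x)|$. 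A second, smaller gap: the claim $\|G(t,v)-a_T\|=O((T-t)^\beta)$ is not supported by the hypotheses, which give only convergence $G(t,v)\to a_T$ without a rate, and Lipschitz continuity in $y$ cannot supply one. (The paper's one-line derivation of $k_1|v-Lx|+k_2(T-t)$ glosses over the same term.)
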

	\begin{proof}
		By the Lipschitz-assumptions, there exist constants $k_1$ and $k_2$ such that 
		\[ \begin{aligned}  \norm{ L_Δ(t) \left( {\tilde{a}}(t) - \bar{a}(t,Lx) \right) L_Δ(t)^\T }& = \norm{L_Δ(t)\tilde{a}(t)L_Δ(t)^\T - a_T + a_T - L_Δ(t)\bar{a}(t,Lx)L_Δ(t)^\T} \\
			&\leq  k_1\abs{v-Lx}+k_2(T-t).
			\end{aligned}	\]
		Now note that 
		\[ v-Lx = v-μ(t)-L(t)x + μ(t)+L(t)x-Lx = Δ(t)^{-1}ζ_Δ(t,x) + μ(t)+(L(t)-L)x. \]
		Hence
		\[ \abs{v-Lx} \leq \norm{Δ(t)^{-1}}\abs{ζ_Δ(t,x)} + \abs{μ(t) + (L(t)-L)x}.  \]
		Recall that $Δ$ has nondecreasing entries and therefore $\norm{Δ(t)^{-1}} \leq \norm{Δ(t_0)^{-1}}$. We can thus choose $c_3 = k_1\norm{Δ(t_0)^{-1}}$ and $θ(t,x) = k_2(T-t)+k_1 \abs{μ(t) + (L(t)-L)x}$. 
	\end{proof}
	
	\begin{ex}[\Cref{ex:IntegratedDiffusion} continued]
		Following up on \Cref{ex:IntegratedDiffusion}, note that if we measure a location $v$ at time $T$, we impose the condition $LX_T=v$ where $L = \Bm I & 0 \Em$. Now $σ$ is constant on $\{x\colon Lx=v\}$ and thus we may choose $x^v\in\{x\colon Lx=v\}$ arbitrarily and define the coefficients of the auxiliary process via
		\[ \tilde{B}(t) = \Bm 0 & I \\ 0 & 0 \Em, \qquad \tilde{\beta}(t)=0 \qquad \text{and}\qquad \tilde{σ} = \Bm 0 \\ \gamma(T, x^v) \Em. \]
		Now observe that $L(t)=\Bm I & (T-t)I \Em$ and $M(t) = \frac{3}{(T-t)^3}(γγ^\T)^{-1}(T, x^v)$. Direct computations now show that assumptions \ref{ass:AssumptionsAuxiliaryProcess-M}, \ref{ass:AssumptionsAuxiliaryProcess-b} and \ref{ass:AsusmptionsAuxiliaryProcess-a} are satisfied with $Δ(t) = (T-t)^{-1}I$. Another direct computation now shows that 
		\[ L_Δ(t)\left(\tilde{a}(t)-a(t,x)\right)L_Δ(t)^\T = (γγ^\T)(T, x^v) - (γγ^\T)(t, x). \]
		Hence, under smoothness conditions on $γγ^\T$, assumption \ref{ass:AssumptionsAuxiliaryProcess-aatilde} is also satisfied. 
	\end{ex}
	
	\subsection{Proof of \Cref{thm:mainresult_diffusions}}
	\label{subsec:ConditionalSDEs-Verification}
	We show that Assumptions \ref{ass:AssumptionsAuxiliaryProcess} and \ref{ass:ConditionalSDEs-p<Cptilde} imply that \Cref{ass:MainAssumption} holds and then the result follows by an application of \Cref{thm:MainResult} and proving $\bb{P}^\circ(\bigcup_k A_k(T)) = 1$. 
	
	Choose $\kappa(t) = 1$. Suppose $t_0$ is so that $\log\left(\frac{1}{T-t}\right)>0$ for all $t \geq t_0$ and define 
	\begin{equation}
		\label{eq:ConditionalSDEs-V}
		V(t,x) = \log^{-1}\left(\frac{1}{T-t}\right)H(t,x), \qquad (t,x)\in [t_0,T)\times\bb{R}^d,
	\end{equation}
	with $H$ as defined in \eqref{eq:defetaandH}. We choose the events $A_k(t)$ cf. \Cref{lem:LyapunovFunction} so that \ref{ass:EquivalenceAssumption-EventSequence1} is satisfied. Note that it is an immediate consequence of Lemma 6.3 of \cite{bierkens2018simulation} that \ref{ass:LimitForF} is satisfied as well. Moreover, \ref{ass:TransitionDensityBound} is a direct consequence of \Cref{ass:ConditionalSDEs-p<Cptilde} as the proof of Lemma 2 of \cite{schauer2017} can be repeated with $\tilde{h}$ also denoting a normal density.
	
	\begin{thm}
		\label{thm:ConditionalSDEs-fractionlimit}
		Suppose \Cref{ass:ConditionalSDEs-p<Cptilde} holds and $\delta>0$ exists so that $\abs{Δ(t)}\lesssim(T-t)^{-δ}$. Then \ref{ass:EquivalenceAssumption2Limitfraction} is satisfied. 
	\end{thm}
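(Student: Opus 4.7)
The plan is to reduce the statement to a short-time Gaussian concentration under the base measure $\bb{P}$ and then apply dominated convergence, mirroring the schema used in \Cref{thm:ChemicalReactions-InHomPP-FractionLimit}. Since $\scr{A}h = 0$, the Doob derivative simplifies to $D_t^h = h(t, X_t)/h(0,x_0)$, so the quantity to analyse rewrites as
\[ \bb{E}^\star_t\!\left[\frac{\tilde h(t, X_t)}{h(t, X_t)} \ind_{A_k(t)}\right] = \frac{1}{h(0,x_0)}\bb{E}_t\!\bigl[\tilde h(t, X_t)\ind_{A_k(t)}\bigr]. \]
This sidesteps any pointwise lower bound on $h$: the numerator $\tilde h(t, X_t)$ is an explicit Gaussian-like object, and only its integrability against $p(0, x_0; t, \cdot)$ is needed.

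The heuristic driving the limit is that, as $t \uparrow T$, the function $\tilde h(t, \cdot)$ concentrates as a $\delta$-like distribution on $\{x : Lx = v\}$. In the elliptic case $m = d$, $\tilde h(t, \cdot)$ is literally a Gaussian density in $y$ with covariance of order $(T-t)$ centred near $v$, so formally $\int f(y)\tilde h(t, y)\,dy \to f(v)$; combined with $p(0, x_0; T, v) = h(0, x_0)$ and the identity $\bb{E}[f(X)\mid X_T = v] = \bb{E}^\star_T[f(X)]$, this recovers $\bb{P}^\star_T(A_k(T))$ when the test function is $\ind_{A_k(t)}$. The hypo-elliptic case $m < d$ reduces to the same statement after integrating out the unconstrained coordinates using the representation \eqref{eq:defhConditionalSDEs}.

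To make the $\delta$-convergence rigorous I would decompose $\ind_{A_k(t)} = \ind_{A_k(T)} + \ind_{A_k(t)\setminus A_k(T)}$. The residual term vanishes: $A_k(t)\setminus A_k(T) \downarrow \emptyset$ as $t \uparrow T$, while $\bb{E}[\tilde h(t, X_t)\ind_E]$ stays bounded uniformly in $t$ and $E$ by the same computation that established \ref{ass:TransitionDensityBound}. For the main term, \ref{ass:AssumptionsAuxiliaryProcess-M} together with the definition of $V$ shows that on $A_k(T)$ one has $|\zeta_Δ(t, X_t)|^2 \leq 2k\underline c^{-1}(T-t)\log\frac{1}{T-t} \to 0$, and the growth bound $|Δ(t)| \lesssim (T-t)^{-δ}$ then forces $LX_t \to v$ almost surely on $A_k(T)$; \Cref{ass:ConditionalSDEs-p<Cptilde} supplies the Aronson-type bound needed to dominate $\tilde h(t, y)\,p(0, x_0; t, y)$ uniformly in $t$ near $T$, enabling the passage to the limit and giving $\frac{1}{h(0,x_0)}\bb{E}_t[\tilde h(t,X_t)\ind_{A_k(t)}] \to \bb{P}^\star_T(A_k(T))$. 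Sending $k \to \infty$ and invoking monotone convergence (using \ref{ass:EquivalenceAssumption-EventSequence1} on both sides) then yields $\bb{P}^\star_T(A(T))$.

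The main obstacle is the uniform domination in the hypo-elliptic setting with state-dependent diffusivity: in contrast to \cite{bierkens2018simulation}, the mismatch between $a(t, X_t)$ and $\tilde a(t)$ is only controlled via \ref{ass:AssumptionsAuxiliaryProcess-aatilde}, which forces one to use the decay of $|\zeta_Δ(t, X_t)|$ enforced by $A_k(T)$ to ensure the mismatch contributes only a vanishing perturbation to the Gaussian concentration. The Lyapunov function $V(t,x) = \log^{-1}\!\bigl(1/(T-t)\bigr)H(t,x)$ is engineered precisely to balance these two regimes: the logarithmic factor absorbs the polynomial growth of $\eta(t)$ and $\Delta(t)$, while leaving enough decay of $\zeta_\Delta$ along trajectories in $A_k(T)$ to control the density ratio through \Cref{ass:ConditionalSDEs-p<Cptilde}.
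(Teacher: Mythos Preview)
Your change of measure to $\bb{P}$ is correct and matches the paper. The decomposition $\ind_{A_k(t)} = \ind_{A_k(T)} + \ind_{A_k(t)\setminus A_k(T)}$, however, collapses once you are under $\bb{P}$: the event $A_k(T)=\{\sup_{t_0\le s<T} V(s,X_s)\le k\}$ is $\bb{P}$-null. Indeed, on $\{LX_T\neq v\}$ one has $|\zeta(s,X_s)|\to|v-LX_T|>0$, so by \ref{ass:AssumptionsAuxiliaryProcess-M} $H(s,X_s)\gtrsim (T-s)^{-1}$ and $V(s,X_s)\to\infty$; since $LX_T$ has a density under $\bb{P}$, $\bb{P}(LX_T=v)=0$. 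Thus your ``main term'' $\bb{E}[\tilde h(t,X_t)\ind_{A_k(T)}]$ is identically zero, and the ``residual'' equals the whole expression. In particular the residual cannot vanish in the limit---it must converge to $h(0,x_0)\,\bb{P}^\star(A_k(T))>0$. The heuristic ``the set shrinks and the integrals are uniformly bounded'' is also insufficient in general: $\tilde h(t,\cdot)$ concentrates mass like a nascent delta while $A_k(t)\setminus A_k(T)$ shrinks, and without a rate comparison you cannot conclude.

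The paper avoids the $\scr{F}_T$-measurable event entirely by writing
\[
\bb{E}_t^\star\!\left[\frac{\tilde h(t,X_t)}{h(t,X_t)}\ind_{A_k(t)}\right]
=\bb{E}_t^\star\!\left[\frac{\tilde h(t,X_t)}{h(t,X_t)}\right]
-\bb{E}_t^\star\!\left[\frac{\tilde h(t,X_t)}{h(t,X_t)}\ind_{A_k(t)^c}\right],
\]
with both indicators $\scr{F}_t$-measurable. The first term tends to $1$ by \Cref{lem:FractionLimit} with $g_s\equiv 1$ (this is the $\delta$-convergence you describe, but without any indicator). The second term is exactly $\bb{E}_t^\star[\tilde h/h\cdot\ind\{\sigma_k<t\}]$ for the exit time $\sigma_k=\inf\{s\ge t_0:V(s,X_s)\ge k\}$, and Lemma~6.5 of \cite{bierkens2018simulation} shows this tends to $0$; that lemma is precisely where \Cref{ass:ConditionalSDEs-p<Cptilde} and the bound $|\Delta(t)|\lesssim(T-t)^{-\delta}$ are consumed. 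Finally the paper shows $\bb{P}^\star(A(T))=1$ separately, via $\bb{P}^\circ(A(T))=1$ (\Cref{thm:ConditionalSDEs-BoundOnV}) and equivalence on $[0,t]$ (\Cref{prop:HalfOpenInterval}). So the target value $1$ arises not from identifying the limit with $\bb{P}^\star(A_k(T))$ directly, but from computing the limit as $1$ and then recognising $\bb{P}^\star(A(T))=1$ independently.
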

	\begin{proof}
		Note that 
		\[\bb{E}_t^\star \left(κ(t)\frac{\tilde{h}(t, X_t)}{h(t,X_t)} \ind_{A_k(t)}\right) = \bb{E}_t^\star\left(κ(t)\frac{\tilde{h}(t, X_t)}{h(t,X_t)} \right) - \bb{E}_t^\star \left(κ(t)\frac{\tilde{h}(t, X_t)}{h(t,X_t)} \ind_{A_k(t)^c}\right).\]
		It follows from \Cref{lem:FractionLimit} and Lemma 6.5 in \cite{bierkens2018simulation} upon noticing that the stopping time used there coincides with $T\wedge \inf_{t_0\leq t <T} \{V(t, X_t) \geq k\}$ that the right hand side tends to $1$. By monotonicity, $\bb{P}^\star(A(T)) =\lim_{t\uparrow T} \bb{P}^\star(A(t))$. Now note that for any $t<T$, $\mathbb{P}^\circ(A(t))\geq \bb{P}^\circ(A(T))=1$, by \Cref{thm:ConditionalSDEs-BoundOnV} and thus by \Cref{prop:HalfOpenInterval}, $\bb{P}^\star(A(T)) =\lim_{t\uparrow T} \bb{P}^\star(A(t)) = 1$. 
	\end{proof}
	
	\begin{thm}
		\label{thm:ConditionalSDEs-Verification-BoundennessLikelihood}
		For fixed $k$ and $Ψ$, defined through \eqref{eq:defPsi}, $\sup_{t_0\leq t<T} Ψ_t(X)\ind_{A_k(t)}$ is bounded. 
	\end{thm}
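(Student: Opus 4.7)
The plan is to use the explicit representation $Ψ_t(X)=\exp\left(\int_0^t \scr{G}(s,X_s)\dd s\right)$ derived in Theorem~\ref{thm:mainresult_diffusions}, and to show that on the event $A_k(t)$ the integrand $\scr G(s,X_s)$ is dominated, pointwise in $s$, by an integrable function of $s$ alone, plus an almost-surely-finite contribution from the compact window $[0,t_0]$. This yields a $\bb P^\circ$-almost sure uniform-in-$t$ bound on $Ψ_t(X)\ind_{A_k(t)}$, which is exactly \ref{ass:AlmostSureBoundOnPsi}.

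First I would rewrite $\scr G$ in terms of the rescaled quantities \eqref{eq:defsDeltas}. A direct computation from $\tilde h(t,x)=η(t)\exp(-H(t,x))$ gives $\tilde r(s,x)= L(s)^\T M(s)ζ(s,x) = L_Δ(s)^\T M_Δ(s)ζ_Δ(s,x)$ and $L(s)^\T M(s)L(s) = L_Δ(s)^\T M_Δ(s)L_Δ(s)$, so $\scr G(s,X_s)$ decomposes into (i) a drift term $[L_Δ(b-\tilde b)(s,X_s)]^\T M_Δ(s) ζ_Δ(s,X_s)$, (ii) a trace term $-\tfrac12\tr{L_Δ(s)(a-\tilde a)(s,X_s)L_Δ(s)^\T M_Δ(s)}$, and (iii) a quadratic term $\tfrac12 ζ_Δ(s,X_s)^\T M_Δ(s)L_Δ(s)(a-\tilde a)(s,X_s)L_Δ(s)^\T M_Δ(s) ζ_Δ(s,X_s)$.

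Next I would extract the a priori bound on $\abs{ζ_Δ}$ forced by $A_k(t)$. From \ref{ass:AssumptionsAuxiliaryProcess-M} and $H = \tfrac12 ζ_Δ^\T M_Δ ζ_Δ$ one has $H(s,X_s)\geq \underline c\abs{ζ_Δ(s,X_s)}^2/(2(T-s))$, so the inequality $V(s,X_s)\leq k$ on $[t_0,t)$ forces $\abs{ζ_Δ(s,X_s)}^2\leq 2\underline c^{-1}k(T-s)\log\tfrac{1}{T-s}$. Combining this with $\norm{M_Δ(s)}\leq \bar c(T-s)^{-1}$, with $\abs{L_Δ(b-\tilde b)}\leq c_1$ from \ref{ass:AssumptionsAuxiliaryProcess-b}, with $\norm{L_Δ(a-\tilde a)L_Δ^\T}\leq c_3\abs{ζ_Δ}+c_4(T-s)^α$ from \ref{ass:AssumptionsAuxiliaryProcess-aatilde}, and with the elementary inequality $\abs{\tr{AB}}\leq \norm{A}\tr{B}$ for PSD $B$ applied to $B=M_Δ$ (so that $\tr{M_Δ}\leq m\bar c(T-s)^{-1}$), I would bound term (i) by $C_k(T-s)^{-1/2}\sqrt{\log 1/(T-s)}$, term (ii) by $C_k(T-s)^{-1/2}\sqrt{\log 1/(T-s)} + C_k(T-s)^{α-1}$, and term (iii) by $C_k(T-s)^{-1/2}(\log 1/(T-s))^{3/2} + C_k(T-s)^{α-1}\log 1/(T-s)$.

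Each dominating function is integrable on $[t_0,T)$: the $(T-s)^{α-1}$-pieces because $α>0$, and the $(T-s)^{-1/2}(\log 1/(T-s))^p$-pieces by the substitution $u=T-s$ and the standard estimate $\int_0^ε u^{-1/2}\abs{\log u}^p \dd u<\infty$. The remaining contribution $\int_0^{t_0}\scr G(s,X_s)\dd s$ is over a compact window bounded away from $T$ on which $\tilde h$ is smooth and strictly positive and $b,\tilde b,a,\tilde a$ are locally bounded along paths, so it is $\bb P^\circ$-almost surely finite by path continuity of $X^\circ$. Exponentiating delivers the required $\bb P^\circ$-a.s.\ uniform-in-$t$ bound. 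The main technical point will be the quadratic term (iii): the factor $\abs{M_Δ ζ_Δ}^2$ produces a $(\log 1/(T-s))^{3/2}$-power which remains just integrable against $(T-s)^{-1/2}$, and this is exactly why the envelope $V$ in the definition of $A_k$ was chosen to grow logarithmically as $s\uparrow T$ rather than to be a constant.
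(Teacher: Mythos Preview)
Your proposal is correct and follows essentially the same route as the paper's proof in Appendix~\ref{app:BoundPsi}: decompose $\scr G$ (equivalently $\scr A\tilde h/\tilde h$) into the drift, quadratic and trace terms, use \ref{ass:AssumptionsAuxiliaryProcess-M} together with $V\le k$ to control $\abs{ζ_\Delta}$, then bound each piece via \ref{ass:AssumptionsAuxiliaryProcess-b}--\ref{ass:AssumptionsAuxiliaryProcess-aatilde} by an integrable function of $s$. The only cosmetic differences are that the paper recycles the estimates \eqref{eq:ConditionalSDEs-StudyAcircH-b}, \eqref{eq:ConditionalSDEs-StudyAcircH-aatilde} and \eqref{eq:ConditionalSDEs-Integralsell} from the proof of Theorem~\ref{thm:ConditionalSDEs-BoundOnV} rather than rederiving them, and that you are more explicit about the trace inequality (your version $\abs{\tr{AB}}\le\norm{A}\tr{B}$ for symmetric $A$ and PSD $B$ is in fact cleaner here, since $L_\Delta(a-\tilde a)L_\Delta^\T$ need not be definite) and about the harmless contribution from $[0,t_0]$.
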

	\begin{proof}
		This proof is located in Appendix \ref{app:BoundPsi}.
	\end{proof}
	
	\begin{thm}
		\label{thm:ConditionalSDEs-BoundOnV}
		Under \Cref{ass:AssumptionsAuxiliaryProcess}  $\limsup_{t\uparrow T}V(t, X_t)$ is $\bb{P}^\circ$-almost surely bounded by a finite random variable with $V$ as in \eqref{eq:ConditionalSDEs-V}.
	\end{thm}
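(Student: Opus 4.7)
The plan is to show $\scr{A}^\circ V(t,x) \le 0$ on $[t_1, T)\times\bb{R}^d$ for some $t_1\in[t_0,T)$ sufficiently close to $T$---at least outside a fixed sublevel set of $V$---and then to apply Proposition \ref{rem:DoobSupermartingaleInequality} in its localized form, obtained by stopping $V(\cdot, X_\cdot^\circ)$ at the first exit time from $\{V\le k\}$ and letting $k\to\infty$. Setting $L_T(t):=\log(1/(T-t))$, It\^o's formula gives the decomposition
\[ \scr{A}^\circ V(t,x) = \frac{\scr{A}^\circ H(t,x)}{L_T(t)} - \frac{H(t,x)}{(T-t)\,L_T(t)^2}, \]
so the derivative of the logarithmic dampening factor produces a dissipation of order $|\zeta_\Delta|^2/((T-t)^2 L_T(t)^2)$ by \ref{ass:AssumptionsAuxiliaryProcess-M}, which is what makes the approach work.

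To compute $\scr{A}^\circ H$ I would differentiate $H=\tfrac12\zeta^\T M\zeta$ using $\partial_t\zeta=L\tilde b$ and $\partial_t M = ML\tilde a L^\T M$, combine with $\nabla_x\log\tilde h=L^\T M\zeta$, and use the cancellations forced by $\tilde{\scr{A}}\tilde h=0$ to obtain
\[ \scr{A}^\circ H = \zeta^\T M L(\tilde b-b) - \tfrac12\zeta^\T MLaL^\T M\zeta + \tfrac12\zeta^\T ML(\tilde a-a)L^\T M\zeta + \tfrac12\tr{(a-\tilde a)L^\T ML}. \]
Rewriting each term in the scaled variables $\zeta_\Delta,L_\Delta,M_\Delta$ of \eqref{eq:defsDeltas} and invoking \ref{ass:AssumptionsAuxiliaryProcess-M}--\ref{ass:AssumptionsAuxiliaryProcess-aatilde} yields contributions of order $(T-t)^{-1}|\zeta_\Delta|$, $(T-t)^{-1}$, and $(T-t)^{-2}(c_3|\zeta_\Delta|+\theta)|\zeta_\Delta|^2$; the negative quadratic piece $-\tfrac12\zeta^\T MLaL^\T M\zeta$ may be retained or discarded.

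The main obstacle is the cubic contribution $c_3(T-t)^{-2}|\zeta_\Delta|^3$ arising from the state-dependent diffusivity in \ref{ass:AssumptionsAuxiliaryProcess-aatilde}; this is precisely the term that prevented \cite{bierkens2018simulation} from treating hypo-elliptic diffusions with state-dependent volatility and is the very reason for the logarithmic dampening in $V$. The key observation is that on the sublevel set $\{V\le k\}$, \ref{ass:AssumptionsAuxiliaryProcess-M} forces $|\zeta_\Delta|\lesssim \sqrt{k(T-t)L_T(t)}$, so after division by $L_T(t)$ the cubic term is of order $\sqrt{(T-t)/L_T(t)}\cdot|\zeta_\Delta|^2/(T-t)^2$, which is dominated by the dissipation $|\zeta_\Delta|^2/((T-t)^2 L_T(t)^2)$ once $t$ is close enough to $T$. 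The linear, trace and $\theta$-residual terms are absorbed analogously using $\alpha>0$ and the polynomial bound $|\Delta(t)|\lesssim(T-t)^{-\delta}$. Once $\scr{A}^\circ V\le 0$ has been established on $[t_1,T)$ above a fixed level, the localized supermartingale argument combined with the $\bb{P}^\circ$-path continuity of $V(\cdot, X_\cdot^\circ)$ on the compact piece $[t_0,t_1]$ yields $\limsup_{t\uparrow T}V(t,X_t^\circ)<\infty$ $\bb{P}^\circ$-almost surely, completing the proof.
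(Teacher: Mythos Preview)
Your decomposition of $\scr{A}^\circ V$ and the identification of the four contributions in $\scr{A}^\circ H$ are essentially right (a minor slip: the trace term in $\scr{A}^\circ H$ is $\tfrac12\tr{a L^\T M L}$, not $\tfrac12\tr{(a-\tilde a)L^\T M L}$; the latter appears in $\scr{A}\tilde h/\tilde h$). The real problem is the supermartingale strategy itself.

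You want $\scr{A}^\circ V\le 0$ on $\{V>K\}\cap[t_1,T)$ for some fixed $K$ and $t_1$, but this cannot hold. Collecting terms as the paper does gives
\[
\scr{A}^\circ V \le \ell_0(t)+\ell_1(t)V^{1/2}+\ell_2(t)V+\ell_3(t)V^{3/2},
\]
with $\ell_2(t)\sim -1/((T-t)L_T(t))$ and $\ell_3(t)=\gamma_2\sqrt{L_T(t)/(T-t)}$. For fixed $t<T$ the cubic term dominates the linear dissipation as soon as $V\gtrsim 1/((T-t)L_T(t)^3)$, so $\scr{A}^\circ V>0$ for large $V$; no choice of $K$ and $t_1$ makes $V$ a supermartingale outside $\{V\le K\}$. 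Your ``key observation'' that on $\{V\le k\}$ one has $|\zeta_\Delta|\lesssim\sqrt{k(T-t)L_T(t)}$ is exactly the wrong direction: it bounds the cubic term only \emph{inside} the sublevel set, not outside it, and the resulting threshold $t_1(k)$ depends on $k$, which makes the localisation circular. The ``negative quadratic piece'' $-\tfrac12\zeta^\T M L a L^\T M\zeta$ cannot rescue this either, since under \Cref{ass:AssumptionsAuxiliaryProcess} there is no lower bound on $L_\Delta a L_\Delta^\T$ (only the trace upper bound \ref{ass:AsusmptionsAuxiliaryProcess-a}); in the hypo-elliptic examples this term may well vanish.

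The paper's proof sidesteps the supermartingale route entirely. It keeps the negative quadratic drift term and identifies it with $-\tfrac12\,\dd[M^V]_t/\dd t$, the quadratic variation of the martingale part of $V$. An exponential martingale inequality (\Cref{lem:ExponentialMartingaleBound}) then gives $M^V_t-\tfrac12[M^V]_t\le C\,L_T(t)$ for a finite random $C$, which turns the SDE for $V$ into the pathwise integral inequality
\[
V_t\le V_{t_0}+C\,L_T(t)+\int_{t_0}^t\bigl(\ell_0+\ell_1 V^{1/2}+\ell_2 V+\ell_3 V^{3/2}\bigr)\dd s.
\]
Crucially $\int_{t_0}^T\ell_3(s)\dd s<\infty$, so a Bihari--Gronwall inequality (\Cref{lem:Gronwall}, based on \cite{agarwal2005}) closes the estimate and gives an explicit finite bound on $V_t$ as $t\uparrow T$. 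The point is that the $V^{3/2}$ nonlinearity is handled by the integrability of its coefficient, not by a sign condition on $\scr{A}^\circ V$.
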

	\begin{proof}
		The proof is located in \Cref{app:BoundV}. 
	\end{proof}
	
	We conclude that, as stated in the first part of this section, \ref{ass:EquivalenceAssumption-EventSequence1}, \ref{ass:LimitForF} and \ref{ass:TransitionDensityBound} are satisfied. \ref{ass:EquivalenceAssumption2Limitfraction} is satisfied by \Cref{thm:ConditionalSDEs-fractionlimit} and \ref{ass:AlmostSureBoundOnPsi} by \Cref{thm:ConditionalSDEs-Verification-BoundennessLikelihood}.  Lastly, by \Cref{thm:ConditionalSDEs-BoundOnV}, $\bb{P}^\circ_T\left( A(T)\right) = \bb{P}^\circ_T\left( \sup_{t_0\leq t<T} V(t, X_t)<\infty \right)=1$ and thus \Cref{thm:mainresult_diffusions} is proven via \Cref{thm:MainResult}. 
	
		\subsection{Application: stochastic landmarks registration}
	\label{subsec:landmarks}
	
	In this section we apply our results to a class of models that has recently appeared in shape analysis. Suppose a shape is characterised by a finite set of points, referred to as landmarks.  One problem in  landmarks registration consists of finding a flow of diffeomorphisms  mapping an ordered set of landmarks of one shape to that of another shape, assuming both shapes are summarised by an equal number of landmarks (see for instance \cite{younes2012}). Whereas traditional models assume the flow to be generated by an {\it ordinary} differential equation, {\it stochastic} differential equations have been proposed more recently.  Here, we consider specifically the model proposed in \cite{arnaudon2019geometric}.  
	Suppose $q = (q_1, \dots, q_n)$  denotes a configuration of $n$ distinct landmarks $q_i\in Ω$ in a domain $Ω\subseteq\bb{R}^d$. Suppose that to each position $q_i$ a momentum vector $p_i$ is attached. Define the Hamiltonian
	\begin{equation*}
		H(q,p)  =
		\frac12\sum_{i,j=1}^np_i^T K(q_i,q_j)p_j,
		%  \label{eq:Hamiltonian}
	\end{equation*}
	the kernel $K$ typically being chosen as Gaussian. 
	The Eulerian model proposed by \cite{arnaudon2019geometric} specifies a flow on landmark positions induced by the system of stochastic differential equations
	\begin{align}\label{sto-Ham-intro}
		\begin{split}
			\dd q_i &= (\partial_{p_i} H)(q,p) \dd t + \sum_{l=1}^J\sigma_l(q_i) \circ \dd W_t^l
			\,,\\
			\dd p_i &= -(\partial_{q_i} H)(q,p)\dd t 
			- \sum_{l=1}^J \left(\partial_{q_i}\langle p_i, \sigma_l(q_i)\rangle\right)(q_i, p_i)  \circ  \dd W_t^l \, ,
		\end{split}
	\end{align}
	where we have surpressed dependence of $q_i$ and $p_i$ on $t$, for readability. 
	Here,  $σ_1,\dots, σ_J$ are noise fields centred  at prespecified locations $δ_1,\dots, δ_J\in \Omega$ defined by 
	\begin{equation}
		\label{eq:Landmarks-DefSigma}
		σ_\ell^α(q_i) = γ_α\Lambda_τ(q_i-δ_\ell) , \qquad \alpha = 1,\ldots, d
	\end{equation}
	for noise-amplitudes $γ\in\bb{R}^d$ and kernel $\Lambda_τ$ with length-scale $τ$. For more explanation and details on the derivation of this model, we refer the reader to \cite{arnaudon2019geometric}. 
	
	We illustrate the stochastic landmarks registration problem using Figure \ref{fig:Landmarks}, where the number of landmarks equals $n=75$. Here, the black and orange points correspond to $q(0):=(q_1(0), \ldots , q_{n}(0))$ and $q(1):=(q_1(1), \ldots , q_{n}(1))$, respectively. If initial momenta $p(0):=(p_1(0),\ldots, p_{n}(0))$ are specified, then the system \eqref{sto-Ham-intro} defines a flow that takes $x(0):=(q(0), p(0))$ to  $x(1)$ at time $1$. The landmarks registration problem corresponds to conditioning the process such that the vector of positions at time $1$, ``the $q$-part of $x(1)$'', equals $q(1)$.  The left-hand panel of the figure shows an unconditional forward simulation of the process; the right-hand panel shows a sample of the guided process defined below. 
	\begin{figure}
		\centering
		\includegraphics[width = 0.8\textwidth]{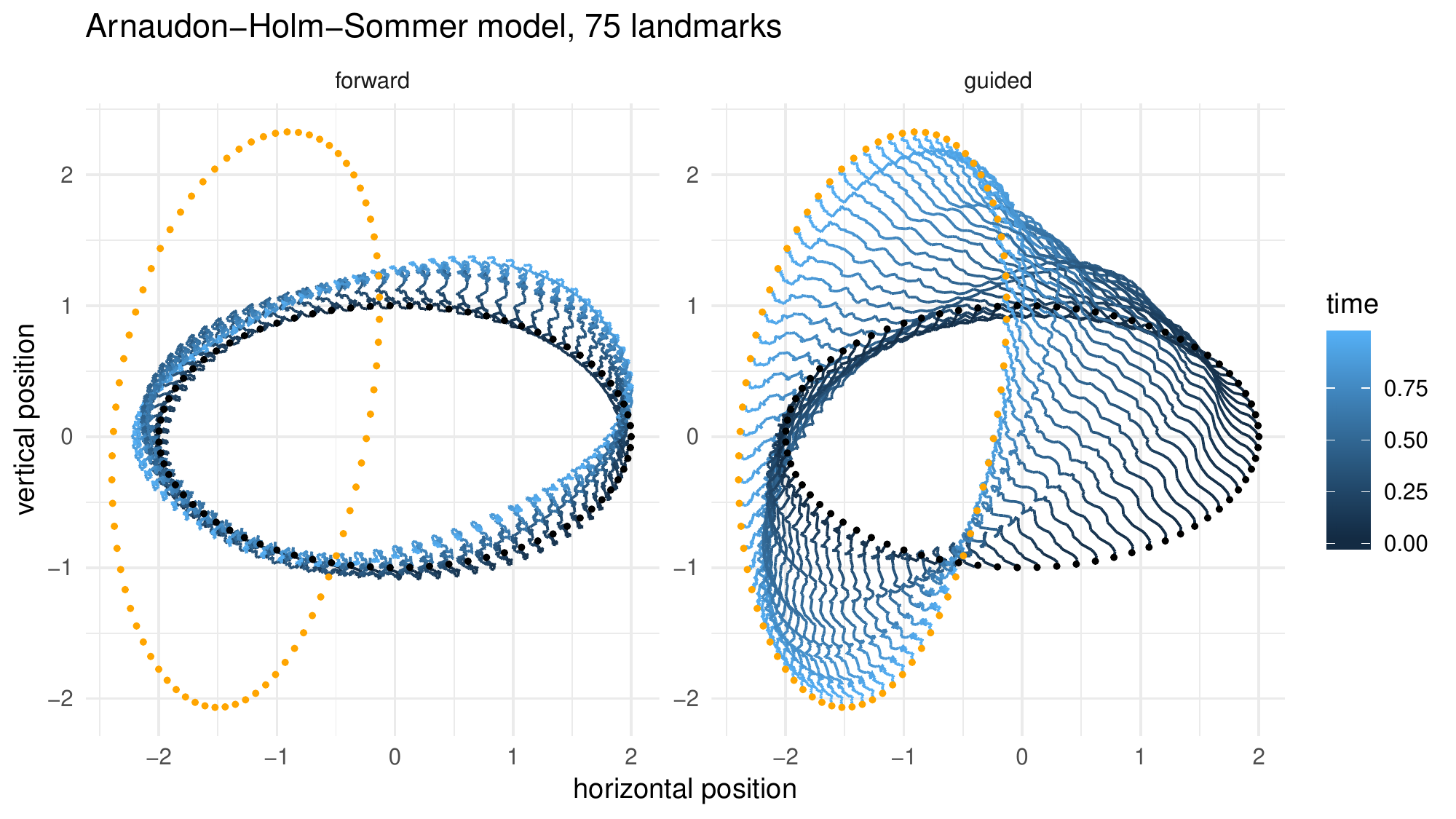}
		\caption{Illustration of the landmarks registration problem. In both figures, the black and orange points are landmarks that represent the shape at time $0$ and time $1$, respectively. Left: a sample path of the unconditioned process. Right: a sample path of the guided process.}
		\label{fig:Landmarks}
	\end{figure}
	
	Conditioning the system \eqref{sto-Ham-intro} on $q(1)$ is challenging as the diffusivity coefficient in the system \eqref{sto-Ham-intro} is state-dependent with the dynamics of each $q_i$ and $p_i$ driven by all Wiener processes $W^j$. As in \cite{arnaudon2021} we define a guided process based on the  auxiliary process 
	
	\begin{equation}
		\label{eq:Landmarks-AHS-Auxiliary}
		\begin{aligned}
			\dd \tilde{q}_i &= (\partial_{p_i} H)(q(1),\tilde{p}) \dd t && + \sum_{l=1}^J\sigma_l(q_i(1)) \circ \dd W_t^l
			\\
			\dd \tilde{p}_i &= &&
			- \sum_{l=1}^J \left(\partial_{q_i}\langle p_i, \sigma_l(q_i)\rangle\right)(q_i(1), p_i(1))  \circ  \dd W_t^l \, ,
		\end{aligned}
	\end{equation}
	where $q_i(1)$ are the landmarks of the shape that we condition on and $p_i(1)$ can be freely chosen. 
	
	Then it follows from an It\^o-Stratonovich conversion, see Proposition 1 of \cite{arnaudon2021}, that $\tilde{X}=\Bm \tilde{q} & \tilde{p} \Em$ satisfies the linear stochastic differential equation $\dd \tilde{X}_t = \left(\tilde{B}\tilde{X}_t +\tilde{β}\right)\dd t + \tilde{σ}\dd W_t$ where $\tilde{B}$ is of the form 
	\[ \tilde{B} = \Bm 0 & G \\ 0 & C \Em\]
	and where $G$ and $C$ are known, constant matrices.
	
	\begin{thm}
		\label{thm:Landmarks-main}
		Let $\tilde{σ} = \Bm \tilde{σ}_q \\ \tilde{σ}_p \Em$. If
		\begin{itemize}
			\item $\tilde{σ}_q\tilde{σ}_q^\T$ is strictly positive definite;
			\item $K$, $\Lambda_τ$ and $\nabla \Lambda_τ$ are continuous on $Ω$;
			\item the maps $σ_qσ_q^\T$, $σ_pσ_q^\T$ and $σ_pσ_p^\T$ are Lipschitz in space.,
		\end{itemize}
		then \Cref{ass:AssumptionsAuxiliaryProcess} is satisfied. 
	\end{thm}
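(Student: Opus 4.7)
The plan is to check each of the four items of \Cref{ass:AssumptionsAuxiliaryProcess} with $L = [I\ 0]$ (projecting onto landmark positions) and $\Delta(t) = I$, exploiting both the block upper-triangular form of $\tilde{B}$ and the crucial fact that the noise fields \eqref{eq:Landmarks-DefSigma} depend only on the positions $q_i$ and not on the momenta.

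For \ref{ass:AssumptionsAuxiliaryProcess-M}, the vanishing $(1,1)$-block of $\tilde{B} = \Bm 0 & G \\ 0 & C \Em$ gives by induction on powers $\exp(s\tilde{B}) = \Bm I & G\phi(s) \\ 0 & e^{sC} \Em$ with $\phi(s) = \int_0^s e^{uC}\dd u$, hence
\[ L(t) = [\,I\;\; G\phi(T-t)\,], \]
which is smooth and uniformly bounded on $[0,T]$. Plugging this into the defining integral for $M(t)^{-1}$ and using the block form of $\tilde{a} = \tilde\sigma\tilde\sigma^\T$, I obtain $M(t)^{-1} = (T-t)\tilde\sigma_q\tilde\sigma_q^\T + O((T-t)^2)$ as $t\uparrow T$. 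Positive definiteness of $\tilde\sigma_q\tilde\sigma_q^\T$ then yields the desired two-sided eigenvalue bounds of order $(T-t)^{-1}$ on $M_\Delta(t) = M(t)$. For \ref{ass:AssumptionsAuxiliaryProcess-b} and \ref{ass:AsusmptionsAuxiliaryProcess-a}, boundedness of $L_\Delta(t)$ combined with the continuity of $K$, $\Lambda_\tau$ and $\nabla \Lambda_\tau$ makes $\tilde{b} - b$ and $a$ locally bounded continuous functions of $X^\circ_t$; since trajectories of $X^\circ$ are a.s.\ locally bounded on $[0,T)$, the required pathwise bounds follow.

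The delicate item is \ref{ass:AssumptionsAuxiliaryProcess-aatilde}, which compares the state-dependent, hypo-elliptic diffusivity with its auxiliary counterpart and is precisely the case excluded by the sufficient conditions of \cite{bierkens2018simulation}. Here I apply \Cref{cor:ConditionalSDEs-AlternativeAssumtion-aatilde}: since $\sigma_\ell^\alpha(q_i) = \gamma_\alpha \Lambda_\tau(q_i-\delta_\ell)$ depends only on the positions, $a(t,x) = \sigma(q)\sigma(q)^\T$ factors through $Lx = q$, so we may take $\bar{a}(t, y) = \sigma(y)\sigma(y)^\T$ in the notation of the corollary. The Lipschitz hypotheses on $\sigma_q\sigma_q^\T$, $\sigma_p\sigma_q^\T$ and $\sigma_p\sigma_p^\T$, combined with the explicit bounded form of $L_\Delta(t)$, deliver the Lipschitz property of $L_\Delta(t)\bar{a}(t, y)L_\Delta(t)^\T$ in $y$ uniformly in $t$ and the Lipschitz property of $L_\Delta(t)\tilde{a}L_\Delta(t)^\T$ in $t$. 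By construction we chose $\tilde\sigma$ equal to $\sigma$ at $q(1)$, so both limits at $t = T$ coincide with $L\sigma(q(1))\sigma(q(1))^\T L^\T$, and the corollary delivers \ref{ass:AssumptionsAuxiliaryProcess-aatilde}. This last item is the main obstacle of the proof; the other three follow in a routine way from the explicit form of $L(t)$, and it is the position-only dependence of $\sigma$ together with the Lipschitz hypotheses that is exactly what is needed to bring \Cref{cor:ConditionalSDEs-AlternativeAssumtion-aatilde} into play.
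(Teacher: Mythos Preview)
Your proposal is essentially the same as the paper's proof: same choice $L=[I\ 0]$, $\Delta(t)=I$, the same computation of $L(t)=[I\ Q(t)]$ (your $G\phi(T-t)$ coincides with the paper's series $Q(t)$), the same expansion $M(t)^{-1}=(T-t)\tilde\sigma_q\tilde\sigma_q^\T+O((T-t)^2)$ to verify \ref{ass:AssumptionsAuxiliaryProcess-M}, and the same appeal to \Cref{cor:ConditionalSDEs-AlternativeAssumtion-aatilde} for the crucial item \ref{ass:AssumptionsAuxiliaryProcess-aatilde}.

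One inaccuracy worth flagging: your claim that ``$a(t,x)=\sigma(q)\sigma(q)^\T$ factors through $Lx=q$'' is not quite right. The momentum noise $-\partial_{q_i}\langle p_i,\sigma_l(q_i)\rangle$ depends linearly on $p_i$, so $\sigma_p$ and hence the full $a=\sigma\sigma^\T$ depend on both $q$ and $p$; only the $(q,q)$-block $\sigma_q\sigma_q^\T$ factors through $Lx$. The paper handles this by expanding
\[
L(t)a(t,x)L(t)^\T=\sigma_q\sigma_q^\T+Q(t)\sigma_p\sigma_q^\T+\sigma_q\sigma_p^\T Q(t)^\T+Q(t)\sigma_p\sigma_p^\T Q(t)^\T,
\]
so that the $p$-dependent pieces all carry a factor $Q(t)=O(T-t)$ and can be absorbed into $\theta(t,x)$, while the leading term $\sigma_q\sigma_q^\T$ depends only on $q$ and matches $\tilde\sigma_q\tilde\sigma_q^\T$ at $q=q(1)$. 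With this correction your argument goes through exactly as in the paper.
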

	
	\begin{proof}[Proof of \Cref{thm:Landmarks-main}]
		Clearly, we have $L = \Bm I & 0 \Em$ and since all elements of $X$ have the same H\"older-regularity as Brownian motion,  we choose $Δ(t) = I$. A direct computation yields $L(t) = \Bm I & Q(t)\Em$ where $Q(t) = \sum_{n=1}^\infty GC^{n-1}\frac{(1-t)^n}{n!}$. We have 
		\begin{equation*}
			\begin{aligned}
				\int_t^1 L(s)\tilde{a}L(s)^\T \dd s &= (1-t)\tilde{σ}_q\tilde{σ}_q^\T + \int_t^1 Q(s)\dd s \tilde{σ}_p\tilde{σ}_q^\T + \tilde{σ}_q\tilde{σ}_p^\T \int_t^1 Q(s)^\T \dd s \\
				&\quad  + \int_t^1 Q(s)\tilde{σ}_p\tilde{σ}_p^\T Q(s)^\T \dd s \\
				&= (1-t)\tilde{σ}_q\tilde{σ}_q^\T + \scr{O}\left((1-t)^2\right).
			\end{aligned}
		\end{equation*}
		Now observe that $λ_{\max}(M(t)) = λ_{\min}\left(M(t)^{-1}\right)^{-1}$ and $λ_{\min}(M(t)) = λ_{\max}\left(M(t)^{-1}\right)^{-1}$ and thus \ref{ass:AssumptionsAuxiliaryProcess-M} is satisfied when $\tilde{σ}_q\tilde{σ}_q^\T$ is positive definite. We now note that \ref{ass:AssumptionsAuxiliaryProcess-b} is satisfied by continuity assumptions on  $K$, $\Lambda_τ$, and $\nabla \Lambda_τ$.  We also note that  
		\[ L(t)a(t,x)L(t)^\T = σ_qσ_q^\T + Q(t)σ_pσ_q^\T + σ_qσ_pQ(t)^\T + Q(t)σ_pσ_p^\T Q(t)^\T. \]
		Since $Q(t)$ is of order $1-t$, we see that $\tr{L(t)a(t,x)L(t)^\T}$ behaves as $\tr{σ_qσ_q^\T}$ as $t\uparrow 1$ and thus \ref{ass:AsusmptionsAuxiliaryProcess-a} is satisfied as well. Lastly, we note that the map $(t,x)\mapsto L(t)a(t,x)L(t)^\T$ is Lipschitz in space since $σ_qσ_q^\T$, $σ_pσ_q^\T$ and $ σ_pσ_p^\T$ are. Since $\tilde{σ}_q\tilde{σ}_q^\T$, $\tilde{σ}_p\tilde{σ}_q^\T$ and $\tilde{σ}_p\tilde{σ}_p^\T$ are constant, $t\mapsto L(t)\tilde{a}L(t)^\T$ is Lipschitz. By choice of the auxiliary process, $La(t,x) L^\T$ and $L\tilde{a}L^\T$ are equal on the set $\left\{x\colon Lx=\left\{ q_i(1)\right\}_{i=1}^n\right\}$ and thus \ref{ass:AssumptionsAuxiliaryProcess-aatilde} is satisfied through \Cref{cor:ConditionalSDEs-AlternativeAssumtion-aatilde}. 
	\end{proof}
	
	\begin{rem}
		Note that the requirement of $\tilde{σ}_q\tilde{σ}_q^\T$ being positive definite implies that the number of  noise fields $J$ should satisfy $J\geq nd$. Numerical simulations have confirmed that if this assumption is not satisfied, the guided processes used in \cite{arnaudon2019geometric} behave erratically. 
	\end{rem}
	
	\appendix
	
	\section{Proof of theorems \ref{thm:ConditionalSDEs-Verification-BoundennessLikelihood} and \ref{thm:ConditionalSDEs-BoundOnV} }
	\label{app:ProofBoundOnV}

	\textbf{Notation:} For convenience, we denote a subscript $t$ for evaluation of a space-time function in $(t, X_t^\circ)$ throughout this section.

	\subsection{Proof of \Cref{thm:ConditionalSDEs-BoundOnV}}
	\label{app:BoundV}
	
	We start by studying $\scr{A}^\circ V$ and assume that $t_0$ is so that $\log\left(\frac{1}{T-t}\right)>0$ for all $t\in[t_0,T)$. It follows that 
	\begin{equation}
		\label{eq:ConditionalSDEs-AcircVFirstForm}
		\begin{aligned}
			\scr{A}^\circ V(t,x) &= \pder{V}{t}+\scr{L}^\circ V(t,x) \\
			&= -\frac{H(t,x)}{(T-t)\log^2\left(\frac{1}{T-t}\right)} + \log^{-1}\left(\frac{1}{T-t}\right)\scr{A}^\circ H(t,x) \\
			&= \log^{-1}\left(\frac{1}{T-t}\right) \left[ \scr{A}^\circ H(t,x) - \frac{V(t,x)}{T-t} \right].
		\end{aligned}
	\end{equation}
	
	Here, the second equality follows form the product rule and the fact that $\scr{L}^\circ$ only acts on the space-variable $x$. To compute $\scr{A}^\circ H$, we note that by It\^o's formula, 
	\[ \dd H_t = \scr{A}^\circ H_t\dd t + \sigma_t^\T\nabla_x H_t\dd X_t^\circ \] 
	
	Hence, it follows from \eqref{eq:ConditionalSDEs-SDEGuidedProcess} and Lemma 5.3 of \cite{bierkens2018simulation} that 
	
	\begin{equation}
		\label{eq:ConditionalSDEs-AcircH}
		\begin{aligned}
			\scr{A}^\circ H_t &= \left(\tilde{b}_t-{b}_t\right)^\T L_Δ(t)^\T M_Δ(t)ζ_{Δ,t}\\
			&\quad +  \frac12 ζ_{Δ,t}^\T M_Δ(t)L_Δ(t)\left(\tilde{a}(t)-{a}_t \right)L_Δ(t)^\T M_Δ(t)ζ_{Δ,t} \\
			&\quad + \frac{1}{2}\tr{a_tL_Δ(t)^\T M_Δ(t)L_Δ(t)}  
			\\ &\quad -\frac{1}{2} ζ_{Δ,t}^\T M_Δ(t)L_Δ(t)a_tL_Δ(t)^\T M_Δ(t)ζ_{Δ,t}.
		\end{aligned}
	\end{equation}
	
	Now in order to upper bound $\limsup_{t\uparrow T}V(t, X_t^\circ)$, we start by upper bounding $\scr{A}^\circ H$ by applying the assumptions stated in \Cref{ass:AssumptionsAuxiliaryProcess} to each of the terms of \eqref{eq:ConditionalSDEs-AcircH}. First, we note that it is trivial to verify that ${H}(t,x) = \frac{1}{2}ζ_Δ(t,x)^\T M_Δ(t)ζ_Δ(t,x)$ and therefore, it follows from a standard quadratic forms inequality that 
	\[ \frac{1}{2}λ_{\min}(M_Δ(t)) \abs{ζ_{Δ}(t,x)}^2 \leq \frac{1}{2}ζ_{Δ}(t,x)^\T M_Δ(t) ζ_{Δ}(t,x) \leq \frac{1}{2}λ_{\max}(M_Δ(t)) \abs{ζ_{Δ}(t,x)}^2, \]
	and thus, by Assumption \ref{ass:AssumptionsAuxiliaryProcess-M}, we have the relation 
	\begin{equation}
		\label{eq:ConditionalSDEs-AbsoluteZeta}
		\sqrt{\frac{2(T-t)H(t,x)}{\overline{c}}} \leq \sqrt{ \frac{2 H(t,x)}{λ_{\max}(M_Δ(t))} } \leq \abs{ζ_{Δ}(t,x)} \leq \sqrt{ \frac{2H(t,x)}{λ_{\min}(M_Δ(t))}} \leq \sqrt{ \frac{2(T-t)H(t,x)}{\underline{c}}}.
	\end{equation}
	
	For the first term of \eqref{eq:ConditionalSDEs-AcircH}, it follows from the Cauchy-Schwarz inequality, assumption \ref{ass:AssumptionsAuxiliaryProcess-b} and \eqref{eq:ConditionalSDEs-AbsoluteZeta} that 
	\begin{equation}
		\label{eq:ConditionalSDEs-StudyAcircH-b}
		\begin{aligned}
			\abs{ \left(\tilde{b}_t-b_t\right)^\T L_Δ(t)^\T M_Δ(t) ζ_{Δ,t} }  &\leq λ_{\max}(M_Δ(t))\abs{ L_Δ(t)\left( \tilde{b}_t-b_t\right) } \abs{ζ_{Δ,t}} \\
			& \leq \overline{c}(T-t)^{-1}c_1\sqrt{\frac{2(T-t)H_t}{\underline{c}}} \\
			&= γ_1 (T-t)^{-1/2}H_t^{1/2} , 
		\end{aligned}
	\end{equation}
	where $γ_1 = \overline{c}c_1\sqrt{2/\underline{c}}$. \\
	
	For the second term, we first note that 
	\begin{gather*}
		\abs{	\frac{1}{2}ζ_{Δ,t}^\T M_Δ(t)L_Δ(t)\left( \tilde{a}(t)-a_t\right)L_Δ(t)^\T M_Δ(t) ζ_{Δ,t} } \\
		\leq \frac{1}{2} ζ_{Δ,t}^\T M_Δ(t)^2 ζ_{Δ,t} \norm{L_Δ(t)\left(\tilde{a}(t)-a_t\right)L_Δ(t)^\T }.
	\end{gather*}
	
	Since $M_Δ(t)$ is symmetric and strictly positive definite, its matrix square root exists and thus $\frac{1}{2} ζ_Δ(t,x)^\T M_Δ(t)^2 ζ_Δ(t,x) \leq λ_{\max}(M_Δ(t)) H(t,x)$.  We now apply Assumption \ref{ass:AssumptionsAuxiliaryProcess-aatilde} to upper bound the absolute value of the second term of \eqref{eq:ConditionalSDEs-AcircH} by 
	\begin{equation}
		\label{eq:ConditionalSDEs-StudyAcircH-aatilde}
		\begin{aligned}
			λ_{\max}(M_Δ(t))H_t\left[ c_3\abs{ζ_{Δ,t}} + θ_t\right]  
			\leq \overline{c}(T-t)^{-1}H_t\left[ c_3\sqrt{ \frac{2(T-t)H_t}{\underline{c}}} + θ_t\right] \\
			\leq γ_2(T-t)^{-1/2}H_t^{3/2} + γ_3(T-t)^{-1+α}H_t ,
		\end{aligned}
	\end{equation}
	where $γ_2 = \overline{c}c_3\sqrt{2/\underline{c}}$ and $γ_3 = \overline{c}c_4$. \\
	
	For the third term of \eqref{eq:ConditionalSDEs-AcircH}, we note that, for positive definite $m\times m$ matrices $A$ and $C$, there is a relation $\abs{\tr{AC}}\leq\tr{A}\tr{C}\leq m\lambda_{\max}(A)\tr{C}$. It thus follows from assumption \ref{ass:AsusmptionsAuxiliaryProcess-a} that 
	\begin{equation}
		\label{eq:ConditionalSDEs-StudyAcircH-a}
		\begin{aligned}
			\abs{\frac{1}{2}\tr{ a_tL_Δ(t)^\T M_Δ(t)L_Δ(t) }} &\leq %\abs{ \frac{1}{2} \tr{M_Δ(t) L_Δ(t)a(t,x)L_Δ(t)^\T} } \\
			\frac{1}{2} mλ_{\max}(M_Δ(t)) \tr{L_Δ(t)a_tL_Δ(t)^\T} \\
			&\leq γ_4(T-t)^{-1},
		\end{aligned}
	\end{equation} 
	where $γ_4 = m\overline{c}c_3/2$. \\
	
	Now by combining \eqref{eq:ConditionalSDEs-AcircH} with \eqref{eq:ConditionalSDEs-StudyAcircH-b}, \eqref{eq:ConditionalSDEs-StudyAcircH-aatilde} and \eqref{eq:ConditionalSDEs-StudyAcircH-a}, we deduce that 
	\begin{equation}
		\label{eq:ConditionalSDEs-FirstIneqAcircH}
		\begin{aligned}
			\abs{\scr{A}^\circ H_t} &\leq  γ_1(T-t)^{-1/2}H_t^{1/2} \\
			&\quad + γ_2(T-t)^{-1/2}H_t^{3/2} + γ_3(T-t)^{-1}θ_tH_t\\
			&\quad + γ_4(T-t)^{-1} \\
			&\quad -\frac{1}{2} ζ_{Δ,t}^\T M_Δ(t)L_Δ(t)a_tL_Δ(t)^\T M_Δ(t)ζ_{Δ,t}.
		\end{aligned}
	\end{equation}
	For $t\geq t_0$ we can go back to equation \eqref{eq:ConditionalSDEs-AcircVFirstForm} substitute $V_t = \log^{-1}\left(\frac{1}{T-t}\right) H_t$ in these equations to find that
	\begin{equation*}
		\begin{aligned}
			\scr{A}^\circ V_t &\leq \ell_0(t) + \ell_1(t)V_t^{1/2}+\ell_2(t)V_t + \ell_3(t)V_t^{3/2}\\
			&\quad 	- \frac{1}{2} ζ_{Δ,t}^\T M_Δ(t)L_Δ(t)a_tL_Δ(t)^\T M_Δ(t)ζ_{Δ,t},
		\end{aligned}
	\end{equation*}
	where 
	\begin{equation}
		\label{eq:ConditionalSDEs-defsell}
		\begin{aligned}
			\ell_0(t) &= \frac{γ_4}{(T-t)\log\left(\frac{1}{T-t}\right)} \\
			\ell_1(t) &= \frac{γ_1}{\sqrt{(T-t)\log\left(\frac{1}{T-t}\right)}} \\
			\ell_2(t) &= \frac{γ_3}{(T-t)^{1-α}} - \frac{1}{(T-t)\log\left(\frac{1}{T-t}\right)}\\
			\ell_3(t) &= γ_2\sqrt{\frac{\log\left(\frac{1}{T-t}\right)}{T-t}}.
		\end{aligned}
	\end{equation}
	
	Now recall that a martingale $M^V$ exists so that 
	\[V_t = V_{t_0} + M_t^V +\int_{t_0}^t \scr{A}^\circ V_s\dd .s \]
	It is a consequence of It\^o's formula that 
	\[ [M^V]_t = ζ_{Δ,t}^\T M_Δ(t)L_Δ(t)a_tL_Δ(t)^\T M_Δ(t)ζ_{Δ,t}. \]
	Hence, 
	\begin{equation}
		\label{eq:ConditionalSDEs-VIntegrated}
		\begin{aligned}
			V_t &\leq V_{t_0} + M_t^V - \frac{1}{2}[M^V]_t + \int_{t_0}^t 	\ell_0(s)\dd s + \int_{t_0}^t \ell_1(s)V_s^{1/2}\dd s \\
			&\quad + \int_{t_0}^t \ell_2(s)V_s\dd s + \int_{t_0}^t \ell_3(s)V_s^{3/2}\dd s
		\end{aligned}
	\end{equation}
	
	Now by \Cref{lem:ExponentialMartingaleBound}, we have an almost surely finite random variable $C$ so that $M_t^V - \frac{1}{2}[M^V]_t  \leq C\log\left(\frac{1}{T-t}\right)$. Moreover, by \Cref{lem:Gronwall}, we now have
	\begin{equation*}
		V_t \leq 4\left[ \frac{4 \exp\left\{ -\frac12 \int_{t_0}^t \ell_2(s)\dd s \right\}}{ 2\sqrt{ V\left(t_0, X_{t_0}^\circ\right) + C\log\left(\frac{1}{T-t}\right) + \int_{t_0}^t \ell_0(s)\dd s } + \int_{t_0}^t \ell_1(s)\dd s } - \int_{t_0}^t \ell_3(s)\dd s \right]^{-2}.
	\end{equation*}
	Direct computations show that 
	\begin{equation}
		\label{eq:ConditionalSDEs-Integralsell}
		\begin{aligned}
			κ_0(t) := \int_{t_0}^t \ell_0(s) \dd s & =γ_4 \log \frac{  \log\left(\frac{1}{T-t}\right)}{\log\left(\frac{1}{T-t_0}\right)}   \\
			κ_1(t) := \int_{t_0}^t \ell_1(s) \dd s &= γ_1 \int_{\log\left(\frac{1}{T-t_0}\right)}^{\log\left(\frac{1}{T-t}\right) } u^{-1/2}e^{-u/2} \dd u \\
			κ_2(t) := \int_{t_0}^t \ell_2(s) \dd s &= γ_3\frac{(T-t_0)^α - (T-t)^α}{α} - \log \frac{ \log\left(\frac{1}{T-t}\right)}{\log\left(\frac{1}{T-t_0}\right)}  \\
			κ_3(t) := \int_{t_0}^t \ell_3(s) \dd s &= γ_2 \int_{\log\left(\frac{1}{T-t_0}\right)}^{\log\left(\frac{1}{T-t}\right)} u^{1/2}e^{-u/2}\dd u .
		\end{aligned}
	\end{equation}
	
	Now note that $κ_1$ and $κ_3$ are bounded in the limit $t\uparrow T$. A substitution of the results in \eqref{eq:ConditionalSDEs-Integralsell} and some direct computations now yields
	
	\begin{equation*}
		V_t\leq 4\left[ \frac{4\log^{-1/2}\left(\frac{1}{T-t_0}\right)\exp\left\{ - \frac12 γ_3 \frac{(T-t_0)^α-(T-t)^α}{α} \right\}}{ \sqrt{ \frac{ V\left(t_0, X_{t_0}^\circ\right) + C\log\left(\frac{1}{T-t}\right) + γ_4\log\frac{ \log\left(\frac{1}{T-t}\right) }{\log\left(\frac{1}{T-t_0}\right) } }{\log\left(\frac{1}{T-t}\right) } } - \frac{κ_1(t)}{\sqrt{\log\left(\frac{1}{T-t}\right)}} } - κ_3(t)  \right]^{-2} ,
	\end{equation*}
	which is almost surely bounded in the limit $t\uparrow T$. 
	
	\subsection{Proof of \Cref{thm:ConditionalSDEs-Verification-BoundennessLikelihood}}
	\label{app:BoundPsi}
	To determine the form of $Ψ$, we note that direct computations show that 
	
	\begin{equation}
		\label{eq:ConditionalSDEs-pderHt}
		\pder{H}{t}(t,x) 
		= \tilde{b}(t,x)^\T L_Δ(t)^\T M_Δ(t) ζ_Δ(t,x) + \frac{1}{2} ζ_Δ(t,x)^\T M_Δ(t)L_Δ(t)\tilde{a}(t)L_Δ(t)^\T M_Δ(t)ζ_Δ(t,x).
	\end{equation} 
	Moreover,
	\begin{equation}
		\label{eq:DerivationsofHwDelta}
		\nabla_xH(t,x) = -L_Δ(t)^\T M_Δ(t) ζ_Δ(t,x) \qquad\text{and}\qquad \mathrm{Hess}_H(t,x) = L_Δ(t)^\T M_Δ(t)L_Δ(t) .
	\end{equation}
	
	Using the formulas derived in \eqref{eq:ConditionalSDEs-pderHt} and \eqref{eq:DerivationsofHwDelta} for $\pder{H}{t}$, $\nabla_x H$ and $\mathrm{Hess}_H$ and the relation $\nabla_x\tilde{h}(t,x) = -\tilde{h}(t,x)\nabla_x H(t,x)$, it can be derived that 
	
	\begin{equation}
		\label{eq:ConditionalSDEs-llikelihood}
		\begin{aligned}
			\frac{\scr{A}\tilde{h}}{\tilde{h}}(s,x) &= \left( b(s,x)-\tilde{b}(s,x)\right)^\T L_Δ(s)^\T 	M_Δ(s)ζ_Δ(s,x) \\
			&\quad + \frac{1}{2} ζ_Δ(s,x)^\T M_Δ(s)L_Δ(s)\left( a(s,x)-\tilde{a}(s) \right) L_Δ(s)^\T 	M_Δ(s) ζ_Δ(s,x) \\
			&\quad - \tr{ \left(a(s,x)-\tilde{a}(s)\right)L_Δ(s)^\T M_Δ(s)L_Δ(s) }.
		\end{aligned}
	\end{equation}
	We now apply the upper bounds derived in \Cref{app:BoundV} for each of the terms. First note that under $\{ \sup_{t_0\leq s <t} V(t,X_t^\circ) \leq k \}$, we have that $H(s, X_s^\circ)\leq k\log\left(\frac{1}{T-s}\right)\leq k\log\left(\frac{1}{T-t}\right)$ for all $s\in [t_0,t)$. It now follows from Equation \eqref{eq:ConditionalSDEs-StudyAcircH-b} that the absolute value of first term of \eqref{eq:ConditionalSDEs-llikelihood} can be upper bounded by 
	\[ γ_1\sqrt{k\frac{\log\left(\frac{1}{T-t}\right)}{T-t}} ,\]
	which is integrable over $[t_0,T]$ (see the last line of \eqref{eq:ConditionalSDEs-Integralsell}). Using a similar approach, we can also combine \eqref{eq:ConditionalSDEs-StudyAcircH-aatilde} and \eqref{eq:ConditionalSDEs-Integralsell} to see that the absolute value of the second term of \eqref{eq:ConditionalSDEs-llikelihood} is integrable.  Similarly, the relation $\abs{\tr{AC}}\leq\tr{A}\tr{C}\leq m\lambda_{\max}(A)\tr{C}$ for $m\times m$ matrices $A$ and $C$, in combination with Assumption \ref{ass:AssumptionsAuxiliaryProcess-aatilde} and the known integrals in \eqref{eq:ConditionalSDEs-Integralsell} yields that the final term is integrable. 
	
	\section{Additional lemmas}
	\label{app:AdditionalLemmas}
	In this section,we discuss some lemmas that were used in various proofs throughout the paper.

		\begin{lem}
			\label{lem:FractionLimit}
			Suppose \ref{ass:LimitForF} and \ref{ass:TransitionDensityBound} hold, let $s<T$ and let $g_s$ be a bounded $\scr{F}_s$-measurable function. 
			\[ \lim_{t\uparrow T}\bb{E}_t^\star \left( g_s(X)κ(t)\frac{\tilde{h}(t, X_t)}{h(t, X_t)}\right) = \bb{E}_s^\star g_s(X). \]
			% \note{Not sure yet where to put this lemma}
		\end{lem}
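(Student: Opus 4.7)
The plan is to reduce the expectation at time $t$ back to an expectation at time $s$ via the tower property and the $h$-transform structure, and then pass to the limit by dominated convergence. First, since $g_s(X)$ is $\scr{F}_s$-measurable and $s<t$, the tower property gives
\[ \bb{E}^\star_t\!\left(g_s(X)\,\kappa(t)\,\frac{\tilde h(t,X_t)}{h(t,X_t)}\right) = \bb{E}^\star_s\!\left(g_s(X)\,\bb{E}^\star_t\!\left(\kappa(t)\frac{\tilde h(t,X_t)}{h(t,X_t)}\,\Big|\,\scr{F}_s\right)\right). \]

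To evaluate the inner conditional expectation, I would use that in the setting of Section~\ref{sec:ConditionsAndProof} the function $h$ is space-time harmonic, so $D_t^h = h(t,X_t)/h(0,x_0)$ and the conditional Radon--Nikodym density of $\bb{P}^\star$ with respect to $\bb{P}$ given $\scr{F}_s$ equals $h(t,X_t)/h(s,X_s)$. Combining this with the Markov property of $X$ under $\bb{P}$ and the transition density $p$ supplied by \ref{ass:LimitForF}, the $h(t,X_t)$ factors cancel and one obtains
\[ \bb{E}^\star_t\!\left(\kappa(t)\frac{\tilde h(t,X_t)}{h(t,X_t)}\,\Big|\,\scr{F}_s\right) = \frac{1}{h(s,X_s)}\int \kappa(t)\,\tilde h(t,y)\,p(s,X_s;t,y)\,\dd\nu(y) = \frac{h^\sharp(t;s,X_s)}{h(s,X_s)}, \]
directly from the definition \eqref{eq:F}.

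It then remains to show that $\bb{E}^\star_s\!\left(g_s(X)\,h^\sharp(t;s,X_s)/h(s,X_s)\right) \to \bb{E}^\star_s(g_s(X))$ as $t\uparrow T$. Assumption \ref{ass:LimitForF} gives pointwise (a.s.)\ convergence $h^\sharp(t;s,X_s)/h(s,X_s) \to 1$; $g_s$ is bounded by hypothesis; and $h(s,X_s) > 0$ a.s.\ because $h$ is positive. The main obstacle is extracting from \ref{ass:TransitionDensityBound} a dominator that is uniform in $t$ on some interval $[T-\delta,T)$, so that dominated convergence applies. I would handle this by combining the $\bb{P}$-a.s.\ boundedness in \ref{ass:TransitionDensityBound} (which transfers to $\bb{P}^\star$ via equivalence on $\scr{F}_s$) with the pointwise convergence to the finite limit $h(s,X_s)$, which together force $\sup_{t\in[T-\delta,T)} h^\sharp(t;s,X_s)$ to be a.s.\ finite and thus provide the dominator needed to conclude.
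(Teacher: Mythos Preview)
Your approach is essentially the paper's, only with the operations reordered: the paper first changes from $\bb{P}^\star$ to $\bb{P}$ via $D_t^h=h(t,X_t)/h(0,x_0)$, then applies the tower property under $\bb{P}$ to obtain $\bb{E}_s\bigl(g_s(X)\,h^\sharp(t;s,X_s)/h(0,x_0)\bigr)$, passes to the limit, and finally changes back to $\bb{P}^\star$. You instead tower under $\bb{P}^\star$ first and compute the inner conditional expectation via Bayes' formula; this gives the same intermediate quantity $\bb{E}^\star_s\bigl(g_s(X)\,h^\sharp(t;s,X_s)/h(s,X_s)\bigr)$, so the two routes are equivalent.

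There is, however, a gap in your final paragraph. You argue that the pointwise bounds from \ref{ass:TransitionDensityBound} together with convergence to a finite limit force $\sup_{t} h^\sharp(t;s,X_s)$ to be a.s.\ \emph{finite}, and that this ``provides the dominator''. Almost-sure finiteness is not enough for dominated convergence; you need integrability of the dominator. The clean way out is exactly what the paper does: undo the $h$-transform before taking the limit, so that the quantity to control becomes $g_s(X)\,h^\sharp(t;s,X_s)/h(0,x_0)$ under $\bb{P}$, and then \ref{ass:TransitionDensityBound} (read as giving a bound on $h^\sharp(t;s,X_s)$) yields a bounded, hence $\bb{P}$-integrable, dominator directly. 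Equivalently, if you prefer to stay under $\bb{P}^\star$, observe that a bound $h^\sharp(t;s,X_s)\le C$ gives the dominator $C\|g_s\|_\infty/h(s,X_s)$, and $\bb{E}^\star_s\bigl(1/h(s,X_s)\bigr)=\bb{E}_s\bigl(1/h(0,x_0)\bigr)=1/h(0,x_0)<\infty$, so this dominator is $\bb{P}^\star$-integrable. Either route closes the argument; your ``a.s.\ finite'' claim does not.
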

		
		\begin{proof}
			By  \eqref{eq:DpstarDP}, upon noting $\scr{A}h=0$,
			\[  
			\begin{aligned}
				\bb{E}_t^\star &\left( g_s(X) κ(t) \frac{\tilde{h}(t, X_t)}{h(t, X_t)} \right) = \bb{E}_t \left(g_s(X) κ(t) \frac{\tilde{h}(t, X_t)}{h(0, x_0)} \right) \\
				&= \bb{E}_{s} \left( \frac{g_s(X)}{h(0,x_0)}\bb{E}_t \left( κ(t)\tilde{h}(t, X_t) \mid \scr{F}_{s}\right) \right) = \bb{E}_{s} \left( \frac{g_s(X)}{h(0,x_0)}h^\sharp(t;s,X_s)\right)
			\end{aligned}\]
			We now take $\lim_{t\uparrow T}$. Note that it follows from \ref{ass:TransitionDensityBound}, dominated convergence and \ref{ass:LimitForF} that 
			\[ \lim_{t\uparrow T}\bb{E}_s \left( \frac{g_s(X)}{h(0,x_0)}h^\sharp(t;s,X_s)\right) = \bb{E}_{s} \left(g_s(X)\frac{h(s,X_{s})}{h(0,x_0)} \right) \]
			The result now follows upon changing measures using \eqref{eq:DpstarDP} again with $\scr{A}h=0$.
		\end{proof}
	
	\begin{lem}[Exponential martingale bound]
		\label{lem:ExponentialMartingaleBound}
		Suppose $\{ N_t\}_{t\geq t_0}$ is a martingale. Then an almost surely finite random variable $C$ exists so that  $N_t -\frac{1}{2}[N]_t \leq C\log\left(\frac{1}{T-t}\right)$. 
	\end{lem}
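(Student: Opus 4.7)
The plan is to deduce the bound via the standard exponential-martingale trick. Concretely, I would form the stochastic exponential $Z_t = \exp\bigl(N_t - N_{t_0} - \tfrac12([N]_t - [N]_{t_0})\bigr)$ and verify by It\^o's formula, applied to $x\mapsto e^x$ composed with the continuous semimartingale $N - \tfrac12[N]$, that $Z$ satisfies $dZ_t = Z_t\, dN_t$. In particular $Z$ is a non-negative local martingale starting at $Z_{t_0}=1$, hence a supermartingale with $\bb{E} Z_t \leq 1$ for every $t \in [t_0,T)$. (The argument here uses continuity of $N$; in the intended application $N = M^V$ is an It\^o martingale, so this is automatic.)

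Next I would apply Doob's $L^1$ maximal inequality for non-negative supermartingales to $Z$ on the index set $[t_0,T)$ to obtain $\bb{P}\bigl(\sup_{t_0 \leq s < T} Z_s \geq \lambda\bigr) \leq 1/\lambda$ for every $\lambda > 0$. Letting $\lambda \uparrow \infty$ shows that $\sup_{t_0\leq s < T} Z_s$ is almost surely finite, and passing to logarithms gives the almost sure finiteness of $\bar C := \sup_{t_0 \leq s < T}\bigl(N_s - \tfrac12[N]_s\bigr)$.

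To finish, I would note that $t \mapsto \log(1/(T-t))$ is nondecreasing on $[t_0,T)$ and bounded below by $\log(1/(T-t_0)) > 0$ (as is standing throughout Appendix~A, we may assume $T - t_0 < 1$). Taking $C := \bar C^+ / \log(1/(T-t_0))$ yields an almost surely finite nonnegative random variable satisfying $N_t - \tfrac12[N]_t \leq \bar C \leq C\log(1/(T-t))$ for every $t \in [t_0,T)$, which is the claimed bound.

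I do not expect a genuine obstacle; this is classical. The only thing worth flagging is that the inequality in the lemma is actually weaker than what the argument produces: Doob's inequality delivers a uniform almost sure bound on $\sup_t\bigl(N_t - \tfrac12[N]_t\bigr)$, and the extra factor $\log(1/(T-t))$ is slack. The logarithmic form is convenient, however, for the application in Theorem~\ref{thm:ConditionalSDEs-BoundOnV}, where this factor is ultimately cancelled by the prefactor $\log^{-1}(1/(T-t))$ appearing in the Lyapunov function $V$.
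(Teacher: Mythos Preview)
Your proposal is correct and, in fact, more direct than the paper's own argument. Both proofs start from the same idea: the stochastic exponential $Z_t=\exp\bigl(N_t-\tfrac12[N]_t\bigr)$ is a nonnegative supermartingale, so Doob's maximal inequality controls its running supremum. You then apply the maximal inequality once on the whole interval $[t_0,T)$, conclude $\sup_{t_0\le s<T}\bigl(N_s-\tfrac12[N]_s\bigr)<\infty$ almost surely, and absorb this constant into the monotone factor $\log\bigl(1/(T-t)\bigr)$. The paper instead slices the interval at the times $t_n=T-1/n$, applies the maximal inequality on each $[t_0,t_{n+1}]$ with thresholds $\lambda_n=2\log n$, invokes Borel--Cantelli, and then compares $\lambda_n$ with $\log\bigl(1/(T-t)\bigr)$ on $[t_n,t_{n+1}]$. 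Your route is shorter and makes explicit what you also observe in your final paragraph: the logarithmic growth is slack, since the supremum is already finite. The paper's sliced argument would only be genuinely needed if one wanted a quantitative rate (and even then it does not produce one beyond the constant $2$). Your caveat about continuity of $N$ is appropriate and harmless for the intended application to $M^V$.
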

	\begin{proof}
		First note that, for any fixed $t<T$ and positive $λ$, Doob's maximal inequality yields 
		\[ \bb{P}\left( \sup_{t_0 \leq s\leq t} \exp\left\{ N_s - \frac{1}{2}[N]_s \right\} > e^λ \right) \leq e^{-λ}\bb{E}\left( \exp\left\{ N_t-\frac{1}{2}[N]_t \right\} \right) = k_0e^{-λ} ,\]
		where $k_0 = \exp\left\{ N_{t_0}-\frac{1}{2}[N]_{t_0} \right\}$ is a bounded random variable. It follows that 
		\[ \bb{P}\left( \sup_{0\leq s\leq t} \left\{ N_s - \frac{1}{2}[N]_s \right\} > λ \right) \leq k_0e^{-λ} .\]
		Now set $t_n = T-\frac{1}{n}$ and assume $n$ is sufficiently large so that $t_n>0$. It follows from the preceding that we can choose an arbitrary positive sequence $\{ λ_n \}_n$ and have that 
		\[ \bb{P}\left( \sup_{0\leq s\leq t_{n+1}} \left\{ N_s - \frac{1}{2}[N]_s \right\} > λ_n \right) \leq k_0e^{-λ_n}. \]
		Upon choosing $λ_n = 2\log n$, one has $\sum_n e^{-λ_n} < \infty$ and thus, by the Borel-Cantelli lemma
		\[ \bb{P} \left( \limsup_{n\to\infty} \left\{ \sup_{0\leq s\leq t_{n+1}} \left\{ N_s - \frac{1}{2}[N]_s \right\} > λ_n \right\} \right) =0.  \]
		We can thus almost surely find a random variable $n_0(ω)$ so that for all $n\geq n_0(ω)$
		\[ \sup_{0\leq s\leq t_{n+1}} \left\{ N_s - \frac{1}{2}[N]_s \right\} \leq λ_n \]
		Now notice that for any $t\in [t_n, t_{n+1}]$, one has $λ_n\leq 2\log\left(\frac{1}{T-t}\right)$ and therefore
		\begin{align*}
			\sup_{0\leq t <T} \frac{N_t-\frac{1}{2}[N]_t}{\log\left(\frac{1}{T-t}\right)} &= \sup_n \sup_{t_n \leq t \leq t_{n+1}} \frac{N_t-\frac{1}{2}[N]_t}{\log\left(\frac{1}{T-t}\right)} \\
			&\leq \sup_{0\leq t\leq t_{n_0}} \frac{N_t-\frac{1}{2}[N]_t}{\log\left(\frac{1}{T-t}\right)}  \vee \sup_{n\geq n_0} \sup_{t_n \leq t\leq t_{n+1}} \frac{λ_n}{\log\left(\frac{1}{T-t}\right)} \\
			&\leq C \vee \sup_n \frac{2\log n}{\log\left(\frac{1}{T-t_{n+1}}\right)} \\
			&= C \vee 2\sup_n \frac{\log n}{\log(n+1)} \\
			&\leq C \vee 2,
		\end{align*}
		where $C$ is a random variable depending on $n_0$, which is finite since $t_{n_0}$ is almost surely bounded away from $T$. 
	\end{proof}
	
	\begin{lem}[Application of  Theorem 2.1 of \cite{agarwal2005}]
		\label{lem:Gronwall}
		Suppose $V$ satisfies \eqref{eq:ConditionalSDEs-VIntegrated} with $\ell_0, \ell_1, \ell_2$ and $\ell_3$ as in \eqref{eq:ConditionalSDEs-defsell}. Then
		
		\begin{equation*}
			V(t, X_t^\circ) \leq 4\left[ \frac{4 \exp\left\{-\frac12 \int_{t_0}^t \ell_2(s)\dd s \right\}}{ 2\sqrt{ V\left(t_0, X_{t_0}^\circ\right) + C\log\left(\frac{1}{T-t}\right) + \int_{t_0}^t \ell_0(s)\dd s } + \int_{t_0}^t \ell_1(s)\dd s } - \int_{t_0}^t \ell_3(s)\dd s \right]^{-2}.
		\end{equation*}
	\end{lem}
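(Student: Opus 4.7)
The plan is a direct application of Theorem 2.1 of \cite{agarwal2005}, a nonlinear Gronwall--Bihari inequality, preceded by one preparatory step that reorganises \eqref{eq:ConditionalSDEs-VIntegrated} into the input format handled by that theorem.

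First, I would invoke \Cref{lem:ExponentialMartingaleBound} to absorb the local-martingale contribution: $M_t^V - \tfrac{1}{2}[M^V]_t \leq C\log(1/(T-t))$ for an almost surely finite random variable $C$, so that \eqref{eq:ConditionalSDEs-VIntegrated} takes the form
\[
V_t \leq \phi(t) + \int_{t_0}^t \ell_1(s) V_s^{1/2}\dd s + \int_{t_0}^t \ell_2(s) V_s\dd s + \int_{t_0}^t \ell_3(s) V_s^{3/2}\dd s,
\]
where $\phi(t) := V(t_0, X_{t_0}^\circ) + C\log(1/(T-t)) + \int_{t_0}^t \ell_0(s)\dd s$ is positive and nondecreasing on $[t_0,T)$. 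The $\log$ term is kept inside $\phi$ rather than in an integral precisely so that the resulting majorant depends on $C\log(1/(T-t))$ only through $\sqrt{\phi(t)}$, matching the form displayed in the lemma.

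Next I would apply Theorem 2.1 of \cite{agarwal2005}. Its mechanism, specialised to the three mixed half-integer powers in $V$, is the Bihari substitution $u = \sqrt V$, which collapses the inequality to a linear-plus-quadratic Riccati--Bihari inequality with data $B(t) = 2\sqrt{\phi(t)} + \int_{t_0}^t \ell_1(s)\dd s$ (the $\ell_1$-term merges with $\sqrt{\phi}$). The linear $\ell_2$-term is then eliminated by the integrating factor $E(t) = \exp(-\tfrac12\int_{t_0}^t \ell_2(s)\dd s)$, and the remaining quadratic $\ell_3$-term is resolved by the standard reciprocal substitution $z = 1/u$, which produces the Riccati blow-up denominator with $Q(t) = \int_{t_0}^t \ell_3(s)\dd s$. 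The resulting explicit majorant is $u(t) \leq 2\bigl[4E(t)/B(t) - Q(t)\bigr]^{-1}$ on the subinterval of $[t_0,T)$ where the bracket is positive, and squaring gives the stated bound for $V_t$.

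The main delicate point, which is exactly what Agarwal's theorem internalises, is the reduction from the three-power inequality in $V$ to a clean Bihari inequality for $u=\sqrt V$: this reduction depends on $\phi$ being nondecreasing and nonnegative (the reason Lemma \ref{lem:ExponentialMartingaleBound} is applied first rather than carrying $M_t^V - \tfrac12[M^V]_t$ through the argument) and on keeping track of the $\ell_2$-contribution through an integrating factor rather than a pointwise bound, so that the closed-form majorant is valid regardless of the sign of $\ell_2$. Strict positivity of the denominator $4E/B - Q$ on $[t_0,T)$, which is needed to make the bound informative in the limit $t\uparrow T$, is not part of the lemma itself and is verified downstream using the explicit evaluations of the $\kappa_i$ in \eqref{eq:ConditionalSDEs-Integralsell} during the proof of \Cref{thm:ConditionalSDEs-BoundOnV}.
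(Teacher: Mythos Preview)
Your proposal is correct and follows essentially the same approach as the paper: first invoke \Cref{lem:ExponentialMartingaleBound} to replace $M_t^V-\tfrac12[M^V]_t$ by $C\log(1/(T-t))$, then apply Theorem~2.1 of \cite{agarwal2005} with the nondecreasing forcing term $a(t)=V(t_0,X_{t_0}^\circ)+C\log(1/(T-t))+\int_{t_0}^t\ell_0(s)\dd s$ and nonlinearities $w_i(u)=u^{i/2}$ for $i=1,2,3$. Your additional discussion of the Bihari mechanism behind Agarwal's result is accurate but goes beyond what the paper records; the paper simply lists the parameter choices $u_1=0$, $u_2=1$, $u_3=4$ without unpacking the substitution.
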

	
	\begin{proof}
		By \Cref{lem:ExponentialMartingaleBound}, we have an almost surely finite random variable $C$ so that $M^V_t - \frac{1}{2}[M^V]_t\leq C\log\left(\frac{1}{T-t}\right)$. The lemma is an application of Theorem 2.1 of \cite{agarwal2005} with, in their notation, $a(t) = V\left(t_0, X^\circ_{t_0}\right)+C\log\left(\frac{1}{T-t}\right)+\int_{t_0}^t \ell_0(s)\dd s$, $f_i(t,s)=\ell_i(s)$ and $b_i(t)=t$ for $i=1,2,3$, $w_1(u)=\sqrt{u}$, $w_2(u)=u$, $w_3(u) = u\sqrt{u}$. To achieve the result, we choose $u_1=0$, $u_2= 1$ and $u_3= 4$. \\
	\end{proof}
	
	\begin{lem}
		\label{lem:supermartingale}
	Suppose $M$ is a local martingale bounded from below with $\bb{E}M_0<\infty$ , then $M$ is a supermartingale. 
\end{lem}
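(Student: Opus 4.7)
The plan is to reduce to the nonnegative case and then apply conditional Fatou's lemma along a localizing sequence. Since $M$ is bounded from below, fix a constant $K\geq 0$ with $M_t\geq -K$ for all $t$ (if the bound is by an integrable random variable rather than a constant, one can either add it to $M$ or carry it through the argument; the constant case is what is needed in \Cref{rem:DoobSupermartingaleInequality}). Then $N_t := M_t + K$ is a nonnegative local martingale with $\mathbb{E} N_0 = \mathbb{E} M_0 + K < \infty$, and it suffices to show that $N$ is a supermartingale.

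Let $(\tau_n)_{n\geq 1}$ be a localizing sequence with $\tau_n\uparrow\infty$ almost surely such that $N^{\tau_n}=(N_{t\wedge\tau_n})_t$ is a genuine martingale for each $n$. Then for $0\leq s\leq t$ and $A\in\scr{F}_s$, the martingale property gives
\[
\mathbb{E}\!\left[ N_{t\wedge\tau_n}\mathbf{1}_A \right]=\mathbb{E}\!\left[ N_{s\wedge\tau_n}\mathbf{1}_A \right].
\]
Sending $n\to\infty$ and using $N_{\cdot\wedge\tau_n}\to N_{\cdot}$ almost surely, the right-hand side converges to $\mathbb{E}[N_s\mathbf{1}_A]$ by monotone/dominated convergence (here I would first check integrability of $N_s$ by applying Fatou to $\mathbb{E}N_{s\wedge\tau_n}\leq \mathbb{E} N_0$, giving $\mathbb{E} N_s\leq \mathbb{E} N_0<\infty$). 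On the left-hand side, nonnegativity of $N$ lets me apply Fatou's lemma directly:
\[
\mathbb{E}[N_t\mathbf{1}_A]=\mathbb{E}\!\left[\liminf_{n\to\infty} N_{t\wedge\tau_n}\mathbf{1}_A\right]\leq \liminf_{n\to\infty}\mathbb{E}[N_{t\wedge\tau_n}\mathbf{1}_A].
\]
Combining the two displays yields $\mathbb{E}[N_t\mathbf{1}_A]\leq \mathbb{E}[N_s\mathbf{1}_A]$ for every $A\in\scr{F}_s$, which is the supermartingale inequality $\mathbb{E}[N_t\mid\scr{F}_s]\leq N_s$. Subtracting $K$ transfers the inequality back to $M$, completing the argument.

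There is no real obstacle; the only care point is handling integrability of $M_t$ for $t>0$, which is not assumed directly but follows from applying Fatou with $s=0$ to get $\mathbb{E}M_t^+\leq \mathbb{E}M_0 + K<\infty$, while $\mathbb{E}M_t^-\leq K$ from the lower bound. This ensures the conditional expectations appearing above are well-defined, after which the Fatou step is routine.
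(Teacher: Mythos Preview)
Your argument has a gap at the step where you claim $\mathbb{E}[N_{s\wedge\tau_n}\mathbf{1}_A]\to\mathbb{E}[N_s\mathbf{1}_A]$ ``by monotone/dominated convergence.'' Neither theorem applies: the sequence $n\mapsto N_{s\wedge\tau_n}$ is not monotone (the path of $N$ may go up or down between $\tau_n$ and $\tau_{n+1}$), and you have no integrable dominator---Doob's maximal inequality for the nonnegative martingales $N^{\tau_n}$ gives only a weak-$L^1$ bound on $\sup_{u\le s} N_u$, not an $L^1$ bound. In fact the convergence can fail outright: if $N$ is a \emph{strict} local martingale (e.g.\ the reciprocal of a three-dimensional Bessel process started from $1$), then $\mathbb{E}[N_{s\wedge\tau_n}]=\mathbb{E}[N_0]$ for every $n$ while $\mathbb{E}[N_s]<\mathbb{E}[N_0]$, so with $A=\Omega$ one has $\liminf_n\mathbb{E}[N_{s\wedge\tau_n}\mathbf{1}_A]>\mathbb{E}[N_s\mathbf{1}_A]$. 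Your chain then only yields $\mathbb{E}[N_t\mathbf{1}_A]\le\liminf_n\mathbb{E}[N_{s\wedge\tau_n}\mathbf{1}_A]$, which does not imply $\mathbb{E}[N_t\mathbf{1}_A]\le\mathbb{E}[N_s\mathbf{1}_A]$.

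The fix---and this is precisely what the paper does---is to apply Fatou \emph{conditionally} rather than after integrating against $\mathbf{1}_A$:
\[
\mathbb{E}(N_t\mid\scr{F}_s)=\mathbb{E}\Bigl(\liminf_{n} N_{t\wedge\tau_n}\,\Big|\,\scr{F}_s\Bigr)\le\liminf_{n}\mathbb{E}\bigl(N_{t\wedge\tau_n}\mid\scr{F}_s\bigr)=\liminf_{n} N_{s\wedge\tau_n}=N_s,
\]
where the last equality is pointwise almost-sure convergence, so no integrability is needed on the right. Your preliminary check that $N_s$ is integrable (via Fatou with $s=0$) is correct and is also part of the paper's proof.
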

\begin{proof}
	Without loss of generality, we assume $M$ is bounded from below by $0$. Now let $\{ \tau_n\}_n$ be a sequence of stopping times such that $\tau_n\uparrow \infty$ and $\{ M_{t\wedge\tau_n} \}_t$ is a martingale for all $n$. It follows from Fatou's lemma that 
	\[ \bb{E}\abs{M_t} = \bb{E}M_t = \bb{E}\left(\liminf_{n\to\infty}M_{t\wedge\tau_n}\right) \leq \liminf_{n\to\infty}\bb{E}M_{t\wedge\tau_n} = \bb{E}M_{0}<\infty. \]
	Hence, $M$ is integrable. Moreover, for $s\leq t$, if also follows from Fatou's lemma that 
	\begin{align*}
		\bb{E}\left(M_t\mid\scr{F}_s\right) &= \bb{E}\left(\liminf_{n\to\infty} M_{t\wedge\tau_n} \mid\scr{F}_s\right) \leq \liminf_{n\to\infty} \bb{E}\left( M_{t\wedge\tau_n}\mid \scr{F}_s\right)   = \liminf_{n\to\infty} M_{s\wedge\tau_n} = M_s
	\end{align*}

\end{proof}
	
	\bibliographystyle{authordate1}
	\bibliography{biblio} 

\begin{thebibliography}{}

\bibitem[\protect\citename{Agarwal {\em et~al.}, }2005]{agarwal2005}
Agarwal, Ravi, Deng, Shengfu, \& Zhang, Weinian. 2005.
\newblock Generalization of a retard Gronwall-like inequality and its
  applications.
\newblock {\em Applied Mathematics and Computation}, {\bf 165}(06), 599--612.

\bibitem[\protect\citename{Arnaudon {\em et~al.}, }2019]{arnaudon2019geometric}
Arnaudon, A., Holm, D.~D., \& Sommer, S. 2019.
\newblock A geometric framework for stochastic shape analysis.
\newblock {\em Found. Comput. Math.}, {\bf 16}, 653--701.

\bibitem[\protect\citename{Arnaudon {\em et~al.}, }2021]{arnaudon2021}
Arnaudon, A., van~der Meulen, F.~H., Schauer, M., \& Sommer, S. 2021.
\newblock {\em Diffusion bridges for stochastic Hamiltonian systems and shape
  evolutions}.

\bibitem[\protect\citename{Baudoin, }2002]{baudoin2002}
Baudoin, Fabrice. 2002.
\newblock Conditioned stochastic differential equations: theory, examples and
  application to finance.
\newblock {\em Stochastic Processes and their Applications}, {\bf 100}(1),
  109--145.

\bibitem[\protect\citename{Bierkens {\em et~al.},
  }2020]{bierkens2018simulation}
Bierkens, J., van~der Meulen, F.~H., \& Schauer, M. 2020.
\newblock Simulation of elliptic and hypo-elliptic conditional diffusions.
\newblock {\em Advances in Applied Probability}, {\bf 52}(1), 173--212.

\bibitem[\protect\citename{Bui {\em et~al.}, }2021]{bui2021inference}
Bui, Mai~Ngoc, Pokern, Yvo, \& Dellaportas, Petros. 2021.
\newblock Inference for partially observed Riemannian Ornstein--Uhlenbeck
  diffusions of covariance matrices.
\newblock {\em arXiv preprint arXiv:2104.03193}.

\bibitem[\protect\citename{Clark, }1990]{clark1990}
Clark, J. M.~C. 1990.
\newblock The simulation of pinned diffusions.
\newblock {\em Decision and Control, 1990., Proceedings of the 29th IEEE
  Conference},  1418--1420.

\bibitem[\protect\citename{Delyon \& Hu, }2006]{delyon2006}
Delyon, B., \& Hu, Y. 2006.
\newblock Simulation of conditioned diffusions and application to parameter
  estimation.
\newblock {\em Stochastic Processes and their Applications}, {\bf 116}(11),
  1660--1675.

\bibitem[\protect\citename{Doyle \& Snell, }2000]{doyle2000}
Doyle, Peter~G., \& Snell, J.~Laurie. 2000.
\newblock Random walks and electric networks.

\bibitem[\protect\citename{Jensen \& Sommer, }2021]{jensen2021simulation}
Jensen, Mathias~H{\o}jgaard, \& Sommer, Stefan. 2021.
\newblock Simulation of Conditioned Semimartingales on Riemannian Manifolds.
\newblock {\em arXiv preprint arXiv:2105.13190}.

\bibitem[\protect\citename{Mao, }2008]{mao2008stochastic}
Mao, Xuerong. 2008.
\newblock {\em Stochastic differential equations and applications, second
  edition}.
\newblock Horwood Publishing Chichester, UK.

\bibitem[\protect\citename{Palmowski \& Rolski, }2002]{palmowski2002}
Palmowski, Zbigniew, \& Rolski, Tomasz. 2002.
\newblock A technique for exponential change of measure for Markov processes.
\newblock {\em Bernoulli}, {\bf 8}(6), 767--785.

\bibitem[\protect\citename{Papaspiliopoulos \& Roberts,
  }2012]{papaspiliopoulos2012importance}
Papaspiliopoulos, Omiros, \& Roberts, Gareth. 2012.
\newblock Importance sampling techniques for estimation of diffusion models.
\newblock {\em Statistical methods for stochastic differential equations}, {\bf
  124}, 311--340.

\bibitem[\protect\citename{Protter, }1990]{protter1990}
Protter, Philip. 1990.
\newblock {\em Stochastic Integration and Differential Equations}.
\newblock Stochastic Modelling and Applied Probability.
\newblock Springer Berlin, Heidelberg.

\bibitem[\protect\citename{Rousselle, }2015]{Rousselle2015}
Rousselle, A. 2015.
\newblock Quenched invariance principle for random walks on Delaunay
  triangulations.
\newblock {\em Electronic Journal of Probability}, {\bf 20}(none), 1 -- 32.

\bibitem[\protect\citename{Schauer {\em et~al.}, }2017a]{schauer2017}
Schauer, Moritz, van~der Meulen, Frank, \& van Zanten, Harry. 2017a.
\newblock Guided proposals for simulating multi-dimensional diffusion bridges.
\newblock {\em Bernoulli}, {\bf 23}(4A), 2917--2950.

\bibitem[\protect\citename{van~der Meulen \& Schauer,
  }2018]{meulen2018Bayesian}
van~der Meulen, Frank, \& Schauer, Moritz. 2018.
\newblock Bayesian estimation of incompletely observed diffusions.
\newblock {\em Stochastics}, {\bf 90}(5), 641--662.

\bibitem[\protect\citename{Younes, }2010]{younes2012}
Younes, L. 2010.
\newblock {\em Shapes and Diffeomorphisms}.
\newblock Springer.

\end{thebibliography}
\end{document}